\renewcommand{\geq}{\geqslant}
\renewcommand{\leq}{\leqslant}
\newcommand{\ie}{i.e.,~}
\newcommand{\sse}{\subseteq}
\DeclareMathOperator{\polylog}{polylog}
\newcommand{\abs}[1] {\ensuremath\left|#1\right|}
\newcommand{\N}{\mathbb{N}}
\newcommand{\bigO}{\mathcal{O}}
\newcommand{\gen}[2][]{\langle #2 \rangle}
\newcommand{\ncl}[2][]{\langle\!\langle #2 \rangle\!\rangle}
\newcommand{\@CCLASS}[1]{\hbox{\small$\mathsf{#1}$}}
\newcommand{\NPOLYLOGTIME}{\ensuremath{\@CCLASS{NPOLYLOGTIME}}\xspace}
\newcommand{\DPOLYLOGTIME}{\ensuremath{\@CCLASS{DPOLYLOGTIME}}\xspace}
\newcommand{\SigkPOLYLOGTIME}[1]{\ensuremath{\@CCLASS{\Sigma}_{#1}\@CCLASS{POLYLOGTIME}}\xspace}
\newcommand{\DLOGTIME}{\ensuremath{\@CCLASS{DLOGTIME}}\xspace}
\newcommand{\FO}{\ensuremath{\@CCLASS{FO}}\xspace}
\newcommand{\FOLL}{\ensuremath{\@CCLASS{FOLL}}\xspace}
\newcommand{\AC}{\ensuremath{\@CCLASS{AC}\xspace}}
\newcommand{\ACz}{\ensuremath{\@CCLASS{AC}^0}\xspace}
\newcommand{\qACz}{\ensuremath{\@CCLASS{qAC}^0}\xspace}
\newcommand{\NC}{\ensuremath{\@CCLASS{NC}}\xspace}
\newcommand{\ACC}{\ensuremath{\@CCLASS{ACC}}\xspace}
\newcommand{\qAC}{\ensuremath{\@CCLASS{qAC}}\xspace}
\newcommand{\TC}{\ensuremath{\@CCLASS{TC}}\xspace}
\newcommand{\LOGSPACE}{\ensuremath{\@CCLASS{L}}\xspace}
\newcommand{\NL}{\ensuremath{\@CCLASS{NL}}\xspace}
\newcommand{\SL}{\ensuremath{\@CCLASS{SL}}\xspace}
\newcommand{\PTIME}{\ensuremath{\@CCLASS{P}}\xspace}
\newcommand{\SigkPTIME}[1]{\ensuremath{\@CCLASS{\Sigma}_{#1}\@CCLASS{P}}\xspace}
\newcommand{\NP}{\ensuremath{\@CCLASS{NP}}\xspace}
\newcommand{\GI}{\ensuremath{\@CCLASS{GI}}}
\newcommand{\PSPACE}{\ensuremath{\@CCLASS{PSPACE}}\xspace}
\newcommand{\LH}{\ensuremath{\@CCLASS{LH}}\xspace}
\newcommand{\PLH}{\ensuremath{\@CCLASS{PolyLH}}\xspace}
\newcommand{\DPLL}{\ensuremath{\@CCLASS{DPLL}}\xspace}
\renewcommand{\L}{\LOGSPACE}
\newcommand{\DTISP}{\ensuremath{\@CCLASS{DTISP}}\xspace}
\newcommand{\NTISP}{\ensuremath{\@CCLASS{NTISP}}\xspace}
\newcommand{\DTIME}{\ensuremath{\@CCLASS{DTIME}}\xspace}
\newcommand{\NTIME}{\ensuremath{\@CCLASS{NTIME}}\xspace}
\newcommand{\pv}[1]{\mathbf{#1}}
\newcommand{\pvI}[1]{\llbracket#1\rrbracket}
\newcommand{\pvL}{\mathbb{L}}
\newcommand{\pvM}{\mathbin{\raisebox{.4pt}{\textup{\textcircled{\raisebox{.6pt}{\smaller[2]\textit{m}}}}}}}
\newcommand{\GAb}{\pv{G}_\textup{\textbf{Ab}}}
\newcommand{\Gsol}{\pv{G}_\textup{\textbf{sol}}}
\newcommand*{\gR}[1][]{\mathrel{\mathcal{R}_{#1}}}
\newcommand\nindent{.5pt}
\newcommand\noverline[1]{%
	\kern\nindent\overline{\kern-\nindent#1\kern-\nindent}\kern\nindent}
\newcommand{\normalop}[3]{\operatorname{NC}_{#3}(#1; #2)}
\newcommand{\commop}[3][]{\operatorname{PCC}_{#1}(#2;#3)}
\theoremstyle{plain}
\newtheorem*{rlemma}{Reachability Lemma}
\newtheorem{question}[theorem]{Question}
\def\th@remark{%
  \thm@headfont{%
    \textcolor{lipicsGray}{$\blacktriangleright$}\nobreakspace\sffamily\bfseries}%
  \normalfont 
  \thm@preskip\topsep \divide\thm@preskip\tw@
  \thm@postskip\thm@preskip
}
\theoremstyle{plain}
\providecommand\iitem{}
\providecommand\qitem{}
\newcommand\decproblem@iitem@label{\rlap{Input.}\phantom{Question.}}
\newcommand\decproblem@qitem@label{Question.}
\newenvironment{decproblem}{%
  \begin{description}\begin{samepage}%
  \renewcommand{\iitem}{\item[\decproblem@iitem@label]}%
  \renewcommand{\qitem}{\item[\decproblem@qitem@label]}%
}{%
  \end{samepage}\end{description}%
}
\newcommand{\dMemb}[2][]{\textup{\textsc{memb${}_{\mathbf{#1}}\expandafter\ifx\expandafter\relax\detokenize{#2}\relax\else(#2)\fi$}}}
\newcommand{\dConj}[2][]{\textup{\textsc{conj${}_{\mathbf{#1}}\expandafter\ifx\expandafter\relax\detokenize{#2}\relax\else(#2)\fi$}}}
\newcommand{\dMembS}[2][]{\textup{\textsc{memb${}^{\sharp}_{\mathbf{#1}}\expandafter\ifx\expandafter\relax\detokenize{#2}\relax\else(#2)\fi$}}}
\newcommand{\dConjS}[2][]{\textup{\textsc{conj${}^{\sharp}_{\mathbf{#1}}\expandafter\ifx\expandafter\relax\detokenize{#2}\relax\else(#2)\fi$}}}
\newcommand{\dMGS}[2][]{\textup{\textsc{mgs${}_{\mathbf{#1}}\expandafter\ifx\expandafter\relax\detokenize{#2}\relax\else(#2)\fi$}}}
\newcommand{\dRequiv}[2][]{\textup{\textsc{\ensuremath{\gR}-equiv${}_{\mathbf{#1}}\expandafter\ifx\expandafter\relax\detokenize{#2}\relax\else(#2)\fi$}}}
\newcommand{\dEqn}[2][]{\textup{\textsc{eqn${}_{\mathbf{#1}}\expandafter\ifx\expandafter\relax\detokenize{#2}\relax\else(#2)\fi$}}}
\newcommand{\dEqnSys}[2][]{\textup{\textsc{eqn${}^\ast_{\mathbf{#1}}\expandafter\ifx\expandafter\relax\detokenize{#2}\relax\else(#2)\fi$}}}
\title{Efficient Compression in Semigroups}
\titlerunning{Efficient Compression in Semigroups}
\author{Alexander Thumm}{University of Siegen \\ Hölderlinstraße 3, 57076 Siegen, Germany}{alexander.thumm@uni-siegen.de}{https://orcid.org/0009-0005-4240-2045}{}
\author{Armin Weiß}{FMI, University of Stuttgart \\ Universitätsstraße 38, 70569 Stuttgart, Germany}{armin.weiss@fmi.uni-stuttgart.de}{https://orcid.org/0000-0002-7645-5867}{}
\authorrunning{A.~Thumm, A.~Weiß}
\keywords{Semigroups, straight-line programs, compression, membership problem}
\begin{document}

\maketitle

\begin{abstract}
  Straight-line programs are a central tool in several areas of computer science, including data compression, algebraic complexity theory, and the algorithmic solution of algebraic equations.
  In the algebraic setting, where straight-line programs can be interpreted as circuits over algebraic structures such as semigroups or groups, they have led to deep insights in computational complexity.

  A key result by Babai and Szemerédi (1984) showed that finite groups afford efficient compression via straight-line programs, enabling the design of a black-box computation model for groups.
  Building on their result, Fleischer (2019) placed the Cayley table membership problem for certain classes (pseudovarieties) of finite semigroups in \NPOLYLOGTIME, and in some cases even in \FOLL. 
  He also provided a complete classification of pseudovarieties of finite \emph{monoids} affording efficient compression.

  In this work, we complete this classification program initiated by Fleischer, characterizing precisely those pseudovarieties of finite \emph{semigroups} that afford efficient compression via straight-line programs. 
  Along the way, we also improve several known bounds on the length and width of straight-line programs over semigroups, monoids, and groups.
  These results lead to new upper bounds for the membership problem in the Cayley table model: for all pseudovarieties that afford efficient compression and do not contain any nonsolvable group, we obtain \FOLL algorithms.
  In particular, we resolve a conjecture of Barrington, Kadau, Lange, and McKenzie (2001), showing that the membership problem for all solvable groups is in \FOLL.
\end{abstract}

\section{Introduction}

The \emph{membership problem} asks, given a (finite) algebraic structure $S$, a set $\Sigma \sse S$, and a target element $t \in S$, whether $t$ belongs to the substructure of $S$ generated by $\Sigma$. 
Thinking of groups, semigroups, or vector spaces, this is a very fundamental problem in computational algebra with many applications.
For permutation groups, Sims gave an efficient solution already in 1967 \cite{Sims67}, later refined to an \NC algorithm by Babai, Luks, and Seress \cite{BabaiLS87}.
For transformation semigroups, Kozen \cite{koz77} showed in 1977 that the problem is \PSPACE-complete, as hard as intersection non-emptiness for deterministic finite automata (DFAs). 

As a different variant, the membership problem \dMemb[CT]{} in the \emph{Cayley table model} was introduced by Jones, Lien, and Laaser \cite{JonesLL76}, where the semigroup is given by its multiplication table; here the problem is \NL-complete.
For groups, Barrington and McKenzie \cite{BarringtonM91} showed that $\dMemb[CT]{\pv{G}}$ can be solved in \L with an oracle to undirected graph reachability \cite{Reingold08}, and conjectured it might be $\LOGSPACE$-hard.
Fleischer \cite{Fleischer19diss,Fleischer22} refuted the latter (under \ACz-reductions) by placing the problem in \NPOLYLOGTIME. 
His proof is based on \emph{straight-line programs} (algebraic circuits or context-free grammars producing precisely one word), a tool central in data compression~\cite{KiefferY00,Bannai16}, algebraic complexity~\cite{BuergisserCS97}, and in solving algebraic equations~\cite{Jez15,CiobanuDE16}. 
A key feature is their support for efficient manipulation of compressed data \cite{Lohrey2012survey,Lohrey14compressed,GanardiJL19,VanderhoevenL25}.

Babai and Szemer\'edi \cite{BabaiS84} showed that finite groups afford \emph{efficient compression}: every element can be expressed by a straight-line program of polylogarithmic length in the size of the group.
Fleischer used this to place $\dMemb[CT]{\pv{G}}$ in \NPOLYLOGTIME and extended the result to pseudovarieties of monoids: efficient compression occurs precisely for Clifford monoids (which comprise both groups and semilattices) and commutative monoids.
In contrast, there is no maximal pseudovariety of semigroups that affords efficient compression.\footnote{At the first glance, the difference between monoids and semigroups might seem negligible; however, the landscape of pseudovarieties of semigroups is much richer than the one of monoids.}

The membership problem has also been studied restricted to other pseudovarieties: Beaudry, McKenzie, and Thérien \cite{BeaudryMT92} investigated aperiodic monoids, while Fleischer, Stober, and the authors~\cite{FleischerSTW25} considered inverse semigroups.

An important variant are straight-line programs of polylogarithmic length and \emph{bounded width}. 
Fleischer showed that they yield membership algorithms in \FOLL (polynomial-size Boolean circuits of depth $\log \log n$), which applies to all commutative semigroups.
Earlier, Barrington, Kadau, Lange, and McKenzie \cite{BarringtonKLM01} placed membership in solvable groups of bounded derived length in \FOLL, and conjectured that the membership problem for \emph{all} solvable groups may also be in \FOLL.
This was partially confirmed by Collins, Grochow, Levet, and the second author~\cite{CollinsGLW25} showing this to be true for the class of all nilpotent groups.

In this work, we complete Fleischer's program to characterize pseudovarieties of finite semigroups that afford efficient compression.
Moreover, we also improve upon some of the best previously-known length and width bounds for semigroups, monoids, and groups.
Finally, we apply our findings to the membership problem, in particular, resolving Barrington, Kadau, Lange, and McKenzie's conjecture.
In more detail, our results are as follows.

\vspace{-2mm}

\subparagraph*{Our Contribution.}

Our main theorem completely characterizes those pseudovarieties $\pv{V}$ of semigroups that afford efficient compression~--~meaning that, for some $k \in \N$, all $S \in \pv{V}$ of size~$\abs{S} \leq N$ admit straight-line programs of length $\mathcal{O}(\smash{\log}^k N)$; see \cref{sec:compression}.
Here, the following three pseudovarieties~--~each requiring straight-line programs of length $\Omega(\sqrt{N})$~--~play a crucial role, since they serve as primary obstructions:
\begin{equation*}
  \pv{LRB} = \pvI{x^2 \approx x, xyx \approx xy},
  \quad 
  \pv{RRB} = \pvI{x^2 \approx x, xyx \approx yx}, 
  \quad
  \pv{T} = \pvI{x^2 \approx xyx \approx 0}.
\end{equation*}

\begin{theorem}\label{thm:main-intro}
	Let $\pv{V}$ be a pseudovariety of semigroups.
	The following are equivalent.
	\begin{bracketenumerate}
		\item The pseudovariety $\pv{V}$ affords efficient compression.
		\item The pseudovariety $\pv{V}$ contains neither $\pv{LRB}$, $\pv{RRB}$, nor $\pv{T}$.
		\item The pseudovariety $\pv{V}$ admits straight-line programs of length $\mathcal{O}(\smash{\log}^2 N)$.
	\end{bracketenumerate}
  Furthermore, if all groups in $\pv{V}$ are solvable, then the above are equivalent to $\pv{V}$ admitting straight-line programs of length $\mathcal{O}(\log N)$ as well as of width $\mathcal{O}(1)$ and length $\mathcal{O}(\polylog N)$.
\end{theorem}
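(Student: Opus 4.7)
The plan is to establish the cyclic chain $(3) \Rightarrow (1) \Rightarrow (2) \Rightarrow (3)$ and then prove the solvable refinement separately. The implication $(3) \Rightarrow (1)$ is immediate since $\mathcal{O}(\log^2 N)$ is polylogarithmic. For $(1) \Rightarrow (2)$ I argue contrapositively by using the $\Omega(\sqrt{N})$ lower bound already announced in the theorem's preamble: for each of $\pv{LRB}$, $\pv{RRB}$, $\pv{T}$ I would exhibit an explicit family of semigroups $S_N$ of size $N$ together with a target element whose shortest SLP has length $\Omega(\sqrt{N})$. The rigidity of the defining identities is crucial in each case: for $\pv{LRB}$ the identity $xyx = xy$ prevents ``reuse'' of an intermediate product once its generators reappear later in the word, the $\pv{RRB}$ case is the left--right dual, and for $\pv{T}$ the identities $x^2 = xyx = 0$ force nontrivial products to avoid any close repetition of letters, so an SLP cannot compress long nonzero products by sharing subterms.

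The heart of the theorem is $(2) \Rightarrow (3)$. The plan is to exploit the Rees--Suschkewitsch structure of finite semigroups: every $S \in \pv{V}$ is filtered by its $\mathcal{J}$-order into principal factors that are either null semigroups or completely $0$-simple Rees matrix semigroups over a maximal subgroup $H$. Absence of $\pv{T}$ in $\pv{V}$ rules out nontrivial null principal factors, while absence of $\pv{LRB}$ (resp.\ $\pv{RRB}$) forces, uniformly across $\pv{V}$, a constant bound on the number of $\mathcal{R}$-classes (resp.\ $\mathcal{L}$-classes) in every principal factor. Consequently, every element $s \in S$ admits a factorisation $s = u \cdot h \cdot v$, where $u$ and $v$ are chosen from bounded-size transversals of $\mathcal{R}$- and $\mathcal{L}$-class representatives and $h$ is a group element. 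I compress $h$ via the Babai--Szemerédi cube lemma, obtaining an SLP of length $\mathcal{O}(\log^2 |H|) = \mathcal{O}(\log^2 N)$, while the transversal representatives $u$ and $v$ are compressed by a recursion along the $\mathcal{J}$-filtration; concatenating the three pieces then yields the required $\mathcal{O}(\log^2 N)$-length SLP for $s$.

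For the furthermore, the structural reduction above remains in force, but the group compression step is upgraded. Solvable groups of order $N$ admit SLPs of length $\mathcal{O}(\log N)$, proved by induction along a subnormal series with abelian factors and invoking additive-combinatorial bounds (Olson-type) for the commutative case. They additionally admit SLPs of constant width and polylogarithmic length, proved by a Barrington-style simulation adapted to the subnormal series. Plugging these improved group bounds into the structural recursion of $(2) \Rightarrow (3)$ then directly yields both the $\mathcal{O}(\log N)$-length and the constant-width, polylog-length statements, with the commutative $\mathcal{H}$-class contributions handled by the same Olson-type results.

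I expect the main obstacle to lie in the algebraic core of $(2) \Rightarrow (3)$: converting the pseudoidentity statement ``$\pv{V}$ does not contain $\pv{LRB}$'' (and its analogues for $\pv{RRB}$ and $\pv{T}$) into the concrete quantitative structural bounds used above. The delicate direction is from pseudovariety avoidance back to structure: if some $S \in \pv{V}$ possesses a principal factor with unboundedly many $\mathcal{R}$-classes, one must extract, via a Reiterman-style limit argument from a suitable power of $S$, an arbitrarily large left-regular band as a subquotient, thereby forcing $\pv{LRB} \subseteq \pv{V}$ and contradicting the hypothesis. The constant-width solvable compression in the furthermore is also subtle, as it must navigate the Rees matrix sandwich while preserving bounded width throughout the recursive assembly of the SLP.
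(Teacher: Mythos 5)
Your top-level strategy (the implication cycle, the reduction of $(2)\Rightarrow(3)$ to group compression, the need for sharper solvable-group bounds in the refinement) mirrors the paper, but the structural core of your $(2)\Rightarrow(3)$ argument rests on two claims that are simply false, and repairing them requires the central outside ingredient you have not identified.

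First, you assert that $\pv{T}\not\sse\pv{V}$ rules out nontrivial null principal factors. It does not: every $2$-nilpotent semigroup ($S^2=0$) is a nontrivial null semigroup, and $\pv{N}_2=\pvI{xy\approx 0}$ does not contain $\pv{T}$. What $\pv{T}\not\sse\pv{V}$ actually gives, via the dichotomy of \cite[Theorem A, Theorem B]{Thumm2025} (\cref{thm:excluding-T}), is that $\pv{V}$ satisfies either a permutation identity or an identity of the form $x_1\cdots x_n\approx x_1\cdots x_{i-1}(x_i\cdots x_j)^{\omega+1}x_{j+1}\cdots x_n$. The latter means $\pv{V}\sse\pvI{xyz\approx xy^{\omega+1}z}\pvM\pv{N}_k$: the nilpotent part is not absent, merely of bounded degree, and is peeled off with \cref{lem:nilpotent} at constant cost.

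Second, you assert that excluding $\pv{LRB}$ and $\pv{RRB}$ forces a uniform bound on the number of $\mathcal{R}$- and $\mathcal{L}$-classes in each principal factor. That is also false: rectangular bands $I\times J$ are normal (hence contain neither $\pv{LRB}$ nor $\pv{RRB}$) yet have $|I|$ many $\mathcal{R}$-classes and $|J|$ many $\mathcal{L}$-classes, both unbounded within $\pv{RB}\sse\pv{NB}$. The actual consequence of excluding $\pv{LRB}$ and $\pv{RRB}$, given complete regularity, is Rasin's theorem \cite[Proposition~4]{Rasin81}: $\pv{V}\cap\pv{CR}\sse\pv{G}\pvM\pv{NB}$, i.e.\ the completely regular members are normal bands of groups. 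The reduction to the group case (\cref{pro:normal-bands-of-groups}) then does not bound the band $B$ by a constant; instead it exploits that $\pv{NB}$ is itself permutative, so the idempotent $e_\alpha$ in the target $\mathcal{J}$-class can be reached by a width-two, $\mathcal{O}(\log N)$-length straight-line program (via \cref{pro:permutative}) before applying Babai--Szemer\'edi inside the subgroup $S_\alpha$.

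On the furthermore: you need improved group bounds, which is correct, but the paper's $\mathcal{O}(\log N)$ unbounded-width bound for solvable groups is obtained by efficiently constructing a generating set adapted to the derived series (\cref{pro:derived-series}, via \cref{lem:derived-series,lem:normal-closure}) and then applying \cref{lem:subnormal_SLP}; no Olson-type additive-combinatorial input is used. The bounded-width bound is not a Barrington-style simulation but comes from the iterated $\commop[i]{\Delta}{\Sigma}$ construction (\cref{lem:pcc-efficient,lem:pcc-effective}), which produces a polycyclic generating set whose elements are each computable with only three registers. These two group-theoretic constructions are where the genuine new content of the refinement lies, and your sketch does not supply an alternative for them.
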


Moreover, if $\pv{V} \subseteq \pv{RB} \pvM \pv{N}_k$ for some $k \geq 1$, then the pseudovariety admits straight-line programs of bounded width and length.
Except in the case that $\pv{V}$ contains a nonsolvable group, we show that these bounds are essentially asymptotically optimal.
Our proofs are fully constructive and can be found in Sections~\ref{sec:obstructions}--\ref{sec:general}.

In \cref{sec:membership}, we apply our findings to the membership problem proving the following result, where \dMemb[CT]{\pv{V}} denotes the membership problem for $\pv{V}$ in the Cayley table model.

\begin{corollary}\label{cor:main_membership}
Let $\pv{V}$ be a pseudovariety of semigroups with $\pv{LRB}$, $\pv{RRB}$, $\pv{T} \not \sse \pv{V}$.
\begin{bracketenumerate}
\item The membership problem \dMemb[CT]{\pv{V}} is in $\NPOLYLOGTIME \sse \qACz$.
	\item If, moreover, $\pv{V}$ contains no nonsolvable group, then \dMemb[CT]{\pv{V}} is in \FOLL. 
\end{bracketenumerate}
\end{corollary}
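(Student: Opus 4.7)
The plan is to combine the existential straight-line program bounds supplied by Theorem~\ref{thm:main-intro} with Fleischer's SLP-to-circuit machinery, which already converts such compression into \NPOLYLOGTIME and \FOLL membership procedures.

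For part~(1), I would proceed by nondeterministic verification. Theorem~\ref{thm:main-intro} guarantees that whenever $t$ lies in the subsemigroup generated by $\Sigma$, there exists an SLP of length $\bigO(\smash{\log}^2 N)$ over $\Sigma$ evaluating to $t$. The algorithm guesses such an SLP together with the value of each of its variables~--~altogether $\bigO(\polylog N)$ bits~--~and, in parallel, verifies each production $X_i = X_j X_k$ via a single \DLOGTIME lookup into the Cayley table, concluding with a check that the root variable evaluates to $t$. This is exactly Fleischer's \NPOLYLOGTIME template, and the inclusion $\NPOLYLOGTIME \sse \qACz$ is the standard alternation-to-circuit translation.

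For part~(2), I would invoke Fleischer's \FOLL evaluator for bounded-width SLPs. The extra hypothesis that $\pv{V}$ contains no nonsolvable group lets Theorem~\ref{thm:main-intro} furnish an SLP of constant width and polylogarithmic length, which may be viewed as a composition of $\bigO(\polylog N)$ transformations on a configuration space of constant size. Pairwise composition in a tournament halves the number of transformations per round, so only $\bigO(\log\log N)$ rounds are needed, each implementable by a constant-depth \ACz circuit with polynomially many Cayley-table lookups~--~exactly the \FOLL format. The main obstacle, in my view, is structural rather than algorithmic: one must verify that the bounded-width SLPs produced by our constructive proofs of Theorem~\ref{thm:main-intro} slot cleanly into this evaluator, in particular that the existential search for the correct SLP can be folded into the constant-width composition (via short, locally verifiable guesses) rather than requiring a separate enumeration that would blow the depth beyond $\bigO(\log\log n)$.
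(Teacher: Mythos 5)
Your treatment of part~(1) is essentially the paper's: guess an SLP of the guaranteed polylogarithmic length together with the register contents, and verify each instruction by a Cayley-table lookup on a random-access machine; this is exactly the \NPOLYLOGTIME argument, and the inclusion $\NPOLYLOGTIME \sse \qACz$ is standard.

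For part~(2), your concern at the end is well-founded and is, as written, a genuine gap. The tournament you sketch composes a \emph{specific} sequence of $\polylog N$ transformations on the register configuration space $S^w$, but that sequence \emph{is} the SLP~--~which is precisely the object you do not have in hand and whose existence is the thing being certified. There are quasipolynomially many candidate SLPs, so separate enumeration indeed blows the size bound, and there is no way to ``fold the guess'' into a composition of particular maps.

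The fix the paper uses (via \cref{lem:complexity} and \cite[Lemma~2.6]{CollinsGLW25}) is to stop composing transformations and instead square the one-step \emph{transition relation} $R \sse S^w \times S^w$ of the SLP executor: $(c,c') \in R$ iff $c'$ results from $c$ by some legal assignment $r_k \gets s$ with $s \in \Sigma$ or product $r_k \gets r_i \cdot r_j$. This relation is on a polynomial-size domain (since $w$ is a constant), its Boolean-matrix square is computable in $\ACz$, and after $\bigO(\log\log n)$ squarings one has the ``reachable in $\polylog n$ steps'' relation; a final lookup checks reachability of a configuration in which some register holds $t$. The existential quantifier over SLPs is absorbed into the union implicit in the relation square, so no enumeration is needed. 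Equivalently, and this is how the paper phrases it: the $w$-register executor with a guessed tape of instructions is a $\NTISP(\polylog n, \log n)$ machine, and $\NTISP(\polylog n, \log n) \sse \FOLL$ is exactly the squaring argument on its polynomial-size configuration graph. You also need the quantitative input, namely \cref{cor:solvable_efficient_compression} (derived from \cref{thm:solvable-groups}), which supplies the bounded-width $\bigO(\polylog N)$ SLPs when $\pv{V}$ contains no nonsolvable group.
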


Our results almost completely answer an open problem due to Fleischer \cite{Fleischer22}, who deemed it ``interesting to see whether the Cayley semigroup membership problem
can be shown to be in \FOLL for all classes of semigroups with the polylogarithmic circuits
property.''
Moreover, we positively resolve Barrington, Kadau, Lange, and McKenzie's conjecture \cite{BarringtonKLM01} concerning the membership problem for the pseudovarierty $\Gsol$ of all finite solvable groups.

\begin{corollary}
 The problem \dMemb[CT]{\Gsol} is in \FOLL. 
\end{corollary}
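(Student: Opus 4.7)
The plan is to deduce this as a direct specialisation of \cref{cor:main_membership}(2). The first step is to observe that $\Gsol$, the class of all finite solvable groups, is indeed a pseudovariety of finite semigroups: it is closed under finite direct products, homomorphic images, and subsemigroups, since a subsemigroup of a finite group is automatically a subgroup (inverses appear as sufficiently high powers). By definition every member of $\Gsol$ is a solvable group, so the hypothesis ``$\pv{V}$ contains no nonsolvable group'' in \cref{cor:main_membership}(2) is trivially met.

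The substance of the argument therefore reduces to checking that none of the three obstruction pseudovarieties $\pv{LRB}$, $\pv{RRB}$, $\pv{T}$ is contained in $\Gsol$. This is immediate because each of them contains a small semigroup that is not a group at all. For $\pv{LRB}$, the two-element left-zero semigroup $\{a,b\}$ with $xy = x$ satisfies both $x^2 \approx x$ and $xyx \approx xy$, hence lies in $\pv{LRB} \setminus \Gsol$; the right-zero analogue witnesses $\pv{RRB} \not\sse \Gsol$; and the two-element null semigroup $\{0,a\}$ with $a^2 = 0$ lies in $\pv{T} \setminus \Gsol$. Alternatively, one may argue uniformly: in any group, $x^2 = x$ forces $x = 1$ (excluding nontrivial left or right regular bands), and possessing a zero forces triviality (excluding $\pv{T}$).

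With all hypotheses of \cref{cor:main_membership}(2) verified, the conclusion $\dMemb[CT]{\Gsol} \in \FOLL$ follows at once, resolving the conjecture of Barrington, Kadau, Lange, and McKenzie. No serious obstacle is expected here; the corollary is essentially a dictionary check confirming that $\Gsol$ falls squarely within the scope of the \FOLL case of the main membership result, with the real work already carried out in the characterisation underlying \cref{thm:main-intro} and the reduction from efficient compression to \FOLL membership algorithms.
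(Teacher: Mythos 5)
Your proposal is correct and takes exactly the approach the paper intends: the corollary is stated as an immediate specialisation of \cref{cor:main_membership}(2), and the checks you carry out (that $\Gsol$ is a pseudovariety of finite semigroups, that it contains no nonsolvable group, and that none of $\pv{LRB}$, $\pv{RRB}$, $\pv{T}$ is contained in it because each contains a nontrivial non-group) are precisely the routine verifications the paper leaves implicit.
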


\subparagraph*{Outline of the Proof.}

Our proof of \cref{thm:main-intro} combines structural results on semigroups with explicit constructions.
The negative results for $\pv{LRB}$, $\pv{RRB}$, and $\pv{T}$ were already established by Fleischer~\cite{Fleischer19diss}.
For the positive direction, we rely on a recent characterization of pseudovarieties $\pv{V}$ satisfying $\pv{T} \not\sse \pv{V}$ due to the first author~\cite{Thumm2025}, which shows that any such pseudovariety necessarily falls into one (or both) of the following two classes.

\begin{itemize}
  \item The pseudovariety $\pv{V}$ is \emph{almost completely regular}, meaning that all its members satisfy an identity of the form $x_1 \cdots x_n \approx x_1 \cdots x_{i-1} (x_i \cdots x_j)^{\omega+1} x_{j+1} \cdots x_n$.
    This condition properly generalizes complete regularity~--~equivalently, being a union of groups~--~which is characterized by the identity $x \approx x^{\omega + 1}$.
    In \cref{sec:general} we show that the general problem for almost completely regular pseudovarieties reduces to this special case.
    If, in addition, the pseudovariety satisfies $\pv{LRB}, \pv{RRB} \not\sse \pv{V}$, then the completely regular members of $\pv{V}$ are necessarily normal bands of groups \cite[Proposition~4]{Rasin81}.
    Exploiting this structural restriction, we further reduce the problem to the group case in \cref{sec:completely_regular}.
    Combined with Babai and Szemerédi's result for groups~\cite{BabaiS84}, this completes the proof.
  \item The pseudovariety $\pv{V}$ is \emph{permutative}, meaning that an identity $x_1 \cdots x_n \approx x_{\sigma(1)} \cdots x_{\sigma(n)}$ holds for all members of $\pv{V}$, where $\sigma \in \mathrm{Sym}(n)$ is some nontrivial permutation of the symbols $1, \dots, n$.
    This notion properly generalizes commutativity, which is characterized by the identity $xy \approx yx$.
    In \cref{sec:permutative} we present a direct proof that such pseudovarieties afford efficient compression, refining an earlier argument for commutative semigroups due to Fleischer~\cite{Fleischer19diss}.
    Our construction yields straight-line programs of essentially optimal length $\bigO(\log N)$ and width two. 
    (Matching lower bounds are established in \cref{sec:constant_length}.)

\end{itemize}

For pseudovarieties in the first class, our reductions are efficient in that questions about asymptotically optimal straight-line program length (and width) reduce to the group case.
The latter is discussed in \cref{sec:groups}, where we additionally present two new constructions for solvable groups, yielding straight-line programs of asymptotically optimal length $\bigO(\log N)$ but unbounded width, and of polylogarithmic length and bounded width, respectively.

\section{Preliminaries}

Due to the nature of our results, this work necessarily intersects several areas within the theory of semigroups and groups, as well as aspects of complexity theory.
As a consequence, we assume that the reader is already familiar with the foundational concepts in these fields or is prepared to consult the relevant literature for further background.

In this section, we provide a very brief overview of the necessary material from semigroup theory and complexity theory, along with a summary of the notational conventions used throughout this paper.
For background on group theory, we refer the reader to \cref{sec:groups}.

\subsection{Semigroups}

We assume that the reader is familiar with the theory of finite semigroups, and we refer to the excellent treatments of the subject by Almeida~\cite{Almeida1994}, and Rhodes and Steinberg~\cite{RhodesSteinberg2009} for relevant background material as well as any undefined terms.

Given a semigroup $S$, we write $T \leq S$ to indicate that $T \sse S$ is a subsemigroup. 
For an arbitrary subset $\Sigma \sse S$, we denote by $\gen[S]{\Sigma}$ the subsemigroup generated by $\Sigma$, consisting of all elements of $S$ expressible as a product of elements of $\Sigma$.
We write $\Sigma^{\leq k}$ for the set of all elements expressible in this way by a product of length at most $k$.
The set of completely regular elements of a finite semigroup $S$ is denoted by $I(S) = \{ s \in S : s^{\omega+1} = s\}$ where, as usual, $s^{\omega}$ denotes the unique idempotent power of an element $s$ of a finite semigroup.

\begin{table}[h]
  \caption{Important pseudovarieties (left) and their relationships (right).}
  \label{tbl:pseudovarieties}
  \begin{minipage}{.66\textwidth}
  \begin{tabular}{ccl}
    \toprule
    Symbol & Identities & Description \\
    \midrule
    $\pv{S}$ &  ---  & all semigroups \\
    $\pv{B}$ & $x^2 \approx x$ & bands (idempotent semigroups) \\
    $\pv{I}$ & $x \approx y$ & trivial semigroups \\
    \midrule
    $\pv{CR}$ & $x^{\omega + 1} \approx x$ & completely regular semigroups \\
    $\pv{G}$ & $x^{\omega} \approx 1$ & groups \\
    \midrule
    $\pv{A}$ & $x^{\omega+1} \approx x^\omega$ & aperiodic semigroups \\
    $\pv{N}$ & $x^\omega \approx 0$ & nilpotent semigroups \\
    \midrule
    $\pv{Com}$ & $xy \approx yx$ & commutative semigroups \\
    \bottomrule
  \end{tabular}
  \end{minipage}
  \begin{minipage}{.33\textwidth}\hfill
  \begin{tikzpicture}
    \fill[color=lipicsLightGray] (-6.4em,-2.2em) -- (+6.4em,-2.2em) -- (+6.4em,11.2em) -- (-6.4em,11.2em) -- cycle;

    \node[circle] (S) at (0, 9em)     {$\mathclap{\pv{S}}$};
    \node[circle] (A) at (+4em, 6em)  {$\mathclap{\pv{A}}$};
    \node[circle] (R) at (-4em, 6em)  {$\mathclap{\pv{CR}}$};
    \node[circle] (N) at (+4em, 3em)  {$\mathclap{\pv{N}}$};
    \node[circle] (B) at ( 0em, 3em)  {$\mathclap{\pv{B}}$};
    \node[circle] (G) at (-4em, 3em)  {$\mathclap{\pv{G}}$};
    \node[circle] (I) at (0,0)        {$\mathclap{\pv{I}}$};

    \begin{scope}[line width=.6pt, line cap=round, shorten <=3pt, shorten >=3pt]
      \draw (S) -- (A);
      \draw (S) -- (R);
      \draw (A) -- (N);
      \draw (A) -- (B);
      \draw (R) -- (B);
      \draw (R) -- (G);
      \draw (N) -- (I);
      \draw (B) -- (I);
      \draw (G) -- (I);
    \end{scope}
  \end{tikzpicture}
  \end{minipage}
\end{table}

Most of our analysis will concern \emph{pseudovarieties} -- that is, classes of finite semigroups closed under formation of finite direct products, subsemigroups, and homomorphic images.
According to Reitermann~\cite[Theorem~3.1]{Reiterman1982}, such a class consists of all finite semigroups satisfying some set of profinite identities.
Pseudovarieties are also closely connected to classes of regular languages exhibiting natural closure properties, as established by Eilenberg~\cite{Eilenberg1976}.

Throughout, we use boldface type to denote pseudovarieties and specify defining sets of (profinite) identities using double-struck square brackets.
For example, $\pv{Com} = \pvI{xy \approx yx}$ indicates that $\pv{Com}$ is the pseudovariety consisting of all finite semigroups satisfying the identity $xy \approx yx$, that is, all finite commutative semigroups.
Some important pseudovarieties, which serve as convenient reference points, are listed in \cref{tbl:pseudovarieties}.

Central in this work, as they form primary obstructions, are the following pseudovarieties:
\begin{equation*}
  \pv{LRB} = \pvI{x^2 \approx x, xyx \approx xy},
  \quad 
  \pv{RRB} = \pvI{x^2 \approx x, xyx \approx yx}, 
  \quad
  \pv{T} = \pvI{x^2 \approx xyx \approx 0}.
\end{equation*}
For reference, $\pv{T}$ is a pseudovariety of nilpotent semigroups (that is, $\pv{T} \sse \pv{N}$), while the classes $\pv{LRB}$ and $\pv{RRB}$ consist of all finite \emph{left-regular} and \emph{right-regular bands}, respectively.

Other pseudovarieties of bands that we consider here are $\pv{RB} = \pvI{x^2 \approx x, xyz \approx xz}$, comprising \emph{rectangular bands};\footnote{Be aware that in the literature $\pv{RB}$ sometimes denotes the pseudovariety of \emph{regular bands} instead.} $\pv{Sl} = \pv{B} \cap \pv{Com} = \pvI{x^2 \approx x, xy \approx yx}$, comprising \emph{semilattices}; and $\pv{NB} = \pvI{x^2 \approx x, uxyv \approx uyxv}$, comprising \emph{normal bands}.
The (pseudo-)varieties of bands have been completely classified by Biryukov~\cite{Biryukov1970}, Fennemore~\cite{Fennemore1971}, and Gerhard~\cite{Gerhard1970}.

In addition, we consider pseudovarieties defined in terms of extensions by nilpotent semi\-groups, which are most conveniently expressed as Mal'cev products (though used here only for notation).
Given a pseudovariety~$\pv{V}$, we write $\pv{V} \pvM \pv{N}$ for the pseudovariety with $S \in \pv{V} \pvM \pv{N}$ if and only if the ideal $S^k \leq S$ belongs to $\pv{V}$ for some~$k \geq 1$~--~that is, the semigroup~$S$ is an extension of $S^k \in \pv{V}$ by the Rees quotient~$S / S^k \in \pv{N}$, which is obtained by identifying all elements of $S^k$.
Fixing $k \geq 1$ yields the pseudovariety~$\pv{V} \pvM \pv{N}_k$ instead, where $\pv{N}_k = \pvI{x_1 \cdots x_k \approx 0}$ refers to the pseudovariety of finite \emph{$k$-nilpotent} semigroups.

\subsection{Complexity}

We assume that the reader is familiar with standard complexity classes such as \LOGSPACE, \NL, \NC, \PTIME, \NP, and \PSPACE; see, for example, \cite{AroBar09}. 
Throughout, we write $\polylog n$ for $\log^{\bigO(1)}n$.

For sublinear time classes, we use random-access Turing machines meaning that the Turing machine has a separate address tape and a query state; whenever the Turing machine goes into the query state and the address tape contains the number $i$ in binary, the $i$th symbol of the input is read (the content of the address tape is \emph{not} deleted after that).
Apart from that, random-access Turing machines work like regular Turing machines.

For functions $t(n), s(n) \in \Omega(\log n)$, the classes $\DTISP(t(n), s(n))$ and $\NTISP(t(n), s(n))$ consist of the problems decidable by (non-)deterministic, $\mathcal{O}(t(n))$-time and $\mathcal{O}(s(n))$-space bounded, random-access Turing machines.
Be aware that there must be one Turing machine that simultaneously satisfies the time and space bound.
Without restricting the available space one obtains the classes $\DTIME(t(n))$ and $\NTIME(t(n))$.
We also define 
\[
  \NPOLYLOGTIME = \NTIME(\polylog n) = \bigcup_{\smash{c \geq 1}} \NTIME(\log^c n).
\]

The class $\ACz$ is defined as the class of problems decidable by polynomial-size, constant-depth Boolean circuits where all gates may have arbitrary fan-in.
The classes $\FOLL$ and $\qACz$ are defined analogously but allowing for circuits of polynomial size and depth $\bigO(\log \log n)$, and for quasipolynomial size (\ie $2^{\polylog n}$) and constant depth, respectively.
Throughout, we consider only uniform circuit classes (specifically, $\DTIME(\log n)$-uniform circuits for $\ACz$ and $\FOLL$, and $\DTIME(\polylog n)$-uniform circuits for $\qACz$), meaning that the circuits can be constructed (or verified) efficiently; see \cite{Vollmer99} for details.

\section{Compression via Straight-Line Programs}\label{sec:compression}

In this work, we are interested in the efficient representation of semigroup elements using straight-line programs.
These are commonly defined as circuits over the algebraic structure (e.g.\ \cite{Lohrey14compressed}), via context-free grammars that generate a single word (e.g.\ \cite{Lohrey2012survey}) or, in some cases, as sequences of elements corresponding to intermediate values of the former (e.g.\ \cite{BabaiS84}).

Here we take a pragmatic point of view.
Given a semigroup $S$, a \emph{straight-line program}~$\mathcal{A}$ over a set $\Sigma \sse S$ is a finite sequence of instructions to be executed in order (that is, without branches or loops) and operating on a potentially unbounded set of registers $\{ r_1, r_2, \dots \}$. 
Each register $r_k$ can store a single element of the semigroup $S$, and the straight-line program may use the following two instruction types to alter the contents of the registers.
\begin{samepage}
\smallskip
\begin{itemize}
  \addtolength{\itemsep}{.5pt}
  \newcommand{\instruction}[1]{\setlength\fboxsep{5pt}\colorbox{lipicsLightGray}{\makebox[10em][l]{\;$\vphantom{x}\smash{#1}$\hfill}}}
  \item Assign the fixed element $s \in \Sigma$ to the register $r_k$. 
    \hfill\instruction{r_k \gets s}
  \item Assign the product of the registers $r_i, r_j$ to the register $r_k$. 
    \hfill\instruction{r_k \gets r_i \cdot r_j}
\end{itemize}
\smallskip
\end{samepage}
In the second type of instruction, the registers $r_i$, $r_j$, and $r_k$ are not necessarily distinct, but we require that the input registers $r_i$ and $r_j$ were each assigned in some previous instruction.

The straight-line program $\mathcal{A}$ is said to \emph{compute} a semigroup element $t \in S$ if, upon completion of execution, some register $r_k$ contains the value $t$.
More generally, $\mathcal{A}$ computes some set $T \sse S$ if it computes every $t \in T$, and the largest such set is the \emph{value set} $V(\mathcal{A})$.

The \emph{length} $\ell({\mathcal{A}})$ and \emph{width} $w(\mathcal{A})$ of the straight-line program $\mathcal{A}$ are the number of its instructions and the number of registers it operates on, respectively.
Intuitively, the length and width of a straight-line program measure the time and space required to execute it. 

\medskip

Let $S$ be a finite semigroup and $\Sigma \subseteq S$.
Given a set $T \sse S$ and a width bound $2 \leq w \leq \infty$, we define the \emph{straight-line cost} of $T$ over $\Sigma$ to be the quantity 
\[
c^w_S(T; \Sigma) \coloneqq \min \bigg\{ \,\ell(\mathcal{A})\, : \parbox{16em}{\centering$\mathcal{A}$ is a straight-line program over $\Sigma$\\ with $T \sse V(\mathcal{A})$ and $w(\mathcal{A}) \leq w$}\bigg\}.
\]
The straight-line cost of an element $t \in S$, written $c^w_S(t; \Sigma)$, is defined analogously.
Sometimes we employ more general terminology and refer to either straight-line programs of \emph{bounded width}, corresponding to arbitrary $w < \infty$, or \emph{unbounded width}, in the case where $w = \infty$.

\medskip

As with conventional programs, straight-line programs can be composed sequentially and invoked as subroutines, modulo simple modifications such as register renaming. 
Applying such composition techniques leads to the following estimates.

\begin{lemma}\label{lem:augmented-generating-set}
  Let $S$ be a finite semigroup, $\Sigma, \Delta \sse S$, and $t \in S$.
  Then, for all $2 \leq w, \delta \leq \infty$,
  \vspace{-1ex}
  \[
    c^{w + \delta}_S(t; \Sigma) \leq c^{w}_S(t; \Sigma \cup \Delta) + c^{\delta}_S(\Delta; \Sigma)
    \quad\text{and}\quad
    c^{w+\delta - 1}_S(t; \Sigma) \leq c^{w}_S(t; \Sigma \cup \Delta) \cdot \max_{t' \in \Delta} c^{\delta}_S(t'; \Sigma).
  \]
\end{lemma}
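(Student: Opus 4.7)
The plan is to view both inequalities as instances of program composition, with the ``outer'' program $\mathcal{A}$ computing $t$ over the enriched alphabet $\Sigma \cup \Delta$ and an ``inner'' program (or family of programs) realising the elements of $\Delta$ from $\Sigma$. The two bounds correspond to the two natural ways of combining them: executing the inner program once and sharing its outputs across all uses, versus inlining it once per use.

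For the first inequality, I would fix optimal programs $\mathcal{A}$ of width $w$ computing $t$ over $\Sigma \cup \Delta$, and $\mathcal{B}$ of width $\delta$ over $\Sigma$ with $\Delta \sse V(\mathcal{B})$, and allocate two disjoint register blocks: $r_1,\dots,r_w$ for $\mathcal{A}$ and $r'_1,\dots,r'_\delta$ for $\mathcal{B}$. Running $\mathcal{B}$ first leaves, for each $d \in \Delta$, a designated register $r'_{m(d)}$ permanently holding $d$, since the $\mathcal{B}$-block is never touched again. Then, while processing $\mathcal{A}$, I would drop every assignment instruction of the form $r_k \gets d$ with $d \in \Delta$ and maintain a symbolic map recording that $r_k$ now stands for $d$; in every later multiplication instruction, up to the next explicit re-assignment of $r_k$, operands referring to $r_k$ are rerouted to $r'_{m(d)}$. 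The combined program has length at most $\ell(\mathcal{A}) + \ell(\mathcal{B})$ and width $w + \delta$; its output is still $t$, either in the $\mathcal{A}$-block or, if $\mathcal{A}$'s final instruction was a deleted $\Delta$-assignment, in $r'_{m(t)}$.

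For the second inequality, I would instead inline one copy of a width-$\delta$ program $\mathcal{B}_d$ computing $d$ whenever an instruction $r_k \gets d$ of $\mathcal{A}$ is encountered. Before inlining, I rename the registers of $\mathcal{B}_d$ so that its output register---the one holding $d$ at the end---is identified with $r_k$, and the remaining $\delta - 1$ scratch registers are fresh auxiliaries, reused across all inlinings. The renaming is legitimate precisely because the original instruction was about to overwrite $r_k$, so destroying its live content during $\mathcal{B}_d$'s execution costs nothing. The resulting program uses only $w + \delta - 1$ registers and has length at most $\ell(\mathcal{A}) \cdot \max_{d \in \Delta} \ell(\mathcal{B}_d)$.

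The main bookkeeping obstacle in both constructions is checking that the rewritten instruction sequences remain valid straight-line programs---that every register referenced as an operand has been assigned by an earlier instruction. For the first construction this reduces to observing that the symbolic substitution only reroutes uses of $r_k$ (after a deleted $r_k \gets d$) to $r'_{m(d)}$, which was genuinely assigned during $\mathcal{B}$'s execution, and that any explicit re-assignment to $r_k$ inside $\mathcal{A}$ clears the symbolic map before its next use. For the second, the validity of each inlined block follows from that of $\mathcal{B}_d$ itself, since the renaming respects its internal dependencies. The infinite-width cases are immediate from the convention that $\infty + x = \infty$.
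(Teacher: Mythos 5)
Your proof is correct and matches the approach the paper only sketches (``composed sequentially and invoked as subroutines, modulo simple modifications such as register renaming''): the first inequality is sequential composition with separate register blocks and symbolic rerouting of $\Delta$-assignments, and the second is subroutine inlining with the output register identified with the target register and $\delta-1$ shared scratch registers. The two points you flag — that the rerouted reads always target $\mathcal{B}$-block registers that are never overwritten after $\mathcal{B}$ finishes, and that overwriting $r_k$ mid-inlining is harmless because $\mathcal{A}$'s instruction $r_k \gets d$ was about to destroy it anyway — are precisely the key observations, and the register-count and length accounting is accurate.
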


A fundamental subroutine, which we often employ without explicit reference, is fast exponentiation via repeated squaring. 
In the context of straight-line programs, this technique dates back at least to 1937, when Scholz presented it as an application of \emph{addition chains} \cite{Scholz1937}.

\begin{observation}\label{obs:exponentiation}
  Let $S$ be a finite semigroup, $t \in S$, and $n \geq 1$.
  Then $c_S^2(t^n; \{t\}) \in \mathcal{O}(\log n)$.
\end{observation}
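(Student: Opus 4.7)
\textbf{Proof plan for \cref{obs:exponentiation}.}
The plan is to exhibit, for every $n \ge 1$, an explicit straight-line program with two registers and $\bigO(\log n)$ instructions that computes $t^n$ from the generator~$t$. The construction is the standard left-to-right binary exponentiation (Horner-style repeated squaring), adapted to the two-register constraint.

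Concretely, write $n = \sum_{i=0}^{k} b_i 2^i$ with $b_k = 1$ and $k = \lfloor \log_2 n \rfloor$. I would construct the program
\begin{equation*}
  r_1 \gets t; \quad r_2 \gets t; \quad \underbrace{r_1 \gets r_1 \cdot r_1; \;\; [r_1 \gets r_1 \cdot r_2]}_{\text{for } i = k-1}; \;\; \cdots \;\; \underbrace{r_1 \gets r_1 \cdot r_1; \;\; [r_1 \gets r_1 \cdot r_2]}_{\text{for } i = 0},
\end{equation*}
where the bracketed instruction is included only when $b_i = 1$. Register $r_2$ permanently stores the generator $t$ and is used solely as the right operand of the conditional multiplications, while $r_1$ accumulates the running power. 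Correctness follows from the invariant that after processing bit $i$, the register $r_1$ holds $t^{m_i}$, where $m_i$ is the integer whose binary representation is $b_k b_{k-1} \cdots b_i$; squaring $r_1$ and optionally multiplying by $t$ realizes the recurrence $m_{i-1} = 2 m_i + b_{i-1}$. After processing bit $0$, we have $r_1 = t^n \in V(\mathcal{A})$.

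For the length bound, the program uses two initialization instructions and at most two instructions per bit processed, yielding $\ell(\mathcal{A}) \le 2 + 2k \le 2 + 2 \log_2 n \in \bigO(\log n)$. The width bound is immediate since only the registers $r_1$ and $r_2$ are ever referenced, giving $w(\mathcal{A}) = 2$. Combining these, $c_S^2(t^n; \{t\}) \le \ell(\mathcal{A}) \in \bigO(\log n)$, as claimed.

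There is no real obstacle here; the only subtlety is that the semigroup $S$ need not possess an identity element, which rules out the textbook initialization ``result $\gets 1$'' and forces us to consume the leading bit $b_k$ via the initial assignment $r_1 \gets t$ rather than a multiplication, and to hold a second copy of $t$ in $r_2$ so that width $2$ actually suffices.
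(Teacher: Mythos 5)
Your construction is exactly the standard repeated-squaring (Horner/left-to-right binary exponentiation) argument that the paper invokes when it attributes the observation to addition chains; the paper treats this as a routine fact and gives no separate proof. Your handling of the two-register constraint (keep a spare copy of $t$ in $r_2$, accumulate in $r_1$, consume the leading bit via the initialization to avoid needing an identity element) is correct and is precisely what is needed.
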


One of our primary concerns are worst-case bounds for $c^w_S(t; \Sigma)$ as $\Sigma \sse S$ and $t \in S$ vary; that is, in the quantity $C_S^w \coloneqq \max \big\{c^w_S(t; \Sigma) : t \in \gen[S]{\Sigma} \sse S \big\}$.
More specifically, we seek to asymptotically bound $C^w_S$ in the size of the semigroup $S$ as the latter ranges over the members of a given pseudovariety~$\pv{V}$.
To this end, let us define $C^w_{\pv{V}} \colon \mathbb{N} \to \mathbb{N}$ by
\begin{equation*}
  C^w_{\pv{V}}(N) \coloneqq \max \big\{ C^w_S : S \in \pv{V}, \abs{S} \leq N \big\}.
\end{equation*}

We then say that $\pv{V}$ \emph{admits} straight-line programs of width $w$ and length $\mathcal{O}(f(N))$ provided that $C^w_{\pv{V}}(N) \in \mathcal{O}(f(N))$ and, conversely, that $\pv{V}$ \emph{requires} straight-line programs of length $\Omega(f(N))$ provided that $C^\infty_{\pv{V}}(N) \in \Omega(f(N))$.
Finally, we say that a pseudovariety $\pv{V}$ affords \emph{efficient compression} (via straight-line programs) if $C^\infty_{\pv{V}}(N) \in \mathcal{O}(\polylog N)$.

\section{Obstructions to Efficient Compression}\label{sec:obstructions}

As mentioned in the introduction, the following three pseudovarieties constitute the primary obstructions to efficient compression in semigroups via straight-line programs:

\begin{equation*}
  \pv{LRB} = \pvI{x^2 \approx x, xyx \approx xy},
  \quad 
  \pv{RRB} = \pvI{x^2 \approx x, xyx \approx yx}, 
  \quad
  \pv{T} = \pvI{x^2 \approx xyx \approx 0}.
\end{equation*}

A simple consequence of their defining identities is that representing elements of their members in terms of generators essentially requires \emph{injective words}, that is, words in which each generator appears at most once.  
Indeed, for $\pv{T}$, all nonzero elements must be represented this way, while for $\pv{LRB}$ and $\pv{RRB}$, any word in generators defines the same element as the word obtained by keeping only the first or last occurrence of each generator, respectively.

Intuitively, since injective words contain no repeated factors, they cannot be efficiently compressed via straight-line programs (or any compression method relying on such repetitions).
However, because we measure efficiency relative to the size of the semigroup, a formal proof requires exhibiting members of these pseudovarieties in which some element cannot be expressed using few generators, as done by Fleischer~\cite[Lemma~4.11, Lemma~4.15]{Fleischer19diss}.

For the reader's convenience, we include Fleischer's construction below. 
Here, $\Omega_n(\pv{V})$ denotes the free pro-$\pv{V}$ semigroup on $n$ generators.
For $\pv{V} \in \{ \pv{LRB}, \pv{RRB}, \pv{T} \}$, this semigroup is finite (and hence a member of $\pv{V}$) and admits a convenient model consisting of all injective words over an $n$-element alphabet (and, in case of $\pv{T}$, an additional zero element) where composition is concatenation with subsequent application of the respective identities.

\begin{lemma}[Fleischer]\label{lem:obstructions}
  Let $\pv{V}$ be a pseudovariety that contains $\pv{LRB}$, $\pv{RRB}$, or $\pv{T}$.
  Then, for every $N \geq 1$, there exist $S \in \pv{V}$ with $\abs{S} \leq N$, a generating set $\Sigma \sse S$ with $\abs{\Sigma} \in \Omega(\sqrt{N})$, and an element $t \in S$ such that no subsemigroup generated by a proper subset of $\Sigma$ contains~$t$. 

  \smallskip

  \noindent{}In particular, $C^\infty_{\pv{V}}(N) \in \Omega(\sqrt{N})$; that is, $\pv{V}$ requires straight-line programs of length $\Omega(\sqrt{N})$.
\end{lemma}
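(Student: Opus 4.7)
My plan is to prove the lemma by exhibiting, for each of the three pseudovarieties $\pv{LRB}$, $\pv{RRB}$, $\pv{T}$, an explicit family of ``hard'' semigroups $\{S_n\}_{n \geq 1}$ of size $\tfrac{1}{2}n(n+1) + \bigO(1)$, each generated by a set $\Sigma_n$ of $n$ elements and containing a distinguished element $t_n$ that cannot be produced by any proper subset of $\Sigma_n$. Setting $n = \Theta(\sqrt{N})$ then yields the desired $S$, $\Sigma$, and $t$ with $\abs{S} \leq N$ and $\abs{\Sigma} \in \Omega(\sqrt{N})$; since $S_n$ will belong to whichever of the three pseudovarieties is contained in $\pv{V}$, membership in $\pv{V}$ is automatic.

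For $\pv{T}$ I would take $S_n$ to have as non-zero elements the contiguous subwords $s_{i,j} = a_i a_{i+1} \cdots a_j$ of the word $a_1 \cdots a_n$, with product $s_{i,j} \cdot s_{k,l} = s_{i,l}$ when $k = j+1$ and $0$ otherwise (with $0$ absorbing). Both defining identities $x^2 \approx 0$ and $xyx \approx 0$ are immediate from the index restrictions, so $S_n \in \pv{T}$. Taking $\Sigma_n = \{a_1, \ldots, a_n\}$ with $a_i = s_{i,i}$ and $t_n = s_{1,n}$, any subsemigroup generated by $\Sigma_n \setminus \{a_k\}$ is confined to subwords not straddling position $k$, and therefore cannot produce $t_n$.

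For $\pv{LRB}$ I would take $S_n = \os{(i,j) : 0 \leq i < j \leq n} \union \os{0}$ with multiplication $(i_1,j_1) \cdot (i_2,j_2) = (i_1, \max(j_1,j_2))$ whenever $i_2 \leq j_1$, and equal to $0$ (which is absorbing) otherwise. A short case analysis verifies $x^2 \approx x$ and $xyx \approx xy$. Setting $a_k = (k-1,k)$ and $t_n = (0,n)$, the nonzero-product condition forces any generator chain $a_{k_1} \cdots a_{k_m}$ with nonzero value to satisfy $k_{\ell+1} \leq \max(k_1, \ldots, k_\ell) + 1$; hence reaching $t_n = (0,n)$ requires both $k_1 = 1$ and that the running maximum climb from $1$ to $n$, which in turn forces every $a_k$ with $1 \leq k \leq n$ to appear. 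The case $\pv{RRB}$ follows by applying the same construction to the opposite semigroup, using that the opposite of a left-regular band is a right-regular band.

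The ``in particular'' claim is then immediate: each $a_k$ is essential for $t_n$, and an easy induction on the length of a straight-line program shows that every register's value lies in the subsemigroup generated by the elements loaded so far; hence computing $t_n$ demands a load instruction for each of the $n$ generators, so any such program has length at least $n = \Omega(\sqrt{N})$, giving $C_{\pv{V}}^\infty(N) \in \Omega(\sqrt{N})$. The main technical point to handle carefully will be the verification of the defining identities for the $\pv{LRB}$ construction (and its dual for $\pv{RRB}$), where one must check the behaviour of multiplication when one or more factors fall into the ``zero'' case of the multiplication rule; beyond that, the argument is a matter of unwinding the definitions.
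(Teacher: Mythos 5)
Your approach to $\pv{T}$ is sound and is, up to reformulation, the same as the paper's: the paper realizes the same contiguous-subword monoid as the Rees quotient $\Omega_n(\pv{T})/I$ of the free pro-$\pv{T}$ semigroup by the ideal generated by the products $s_i s_j$ with $i+1\neq j$, and your explicit multiplication table is exactly the resulting structure. The overall strategy (explicit family of size $\Theta(n^2)$ with $n$ pairwise-essential generators) and the final ``in particular'' step are also fine.

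However, the $\pv{LRB}$ construction is broken: the proposed multiplication is not associative, so $S_n$ is not even a semigroup. Take $x=(2,3)$ and $y=(0,1)$. Then $xy=(2,3)(0,1)$ has $i_2=0\leq j_1=3$, so $xy=(2,\max(3,1))=(2,3)$, and hence $(xy)x=(2,3)(2,3)=(2,3)$; but $yx=(0,1)(2,3)$ fails the condition $i_2\leq j_1$ (since $2\not\leq 1$), so $yx=0$ and $x(yx)=0$. Thus $(xy)x\neq x(yx)$. This is not a detail one can patch by ``checking the zero case carefully''~--~the ad-hoc rule ``multiply coordinates when a side condition holds, send to $0$ otherwise'' simply does not yield an associative product here, and removing the zero (always setting $(i_1,j_1)(i_2,j_2)=(i_1,\max(j_1,j_2))$) \emph{does} give a left-regular band but then destroys the lower bound, since e.g.\ $a_1 a_n = (0,1)(n-1,n)=(0,n)=t_n$ already. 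The paper avoids this entirely by defining $S$ as a \emph{quotient} $\Omega_n(\pv{LRB})/\theta$ of the free pro-$\pv{LRB}$ semigroup by the congruence generated by $s_i s_j = s_j$ for $i+1<j$: associativity and membership in $\pv{LRB}$ are automatic because one is quotienting a semigroup already in $\pv{LRB}$, and the $\theta$-relations collapse all non-contiguous products without introducing a zero. Your $\pv{RRB}$ case inherits the same defect by duality. To repair the proposal you should either reproduce the paper's quotient construction, or, if you insist on an explicit model, first exhibit a bona fide left-regular band with the right properties and verify associativity before anything else.
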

\begin{proof}
  Let $s_1, \dots, s_n$ be the generators of $\Omega_n(\pv{LRB})$ and consider $S = \Omega_n(\pv{LRB}) / \theta$ where $\theta$ is the congruence generated by $s_i s_j \mathrel{\theta} s_j$ for all $i, j$ with $i+1 < j$.
  Note that, under the same conditions, we also have $s_j s_i = s_j s_i s_j \mathrel{\theta} s_j s_j = s_j$.
  Thus, the elements of $S$ are precisely those of the form $[s_i s_{i+1} \cdots s_j]_\theta$ with $1 \leq i \leq j \leq n$; in particular, it holds that $\abs{S} \in \mathcal{O}(n^2)$.

  The element $t = [s_1s_2 \cdots s_n]_\theta$, on the other hand, is \emph{not} a member of the subsemigroup generated by any proper subset of $\Sigma = \{ [s_1]_\theta, \dots, [s_n]_\theta\} \sse S$.
  Therefore, every straight-line program computing $t$ over $\Sigma$ must use every generator, and thus $c^\infty_S(t; \Sigma) \in \Omega(n)$.

  \smallskip

  The above argument proves the assertion in case $\pv{LRB} \sse \pv{V}$ and, by symmetry, this also holds in case $\pv{RRB} \sse \pv{V}$.
  In case $\pv{T} \sse \pv{V}$, one considers the Rees quotient $S = \Omega_n(\pv{T}) / I$ where $I$ is the ideal generated by all elements $s_is_j$ with $i+1 \neq j$. 
  Since the remainder of the argument is virtually identical, we omit the details.
\end{proof}

\section{Semigroups of Bounded Diameter}\label{sec:constant_length}

In this section, we classify pseudovarieties where every semigroup element can be written as a product of bounded length over any generating set (see \cref{thm:bounded-diameter}) and show that all other pseudovarieties require straight-line programs of length $\Omega(\log N)$.

\begin{definition}
  Let $\pv{V}$ be a pseudovariety.
  We say that $\pv{V}$ has \emph{bounded diameter} if there is a constant $D \geq 1$ such that, for every $S \in \pv{V}$ and generating set $\Sigma \sse S$, it holds that $\Sigma^{\leq D} = S$; that is, every element $t \in S$ is a product of generators from $\Sigma$ of length at most $D$.
\end{definition}

Clearly, if $\pv{V}$ has bounded diameter, then $C^2_{\pv{V}}(N) \leq 2D - 1$ (with $D$ as above); and, conversely, if $C^{\infty}_\pv{V}(N) \leq C$ for some constant $C$, then $\pv{V}$ has bounded diameter (with $D = 2^C$).
In other words, the pseudovarieties of bounded diameter are precisely the pseudovarieties admitting straight-line programs of length (and width) $\mathcal{O}(1)$.
An obvious example of such a pseudovariety is $\pv{N}_k = \pvI{x_1 \cdots x_k \approx 0}$, which consists of all finite $k$-nilpotent semigroups.

The following result, which will also be of use in the proof of our main theorem, generalizes this example to Mal'cev products of the form $\pv{V} \pvM \pv{N}_k$.
(Indeed, it applies to $\pv{N}_k = \pv{I} \pvM \pv{N}_k$ where $\pv{I} = \pvI{x \approx 1}$ is the trivial pseudovariety.)
Recall that a finite semigroup $S$ belongs to the pseudovariety $\pv{V} \pvM \pv{N}_k$ if and only if its ideal $S^k \leq S$ belongs to $\pv{V}$. 

\begin{lemma}\label{lem:nilpotent}
  Let $\pv{V} \sse \pv{V}' \pvM \pv{N}_k$ for a pseudovariety $\pv{V}'$ and $k \geq 1$.
  Then, for all $2 \leq w \leq \infty$,
  \[
    C^{w+1}_\pv{V}(N) \leq C^{w}_{\pv{V}'}(N) \cdot (4k - 3).
  \]
\end{lemma}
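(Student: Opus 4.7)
The plan is to exploit the two-tier structure of any $S \in \pv{V} \sse \pv{V}' \pvM \pv{N}_k$: its ideal $S^k$ lies in $\pv{V}'$, while $S \setminus S^k$ consists of elements that, over any generating set $\Sigma \sse S$, admit only $\Sigma$-factorizations of length strictly less than $k$. Given such a $\Sigma$ and a target $t \in S$, we distinguish two cases. If $t \notin S^k$, then scanning through any length-$(\leq k-1)$ factorization of $t$ yields a width-$2$ straight-line program of length at most $2(k-1) - 1 \leq 4k - 3$, which already fits into the claimed bound since $C^w_{\pv{V}'}(N) \geq 1$.

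For the main case $t \in S^k$ we pick the auxiliary set
\[
  \Delta \;=\; \bigl\{\sigma_1 \cdots \sigma_n : \sigma_i \in \Sigma, \; k \leq n \leq 2k-1 \bigr\} \;\sse\; S^k.
\]
Division with remainder shows that any $\Sigma$-product of length $n \geq k$ factors into consecutive blocks whose lengths lie in $[k, 2k-1]$, so $\Delta$ generates the subsemigroup $S^k$. Because $S^k \in \pv{V}'$ and $\abs{S^k} \leq N$, the very definition of $C^w_{\pv{V}'}$ gives $c^w_{S^k}(t; \Delta) \leq C^w_{\pv{V}'}(N)$; and because every element of $\Delta$ is a product of at most $2k-1$ generators from $\Sigma$, its sequential-scan computation (load–multiply–load–multiply–$\cdots$) has width $2$ and length at most $2(2k-1) - 1 = 4k - 3$.

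The conclusion then follows from the product form of \cref{lem:augmented-generating-set} applied with its parameter set to $2$: a single auxiliary register beyond the $w$ used by the top-level $\pv{V}'$-program on $\Delta$ is enough to in-line the sequential-scan subroutines producing the elements of $\Delta$ from $\Sigma$, yielding $c^{w+1}_S(t; \Sigma) \leq C^w_{\pv{V}'}(N) \cdot (4k - 3)$ as required.

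The only delicate point is the constant tracking. Counting each load instruction $r_k \gets s$ alongside the multiplications gives a program length of $2n - 1$, rather than $n - 1$, for a product of $n$ generators, and this is precisely what produces the factor $4k - 3$ instead of the naive $2k - 2$. The register accounting is likewise tight: the subroutines must share a register with the enclosing $\pv{V}'$-program so that only one auxiliary register is added, which is the reason to invoke the product form of \cref{lem:augmented-generating-set} (with the $-1$ in the exponent $w + \delta - 1$) rather than its additive counterpart.
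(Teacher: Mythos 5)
Your argument follows the paper's proof essentially step for step: the same case split on whether $t \in S^k$, the same idea of taking an auxiliary generating set $\Delta$ of products of $\Sigma$-length at most $2k-1$ that lie in $S^k$ (computed at width $2$ and cost $\leq 4k-3$ each), and the same invocation of the multiplicative form of Lemma~\ref{lem:augmented-generating-set} to combine. The only cosmetic difference is that you restrict $\Delta$ to products of length in $[k, 2k-1]$ while the paper takes $\Sigma^{\leq 2k-1}\cap S^k$; both sets generate $S^k$, so the arguments coincide.
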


Herein, the factor $4k - 3$ is a constant and, hence, suppressed in Landau notation, in which the bound reads $C^{w+1}_{\pv{V}}(N) \in \mathcal{O}(C^{w}_{\pv{V}'}(N))$.
However, be aware that $\pv{N} = \bigcup_k \pv{N}_k$, which consists of all finite nilpotent semigroups, does \emph{not} afford efficient compression since $\pv{T} \sse \pv{N}$.

\begin{proof}
  Consider a semigroup $S \in \pv{V}$, a generating set $\Sigma \sse S$, and an element $t \in S$.
  If the element $t$ is not contained in the ideal $S^k \leq S$, then it can be written as a product of generators from $\Sigma$ of length less than $k$; hence $c^2_S(t; \Sigma) \leq 2k - 3$.
  Otherwise, since $S^k \in \pv{V}'$ is generated by the set $\Delta = \Sigma^{\leq 2k - 1} \cap S^k$, the second inequality of \cref{lem:augmented-generating-set} yields
  \[
    c^{w+1}_S(t; \Sigma) 
    \leq c^w_{S^k}(t; \Delta) \cdot \max_{t' \in \Delta}c^2_{S}(t'; \Sigma) 
    \leq C^w_{\pv{V}'}(\abs{S}) \cdot (4k - 3). \qedhere
  \]
\end{proof}

Another example of bounded diameter is given by $\pv{RB} = \pvI{x^2 \approx x, xyz \approx xz}$, which consists of all finite \emph{rectangular bands}, and satisfies $C^2_\pv{RB}(N) \leq 3$.
Combining the latter with \cref{lem:nilpotent} yields $C^3_{\pv{V}}(N) \leq 12k - 9$ for every pseudovariety $\pv{V} \sse \pv{RB} \pvM \pv{N}_k$.
Using a direct argument (which is straight-foreword and, therefore, omitted), this can be slightly improved.

\begin{lemma}\label{lem:rectangular-by-nilpotent}
  If $\pv{V} \sse \pv{RB} \pvM \pv{N}_k$ for some $k \geq 1$, then $C^2_{\pv{V}}(N) \leq 6k - 3$.
\end{lemma}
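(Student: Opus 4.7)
The plan is to express every $t \in S$ as a product of boundedly many generators from $\Sigma$ and then compute this product sequentially with two registers. I split into cases based on whether $t$ lies in the ideal $S^k$, which by assumption is a rectangular band. If $t \notin S^k$, then since any product of length at least $k$ lies in $S^k$, the element $t$ must be expressible as a product of fewer than $k$ generators from $\Sigma$, computable with width~$2$ and length at most $2k - 3$. Otherwise $t \in S^k$, and I write $t = s_1 \cdots s_m$ with $s_i \in \Sigma$ and $m \geq k$, aiming to exploit the rectangular-band structure of $S^k$ to shorten the product.

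The central structural rewrite is as follows: for $m \geq 2k$, setting $\alpha = s_1 \cdots s_k$ and $\beta = s_{m-k+1} \cdots s_m$ (both lying in $S^k$), I claim $t = \alpha\beta$ in $S$. The proof uses that every element of the rectangular band $S^k$ is idempotent: multiplying $t$ on the left by $\alpha$ gives $\alpha \cdot t = \alpha^2 \cdot (s_{k+1} \cdots s_m) = \alpha \cdot (s_{k+1} \cdots s_m) = t$, and symmetrically $t \cdot \beta = t$. Passing to the coordinate description $S^k \cong L \times R$ with $(a,b)(c,d) = (a,d)$, these two identities force the left coordinate of $t$ to equal that of $\alpha$ and the right coordinate to equal that of $\beta$; but these are exactly the coordinates of $\alpha\beta$, so $\alpha\beta = t$ in $S^k$, and hence in $S$. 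In the remaining subcase $k \leq m < 2k$ the two blocks overlap, but then $t$ is already a product of at most $2k - 1$ generators and no rewrite is needed.

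In every case $t$ is written as a product of at most $2k$ generators from $\Sigma$, and the straight-line program simply loads $s_1$ into register one and then, for each subsequent generator of the product, loads it into register two and multiplies into register one. This yields width~$2$ and length at most $4k - 1 \leq 6k - 3$. The main obstacle is the rewrite $t = \alpha\beta$: the subtlety is that the middle factor $s_{k+1} \cdots s_{m-k}$ need not itself lie in $S^k$, so the rectangular-band identity $xyz \approx xz$ cannot be applied directly; the trick is to instead use idempotence of $\alpha$ and $\beta$ to pin down the $S^k$-coordinates of $t$.
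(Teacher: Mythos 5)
Your proof is correct. The paper omits its own argument for this lemma (the text states that the ``direct argument'' is ``straight-forward and, therefore, omitted''), so there is no paper proof to compare against; your construction is a good candidate for what was intended.

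Two small observations. First, the central rewrite $t = \alpha\beta$ can be seen even more directly without passing to the $L \times R$ coordinate model: once you have $\alpha t = t$ and $t\beta = t$, simply note that $\alpha$, $t$, $\beta$ all lie in the rectangular band $S^k$, so the identity $xyz \approx xz$ gives $\alpha\beta = \alpha t\beta = (\alpha t)\beta = t\beta = t$. Your coordinate-based version is equally valid. Second, your argument in fact establishes the sharper bound $C^2_{\pv{V}}(N) \leq 4k - 1$: in every case $t$ is a product of at most $2k$ generators, which a two-register straight-line program computes in at most $1 + 2(2k-1) = 4k - 1$ instructions. Since $4k - 1 \leq 6k - 3$ for all $k \geq 1$ (with equality at $k = 1$), this implies the stated bound. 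Your identification of the subtlety --- that the middle factor $s_{k+1}\cdots s_{m-k}$ need not lie in $S^k$, so $xyz \approx xz$ cannot be applied to the three-block decomposition directly --- and the workaround via idempotency of $\alpha$ and $\beta$ are exactly right.
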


Fleischer showed that the pseudovariety $\pvL \pv{I}_k = \pvI{x_1 \cdots x_k y z_1 \cdots z_k \approx x_1 \cdots x_k z_1 \cdots z_k}$ has bounded diameter for all $k \geq 1$ \cite[Proposition~4.5]{Fleischer19diss}. 
However, since $\pvL \pv{I}_k \sse \pv{RB} \pvM \pv{N}_{2k} \sse \pvL \pv{I}_{3k}$ holds for every $k \geq 1$, this is equivalent to the above.
As it turns out, every pseudovariety of bounded diameter is contained in $\pv{RB} \pvM \pv{N}_k$ or, equivalently, in $\pvL \pv{I}_k$ for some $k \geq 1$.

\begin{theorem}\label{thm:bounded-diameter}
  Let $\pv{V}$ be a pseudovariety.
  Then exactly one of the following holds.
  \begin{itemize}
    \item The pseudovariety $\pv{V}$ has bounded diameter.
      In particular, $C^2_{\pv{V}}(N) \in \mathcal{O}(1)$.
    \item The pseudovariety $\pv{V}$ requires straight-line programs of length $\Omega(\log N)$.
  \end{itemize}
  Moreover, the former is the case if and only if $\pv{V} \subseteq \pv{RB} \pvM \pv{N}_k$ for some $k \geq 1$.
\end{theorem}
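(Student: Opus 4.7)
The plan is to prove both parts of the theorem together via contraposition. The easy direction---that $\pv{V} \sse \pv{RB} \pvM \pv{N}_k$ implies bounded diameter and, a fortiori, $C^2_{\pv{V}}(N) \in \mathcal{O}(1)$---is immediate from \cref{lem:rectangular-by-nilpotent}, which yields $C^2_\pv{V}(N) \leq 6k - 3$. What remains is to show that if $\pv{V} \not\sse \pv{RB} \pvM \pv{N}_k$ for any $k \geq 1$, then $C^\infty_{\pv{V}}(N) \in \Omega(\log N)$. This simultaneously rules out bounded diameter (a constant SLP-length bound forces a constant product-length bound) and establishes the dichotomy.

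To exploit the hypothesis, I would invoke the inclusion chain $\pvL \pv{I}_k \sse \pv{RB} \pvM \pv{N}_{2k} \sse \pvL \pv{I}_{3k}$ noted in the excerpt to restate it as follows: for every $n \geq 1$ there is some finite $S_n \in \pv{V}$ violating the identity $x_1 \cdots x_n y z_1 \cdots z_n \approx x_1 \cdots x_n z_1 \cdots z_n$. From each such $S_n$ I would construct a subsemigroup or homomorphic image $T_n \in \pv{V}$ of cardinality at most exponential in $n$, together with a generating set $\Sigma_n$ of size $\Omega(n)$ and a distinguished element $t_n$ such that no proper subset of $\Sigma_n$ generates $t_n$. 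Given such a family, every straight-line program computing $t_n$ over $\Sigma_n$ must contain at least one load instruction per generator, so $c^\infty_{T_n}(t_n; \Sigma_n) \geq |\Sigma_n| \in \Omega(n)$; combined with $\log |T_n| \in \mathcal{O}(n)$, this gives $C^\infty_{\pv{V}}(N) \in \Omega(\log N)$ as required.

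The construction of $(T_n, \Sigma_n, t_n)$ is modelled on Fleischer's witness families in the proof of \cref{lem:obstructions}: one starts with the free pro-$\pv{V}$ semigroup on generators indexed by the positions of a product of length $\Theta(n)$ and mods out by a congruence collapsing all sufficiently-separated factors, while leaving the distinguished product $a_1 \cdots a_n b c_1 \cdots c_n$ rigid with respect to omissions. The failure of the identity defining $\pvL \pv{I}_n$ in $\pv{V}$ is precisely what supplies the distinctions needed: intuitively, it certifies that every generator of the distinguished product functions as an ``indispensable middle factor''.

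The main obstacle is arranging $T_n$ so that \emph{both} its cardinality grows at most exponentially in $n$ \emph{and} the rigidity of $t_n$ under any single-generator omission persists, uniformly in $n$. If $\pv{V}$ contains a nontrivial group, then the direct power $(\mathbb{Z}_p)^n$ for a prime $p$ dividing some group order in $\pv{V}$ immediately supplies such a family (of size $p^n$, with standard basis as $\Sigma_n$ and the all-ones vector as $t_n$), so the remaining case is that of aperiodic pseudovarieties not contained in any $\pv{RB} \pvM \pv{N}_k$. These I would handle by adapting Fleischer's band-theoretic construction in conjunction with the identity-violation hypothesis, possibly after splitting further along structural lines (e.g.\ presence of a nontrivial band or a monogenic subsemigroup of growing index).
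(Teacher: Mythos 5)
There is a genuine gap in the hard direction, centred exactly on what you flagged as the main obstacle.

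Your proposed split, nontrivial group versus aperiodic, is not the decomposition the proof needs. The paper splits on a different axis: whether $\pv{V} \sse \pvL\pv{I}$ or not, i.e.\ whether $\pv{V}$ is locally trivial. If $\pv{V} \not\sse \pvL\pv{I}$, then $\pv{V}$ contains some nontrivial \emph{monoid} $M$ (not necessarily a group; it could be the two-element semilattice). Taking the subsemigroup of $M^{\times n}$ generated by the $n$ elements that are $s \neq e$ in one coordinate and $e$ elsewhere, you get a semigroup of size $\leq |M|^n$ whose distinguished product needs all $n$ generators; this is \cref{lem:locally-trivial} and it already subsumes your $(\mathbb{Z}_p)^n$ construction. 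The aperiodic-monoid case (e.g.\ pseudovarieties containing a semilattice) is handled by the same argument, so you should not be sending it to the unresolved "band-theoretic" branch.

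What actually remains after this is the locally trivial case $\pv{V} \sse \pvL\pv{I}$, and this is where your plan has no concrete mechanism. The paper resolves it with a structural theorem of Almeida and Reilly: for $\pv{V} \sse \pvL\pv{I} = \pv{RB}\pvM\pv{N}$, one has $\pv{V} \sse \pv{RB}\pvM\pv{N}_k$ if and only if $\pv{V} \cap \pv{N} \sse \pv{N}_k$, and the latter holds for some $k$ if and only if $\pv{U} \not\sse \pv{V}$, where $\pv{U} = \pvI{xy \approx yx,\; x^2 \approx 0}$. So within the locally trivial world the single obstruction is $\pv{U}$, and the paper supplies the matching witness family (\cref{lem:tcom}) using the commutative nilpotent semigroup generated by $s_1, \dots, s_n$ with all squares equal to $0$, which has $2^n$ elements and a product requiring all generators. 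Your plan of working directly from "there is $S_n$ violating $\pvL\pv{I}_n$'s identity" and constructing $T_n$ by quotienting a free pro-$\pv{V}$ object is not wrong in spirit, but without the Almeida--Reilly reduction to $\pv{U}$ you have no control over why a size-$2^{O(n)}$ quotient with the required rigidity should exist; "adapting Fleischer's band-theoretic construction" is not applicable here either, since bands are idempotent and hence are never in $\pvL\pv{I}$ unless rectangular, whereas the genuine obstruction $\pv{U}$ lives inside $\pv{N}$. In short: replace the group/aperiodic split with the $\pvL\pv{I}$/non-$\pvL\pv{I}$ split, use the nontrivial-monoid construction for the latter, and invoke Almeida--Reilly plus the $\pv{U}$ witness for the former; that also directly yields the "Moreover" clause, which your sketch does not separately address.
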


Before we prove \cref{thm:bounded-diameter} in its full generality, let us first establish the lower bound asserted by the second alternative.
We will distinguish between two cases, depending on how the pseudovariety in question is situated relative to pseudovarieties $\pvL\pv{I}$ and $\pv{U}$.

The first case concerns the pseudovariety $\pvL\pv{I} = \pvI{x^\omega y x^\omega \approx x^\omega}$ of \emph{locally trivial} semigroups, that is, semigroups having only trivial submonoids.
By \cite[Exercise~6.4.2]{Almeida1994}, it satisfies
\[
  \pvL\pv{I} = \pv{RB} \pvM \pv{N} = \bigcup_{k \geq 1} \pv{RB}\pvM\pv{N}_k = \bigcup_{k \geq 1} \pvL\pv{I}_k.
\]

The following observation is also contained in Fleischer's dissertation~\cite[Proposition~4.7]{Fleischer19diss}.

\begin{lemma}[Fleischer]\label{lem:locally-trivial}
  Let $\pv{V}$ be a pseudovariety with $\pv{V} \not\sse \pvL\pv{I}$. 
  Then $C^\infty_{\pv{V}}(N) \in \Omega(\log N)$.
\end{lemma}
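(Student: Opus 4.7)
The plan is to leverage closure of $\pv{V}$ under direct products to exhibit, for each $n \geq 1$, a semigroup $S_n \in \pv{V}$ of size at least $2^n$ containing an element whose every straight-line program must execute $\Omega(n)$ instructions. Setting $N = \abs{S_n}$, this immediately yields $C^\infty_\pv{V}(N) \in \Omega(\log N)$.

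First, I would extract a nontrivial local submonoid from the hypothesis. Since $\pv{V} \not\sse \pvL\pv{I}$, the defining identity $x^\omega y x^\omega \approx x^\omega$ fails in some $S \in \pv{V}$: there exist $x_0, y_0 \in S$ such that the idempotent $e = x_0^\omega$ satisfies $z \coloneqq e y_0 e \neq e$. The local submonoid $M = eSe$ is a subsemigroup of $S$ containing both its identity $e$ and the nonidentity element $z$, and closure of $\pv{V}$ under subsemigroups yields $M \in \pv{V}$.

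I would then take direct powers. For each $n \geq 1$, $M^n \in \pv{V}$ and $\abs{M^n} = \abs{M}^n \geq 2^n$. Consider the generators $g_i \in M^n$ with $z$ in coordinate $i$ and $e$ in all other coordinates, together with the target $t = (z, z, \dots, z) \in M^n$. Since $e$ is the identity of $M$, we have $g_1 g_2 \cdots g_n = t$, so $t$ lies in the subsemigroup generated by $\Sigma = \{g_1, \dots, g_n\}$.

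The crucial step is a coordinatewise lower bound. Any nonempty product $g_{i_1} \cdots g_{i_k}$ evaluates, in coordinate $j$, to $z^{c_j}$, where $c_j = \abs{\{\ell : i_\ell = j\}}$. For this to equal $t$ we need $z^{c_j} = z$ for every $j$; but $c_j = 0$ would give the identity $e \neq z$, so $c_j \geq 1$ for every $j$. Consequently any straight-line program computing $t$ must reference each $g_j$ at least once, and hence contain an assignment instruction $r_k \gets g_j$ for every $j$, forcing $\ell \geq n$. The main technical obstacle is precisely this generator-counting argument: one must verify that arbitrary reuse of intermediate registers and repeated squaring cannot circumvent the need for a load instruction per distinct generator whose unfolded product yields $t$. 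Combined with $\abs{M^n} \geq 2^n$, the bound $n \in \Omega(\log \abs{M^n})$ delivers $c^\infty_{M^n}(t; \Sigma) \geq n \in \Omega(\log N)$, as required.
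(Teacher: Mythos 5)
Your proof is correct and takes essentially the same approach as the paper: both extract a nontrivial monoid $M \in \pv{V}$ (yours explicitly as the local submonoid $eSe$) and lower-bound the cost of the diagonal element $(z,\dots,z)$ in a Cartesian power of $M$ by arguing that every coordinate generator must appear; the paper works inside $\langle\Sigma\rangle \leq M^{\times n}$ rather than all of $M^n$, but this is immaterial. The step you flag as ``the main technical obstacle'' is actually immediate: a trivial induction on the instruction sequence shows $V(\mathcal{A})$ lies in the subsemigroup generated by the elements appearing in load instructions, so any program omitting some $g_j$ can only produce elements whose $j$-th coordinate is $e$.
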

\begin{proof}
  Let $M$ be a nontrivial monoid with $M \in \pv{V}$.
  Further, let $e \in M$ be its neutral element, and let $s \in M$ with $s \neq e$.
  Let us consider the subsemigroup $S$ of the Cartesian power $M^{\times n}$ that is generated by $\Sigma = \{s_1, \dots, s_n\}$ where $s_i$ has $i$th coordinate $s$ and all other coordinates equal to $e$.
  Clearly, $\log \abs{S} \in \Theta(n)$.
  Since the element $t = s_1 \cdots s_n \in S$ is not contained in the subsemigroup generated by any proper subset of $\Sigma$, it follows that every generator must appear in every straight-line program computing $t$ over $\Sigma$; hence, $c^\infty_S(t; \Sigma) \in \Omega(\log \abs{S})$.
\end{proof}

The second case concerns the pseudovariety $\pv{U} = \pv{T} \cap \pv{Com} = \pvI{xy \approx yx, x^2 \approx 0}$, which serves as a direct obstruction to the property of having bounded diameter.

\begin{lemma}\label{lem:tcom}
  Let $\pv{V}$ be a pseudovariety with $\pv{U} \sse \pv{V}$. 
  Then $C^\infty_{\pv{V}}(N) \in \Omega(\log N)$.
\end{lemma}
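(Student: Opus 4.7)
The plan is to take, for each $n \geq 1$, the free pro-$\pv{U}$ semigroup $S_n = \Omega_n(\pv{U})$ on generators $\Sigma = \{s_1, \dots, s_n\}$, which belongs to $\pv{V}$ by hypothesis. The key observation is that the identities $xy \approx yx$ and $x^2 \approx 0$ together force every product of generators containing a repeated letter to equal zero, while injective products depend only on the set of letters used. Consequently, $S_n$ consists of an absorbing zero together with the nonempty subsets of $\Sigma$ under disjoint union (and zero otherwise), giving $|S_n| = 2^n$.

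Next, I would set $t_n = s_1 s_2 \cdots s_n \in S_n$, corresponding to the full set $\{1, \dots, n\}$. For any proper subset $\Sigma' \subsetneq \Sigma$, every element of $\gen[S_n]{\Sigma'}$ is either zero or a nonempty subset missing some index; in particular, $t_n \notin \gen[S_n]{\Sigma'}$. Thus, mirroring the argument of \cref{lem:obstructions}, any straight-line program computing $t_n$ over $\Sigma$ must include an assignment instruction $r_k \gets s_i$ for each $i \in \{1, \dots, n\}$, so its length is at least $n$. Taking $n = \lfloor \log_2 N \rfloor$ ensures $|S_n| \leq N$ and $c^\infty_{S_n}(t_n; \Sigma) \geq n \in \Omega(\log N)$, which establishes the desired bound on $C^\infty_{\pv{V}}(N)$.

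The construction is a direct commutative analogue of Fleischer's obstruction for $\pv{T}$ in \cref{lem:obstructions}, but yields only an $\Omega(\log N)$ rather than $\Omega(\sqrt{N})$ lower bound: commutativity collapses $\Omega_n(\pv{U})$ from (essentially) factorially many injective words to merely $2^n$ subsets, so $n$ is logarithmic rather than polynomial in $|S_n|$. I do not anticipate any significant obstacle beyond verifying the explicit description of $\Omega_n(\pv{U})$, which follows routinely by normalizing any word of generators via commutativity and then applying $x^2 \approx 0$ to any repeated letter.
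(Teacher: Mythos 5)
Your proposal is correct and follows essentially the same route as the paper: the paper's explicit presentation $\gen{s_1,\dots,s_n : s_is_j = s_js_i,\ s_i^2 s_j = s_i^2}$ is precisely your $\Omega_n(\pv{U})$ (zero plus nonempty subsets, $2^n$ elements), and both arguments conclude by noting that $t = s_1\cdots s_n$ lies outside the subsemigroup generated by any proper subset of $\Sigma$, forcing every generator into the program.
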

\begin{proof}
  For every $n \geq 1$, the semigroup $S = \gen{ s_1, \dots, s_n : s_is_j = s_js_i,\, s_i^2s_j = s_i^2 \; (1 \leq i,j \leq k)}$ is contained in $\pv{U} \sse \pv{V}$.
  Indeed, the defining relations imply $\smash{s_i^2 = s_i^2s_j^2 = s_j^2}$, so that $s^2$ is the zero element of $S$ for every $s \in S$.
  The semigroup $S$ contains exactly $2^n$ elements, namely the zero element and every product of distinct elements from $\Sigma \coloneqq \{ s_1, \dots, s_n \}$.

  Clearly, the element $t = s_1 \cdots s_n \in S$ is not contained in the subsemigroup generated by any proper subset of $\Sigma$.
  As in the proof of \cref{lem:locally-trivial}, this implies $c^\infty_S(t;\Sigma) \in \Omega(\log \abs{S})$.
\end{proof}

\begin{proof}[Proof of \cref{thm:bounded-diameter}]
  In view of \cref{lem:rectangular-by-nilpotent} as well as \cref{lem:locally-trivial,lem:tcom}, it suffices to show that if $\pv{V} \sse \pvL\pv{I}$ and $\pv{U} \not\sse \pv{V}$, then $\pv{V} \sse \pv{RB}\pvM\pv{N}_k$ for some $k \geq 1$.
  To this end, let us first note that $\pvL\pv{I} = \pv{RB} \pvM \pv{N}$; see \cite[Exercise~6.4.2]{Almeida1994}.
  Assuming $\pv{V} \sse \pvL\pv{I}$, we therefore have that $\pv{V} \sse \pv{RB}\pvM\pv{N}_k$ if and only if $\pv{V} \cap \pv{N} \sse \pv{N}_k$.
  Almeida and Reilly \cite[Proposition~4.4]{AlmeidaReilly1984} have shown that the latter is equivalent to the condition $\pv{U}_{k+1} \not\sse \pv{V} \cap \pv{N}$ where $\pv{U}_{k+1} = \pv{U} \cap \pv{N}_{k+1}$.
  Since $\bigcup_{k \geq 1} \pv{U}_k = \pv{U} \sse \pv{N}$, this condition holds for some $k \geq 1$ provided that $\pv{U} \not\sse \pv{V}$.
\end{proof}

\section{Permutative Semigroups}\label{sec:permutative}

In this section, we examine efficient compression in pseudovarieties where every member satisfies a \emph{common} permutation identity -- that is, an identity $x_1 \cdots x_n \approx x_{\sigma(1)} \cdots x_{\sigma(n)}$ where $\sigma \in \mathrm{Sym}(n)$ is a nontrivial permutation of the symbols $1, \dots, n$.
We refer to such pseudovarieties, and their members, as \emph{permutative}.\footnote{Be aware that some authors define a permutative pseudovariety as a pseudovariety consisting of permutative semigroups, meaning that the permutation identity may differ between its members.}
This notion generalizes commutativity, which corresponds to the identity $xy \approx yx$, and is equivalent to it for monoids.

\begin{proposition}\label{pro:permutative}
  Let $\pv{V}$ be a permutative pseudovariety.
  Then $C_{\pv{V}}^2(N) \in \mathcal{O}(\log N)$.
\end{proposition}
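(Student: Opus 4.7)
The plan is to combine two ingredients: a \emph{middle-commutativity} property derived from the defining permutation identity of $\pv{V}$, and a sharpening of Fleischer's compression argument for commutative semigroups tailored to width two. Starting from an identity $x_1 \cdots x_n \approx x_{\sigma(1)} \cdots x_{\sigma(n)}$ with $\sigma \neq \mathrm{id}$, one derives (by iterated substitution and specialization of variables, a classical manipulation) the existence of constants $k, l \geq 0$ such that every $S \in \pv{V}$ satisfies the middle-swap identity
\[
  x_1 \cdots x_k \, y \, z \, y_1 \cdots y_l \approx x_1 \cdots x_k \, z \, y \, y_1 \cdots y_l,
\]
so that the factors in positions $k+1, \ldots, n-l$ of any product of length $n \geq k+l+2$ may be arbitrarily rearranged by iterated adjacent swaps.

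Now fix $t \in S \in \pv{V}$ with $\abs{S} \leq N$ and $\Sigma \sse S$. Take a shortest expression $t = s_1 \cdots s_n$ over $\Sigma$; by the standard partial-product pigeonhole, $n \leq N$. If $n \leq k+l+1$, a sequential SLP produces $t$ in $\bigO(1)$ operations. Otherwise, split $t = u \cdot m \cdot v$ with $\abs{u} = k$, $\abs{v} = l$, and rewrite $m = a_1^{e_1} \cdots a_r^{e_r}$ using middle-permutability, where $a_1, \ldots, a_r \in \Sigma$ are distinct. The core combinatorial claim is that among all nonnegative tuples $(f) \in \mathbb{N}^r$ with $u \cdot a_1^{f_1} \cdots a_r^{f_r} \cdot v = t$, the lexicographically smallest solution $(e^*)$ satisfies $\prod_i (1 + e^*_i) \leq N$. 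This follows from pigeonhole applied to the map $\psi \colon (f) \mapsto a_1^{f_1} \cdots a_r^{f_r}$ evaluated in $S$ in some fixed order: since $\abs{\psi(\mathbb{N}^r)} \leq N$, any collision $\psi(f) = \psi(g)$ inside the box $\prod_i \{0, \ldots, e^*_i\}$ yields---via the element-wise substitution $\psi(f) \to \psi(g)$ combined with middle-permutability inside the sandwich $u(\cdot)v$---a lex-smaller preimage $(e^* - g + f)$ of $t$, contradicting lex-minimality. Consequently $r \leq \log_2 N$ and $\sum_i \log_2(1 + e^*_i) \leq \log_2 N$.

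To construct the SLP, decompose each $e^*_i = \sum_j b_{ij} 2^j$ and set $B_j = \prod_{i \,:\, b_{ij} = 1} a_i$; middle-commutativity then gives $\prod_i a_i^{e^*_i} = \prod_j B_j^{2^j}$, which is evaluated via the Horner recursion $Z_L = B_L$, $Z_{j-1} = Z_j^2 \cdot B_{j-1}$. The Horner loop runs entirely in register $r_1$, alternating $r_1 \gets r_1 \cdot r_1$ with the sequential absorption of each generator of $B_j$ through $r_2 \gets a_i$ followed by $r_1 \gets r_1 \cdot r_2$. This uses $\bigO\bigl(L + \sum_j \abs{B_j}\bigr) = \bigO(\log N)$ operations, since both $L$ and $\sum_j \abs{B_j}$ are bounded by $\log_2 N$. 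Finally, $u$ and $v$ are prepended and appended to $r_1$ one generator at a time, adding $\bigO(k+l) = \bigO(1)$ further operations. The principal obstacle is the pigeonhole-substitution step: the collision $\psi(f) = \psi(g)$ is only an element-wise equality in $S$, whereas rewriting the middle of $t$ requires middle-permutability to transport the corresponding subword into a position where the substitution applies---setting this up correctly, together with the classical but delicate derivation of the middle-swap identity from the general permutation identity, is where the argument requires care.
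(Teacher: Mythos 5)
Your proposal is correct and follows essentially the same route as the paper: the derivation of a middle-swap identity is Putcha and Yaqub's theorem (which the paper cites), the lexicographically minimal exponent tuple with the box-injectivity pigeonhole argument is the same key lemma (the paper tracks collisions of $\xi \mapsto u\ms s^\xi$ rather than of $\psi$ alone, which is only a cosmetic difference in who carries the sandwich context), and the width-two Horner/simultaneous-squaring evaluation is identical to the paper's construction of $\tilde{s} = (((\tilde{s}_r)^2\tilde{s}_{r-1})^2\cdots\tilde{s}_2)^2\tilde{s}_1$. The one imprecision worth noting is that the rewriting $m = a_1^{e_1}\cdots a_r^{e_r}$ and the equality $\prod_i a_i^{e^*_i} = \prod_j B_j^{2^j}$ hold only inside the sandwich $u(\cdot)v$, not as element-wise identities in $S$, but you flag this yourself and it does not affect the construction since $u$ and $v$ are re-attached at the end.
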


This improves upon an upper bound previously established by Fleischer~\cite[Lemma~4.9]{Fleischer19diss}, which shows that commutative semigroups admit straight-line programs of width three and length $\mathcal{O}(\smash{\log}^2 N)$.
In view of \cref{thm:bounded-diameter}, our bound is asymptotically optimal.

\begin{proof}
  According to Putcha and Yaqub~\cite[Theorem~1]{PutchaYaqub1971}, every permutation identity implies the identity $u_1 \cdots u_k xy v_1 \cdots v_k \approx u_1 \cdots u_k yx v_1 \cdots v_k$ for each sufficiently large $k \geq 0$.
  In the following, we fix some $k \geq 1$ such that $\pv{V}$ satisfies this identity.

  \smallskip

  Consider a semigroup $S \in \pv{V}$, a generating set $\Sigma \subseteq S$, and some element $t \in S$.
  Without loss of generality, the element $t$ is not a product of generators with $2k$ or fewer factors.
  Under these assumptions, the element $t$ can then be written as
  \begin{equation}\label{eqn:permutative_normal-form}
    t = u \, s_1^{\nu_1} \cdots\, s_n^{\nu_n} \, v
    \qquad
    \text{with } s_1, \dots, s_n \in \Sigma \text{, } u, v \in \Sigma^k \text{, and } \nu_1, \dots, \nu_n \geq 0.
  \end{equation}

  Fix the elements $s_1, \dots, s_n \in \Sigma$ and $u,v \in \Sigma^k$ involved in such an expression.
  Once these are fixed, we associate to each $\mu = (\mu_1, \dots, \mu_n) \in \mathbb{N}^n$ the element $s^\mu \coloneqq s_1^{\mu_1} \cdots\, s_n^{\mu_n} \in S \cup \{1\}$.
  We then also fix the \emph{lexicographically} minimal tuple of exponents $\nu \in \mathbb{N}^n$ such that $t = u\, s^\nu \, v$.

  Suppose given $\xi, \eta \in \mathbb{N}^n$ such that $\xi, \eta \leq \nu$ in the \emph{component-wise} order.
  We claim that, under these conditions, $u\, s^\xi = u\, s^\eta$ implies $\xi = \eta$.
  Indeed, if $\xi \leq_{\text{lex}} \eta$, then
  \[
    t = u\, s^\nu \, v = u\, s^{\eta} s^{\nu - \eta}\,v = u\, s^{\xi} s^{\nu - \eta}\, v = u\, s^{\nu - \eta + \xi}\, v
  \]
  and, clearly, $\nu - \eta + \xi \leq_{\text{lex}} \nu$; hence, $\xi = \eta$ by minimality of $\nu$.
  This establishes our claim, which implies $(\nu_1 + 1) \cdot \ldots \cdot (\nu_n + 1) \leq \abs{S}$ and, hence, $\log(\nu_1 + 1) + \ldots + \log(\nu_n + 1) \leq \log \abs{S}$.
  
  \smallskip

  Computing $t$ as in the expression~\eqref{eqn:permutative_normal-form} and using fast exponentiation yields a straight-line program of width three and length $\mathcal{O}(\log \abs{S})$ -- the latter being due to the above inequality.

  Limited to two registers, we instead proceed as follows.
  First, we use a simultaneous variant of fast exponentiation to compute an element $\tilde s \in S$ abstractly equivalent to $s^{\nu_1}_1 \cdots\, s^{\nu_n}_n$ under commutation.
  More precisely, let $r = \max_i \lceil\log (\nu_i + 1)\rceil$, and $\tilde s_1, \dotsc, \tilde s_r \in S \cup \{1\}$ with $\tilde s_i$ being the product (in some order) of those $s_j$ where the $i$th digit in the binary representation of $\nu_j$ equals $1$.
  Clearly, $\tilde s = (((\tilde s_r)^2 \cdot \tilde s_{r-1})^2 \cdot \ldots \cdot \tilde s_2 )^2 \cdot \tilde s_1 \in S$ can then be computed, as such, within $\mathcal{O}(\log \abs{S})$ instructions using only two registers.
  From there, we can obtain the element $t = u\, s_1^{\nu_1} \cdots\, s_n^{\nu_n} \, v = u \, \tilde s \, v$ using an additional $4k \in \mathcal{O}(1)$ instructions.
\end{proof}

Combining \cref{lem:obstructions} and \cref{pro:permutative}, we arrive at the following characterization for pseudovarieties of aperiodic semigroups, i.e., for subpseudovarieties of $\pv{A} = \pvI{x^{\omega+1} \approx x^{\omega}}$.

\begin{theorem}\label{thm:main-aperiodic}
  Let $\pv{V} \subseteq \pv{A}$ be a pseudovariety.
  The following are equivalent.
  \begin{bracketenumerate}
    \item The pseudovariety $\pv{V}$ affords efficient compression.
    \item The pseudovariety $\pv{V}$ contains neither $\pv{LRB}$, $\pv{RRB}$, nor $\pv{T}$.
    \item The pseudovariety $\pv{V}$ is permutative.
    \item The pseudovariety $\pv{V}$ admits straight-line programs of width two and length $\mathcal{O}(\log N)$.
  \end{bracketenumerate}
\end{theorem}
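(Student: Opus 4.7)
The plan is to prove the cyclic chain $(4) \Rightarrow (1) \Rightarrow (2) \Rightarrow (3) \Rightarrow (4)$. Three of these implications are essentially immediate from results already in hand: $(4) \Rightarrow (1)$ holds because $\mathcal{O}(\log N) \subseteq \mathcal{O}(\polylog N)$; $(1) \Rightarrow (2)$ is the contrapositive of \cref{lem:obstructions}, whose $\Omega(\sqrt N)$ lower bound rules out efficient compression as soon as one of $\pv{LRB}, \pv{RRB}, \pv{T}$ lies inside $\pv{V}$; and $(3) \Rightarrow (4)$ is exactly \cref{pro:permutative}.

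Only the implication $(2) \Rightarrow (3)$ requires substantive work, and here I would follow the strategy sketched in the introduction, specialized to the aperiodic case. First, I would invoke the first author's structure theorem~\cite{Thumm2025}: any pseudovariety $\pv{V}$ with $\pv{T} \not\sse \pv{V}$ is either almost completely regular or permutative. The permutative case is (3) directly, so it suffices to treat the almost completely regular case under the additional hypothesis $\pv{V} \sse \pv{A}$.

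In that situation, the aperiodic identity $x^{\omega+1} \approx x^\omega$ collapses the central factor $(x_i \cdots x_j)^{\omega+1}$ of the almost-CR identity into the idempotent $(x_i \cdots x_j)^\omega$, so that for every $S \in \pv{V}$ all sufficiently long products are idempotent; consequently, the ideal $S^k \leq S$ is a band for some uniform $k$. Rasin's Proposition~4 in \cite{Rasin81}, cited in the introductory outline, then says that the exclusion $\pv{LRB}, \pv{RRB} \not\sse \pv{V}$ forces this band to be a normal band of groups, and in the aperiodic setting groups are trivial, so $S^k$ is a normal band and $\pv{V} \sse \pv{NB} \pvM \pv{N}_k$. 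The proof is then concluded by the observation that $\pv{NB} \pvM \pv{N}_k$ is itself permutative: any product of length $n \geq 4k$ can be grouped as $X_1 X_2 X_3 X_4$ with each block $X_\ell \in S^k$, and inside the normal band $S^k$ the identity $uxyv \approx uyxv$ yields $X_1 X_2 X_3 X_4 \approx X_1 X_3 X_2 X_4$, which expands back to a nontrivial permutation identity on the original variables.

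The main obstacle I anticipate is justifying the reduction $\pv{V} \sse \pv{NB} \pvM \pv{N}_k$ in full generality: specifically, extracting a uniform nilpotency depth $k$ from the parameters of the almost-CR identity and verifying that Rasin's normal-band conclusion applies to all of $S^k$ rather than only its completely regular part. Both points should follow from aperiodicity combined with the almost-CR identity, but the bookkeeping needs care, in particular because the almost-CR identity in its raw form gives us only one distinguished idempotent factor per long product, while the reduction requires that every product of $k$ or more variables lie in the band.
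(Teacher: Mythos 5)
Your cyclic chain is sound in its outer structure, and $(4)\Rightarrow(1)$, $(1)\Rightarrow(2)$ (via \cref{lem:obstructions}), and $(3)\Rightarrow(4)$ (via \cref{pro:permutative}) are handled exactly as the paper does. The paper, however, disposes of $(2)\Rightarrow(3)$ by citing \cite[Corollary~16]{Thumm2025} directly; your attempt to re-derive it via a reduction to $\pv{NB}\pvM\pv{N}_k$ rests on a false intermediate claim.

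The gap: the aperiodic almost-CR identity
$x_1\cdots x_n \approx x_1\cdots x_{i-1}(x_i\cdots x_j)^{\omega}x_{j+1}\cdots x_n$
does \emph{not} make ``all sufficiently long products idempotent'' unless $i=1$ and $j=n$; it only lets you replace the interior block by an idempotent, not the whole product. A concrete counterexample is the three-element aperiodic semigroup $S=\{e,a,0\}$ with $e$ idempotent, $ea=a$, $ae=a^2=0$, and $0$ a zero: it satisfies $x_1x_2\approx x_1^{\omega+1}x_2$ (the almost-CR identity with $i=j=1$, $n=2$), and the pseudovariety $\langle S\rangle\subseteq\pv{A}$ it generates excludes $\pv{LRB}$, $\pv{RRB}$, and $\pv{T}$ (it satisfies $x_1x_2x_3\approx x_2x_1x_3$, which $\Omega_3(\pv{LRB})$, $\Omega_3(\pv{RRB})$, and $\Omega_3(\pv{T})$ all fail), yet $S^k=S$ for every $k\geq 1$ and $S$ is not a band since $a^2=0\neq a$. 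So your step ``$S^k$ is a band, hence Rasin's proposition applies and $\pv{V}\subseteq\pv{NB}\pvM\pv{N}_k$'' can outright fail under your hypotheses. (This $\pv{V}$ is in fact permutative, but via the other branch of \cref{thm:excluding-T}; your argument offers no reason why the aperiodicity plus $\pv{LRB},\pv{RRB},\pv{T}\not\sse\pv{V}$ couldn't leave behind a genuinely non-permutative almost-CR case, which is precisely the content of \cite[Corollary~16]{Thumm2025}.) The closing observation that $\pv{NB}\pvM\pv{N}_k$ is permutative is fine, but it never comes into play.
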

\begin{proof}
  See \cref{lem:obstructions} for $(1) \Rightarrow (2)$, \Cref{pro:permutative} entails $(3) \Rightarrow (4)$, and $(4) \Rightarrow (1)$ holds by definition.
  For the implication $(2) \Rightarrow (3)$ we refer to \cite[Corollary~16]{Thumm2025}.
\end{proof}

\section{Groups}\label{sec:groups}

Having just established our main theorem for the case of aperiodic semigroups (\cref{thm:main-aperiodic}), we now turn our attention to the pseudovarieties of groups.

\medskip

Most of our group theory notation follows Robinson's book \cite{Robinson96book}.
In particular, $N\trianglelefteq G$ denotes that $N$ is a normal subgroup of a group $G$. Furthermore, $g^h = h^{-1}gh$ denotes the conjugate of $g\in G$ by $h\in G$ and, for $\Delta, \Sigma \sse G$, we write $[\Delta, \Sigma]$ for the subgroup generated by all commutators $[g,h] = g^{-1}h^{-1}gh$ with $g \in \Delta$ and $h \in \Sigma$.
We differ from the notation in Robinson's book, however, by writing $\ncl[G]{\Sigma}$ to denote the normal subgroup generated by~$\Sigma$; that is, $\ncl[G]{\Sigma} = \gen[G]{ g^h : g \in \Sigma, h \in G}$.
Note that, since we only consider finite groups, the subgroup generated by a (nonempty) set $\Sigma$ coincides with the subsemigroup generated by it.

By Lagrange's theorem, we have the following well-known and straight-forward observation, which will be used in the following without further reference.
\begin{observation}\label{obs:log_gen_set}
  If $H \leq G$ with $H \neq G$, then $2 \cdot \abs{H} \leq \abs{G}$.
  In particular, for every $\Sigma \sse G$ there exists a subset $\Delta \sse \Sigma$ such that $\gen[G]{\Delta} = \gen[G]{\Sigma}$ and $\abs{\Delta} \leq \log \abs{\gen[G]{\Sigma}}$.
\end{observation}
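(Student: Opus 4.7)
The plan is to prove the two assertions separately, with the first serving as the key ingredient in the second.

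For the first assertion, I would simply invoke Lagrange's theorem. Since $H$ is a proper subgroup of the finite group $G$, the index $[G : H]$ is at least $2$, and therefore $\abs{G} = [G : H] \cdot \abs{H} \geq 2 \cdot \abs{H}$.

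For the second assertion, I would construct $\Delta$ by a greedy procedure. Initialize $\Delta \coloneqq \emptyset$, enumerate the elements of $\Sigma$ in some arbitrary order, and for each $g \in \Sigma$ append $g$ to $\Delta$ if and only if $g \notin \gen[G]{\Delta}$ at that point. By construction, every element of $\Sigma$ lies in $\gen[G]{\Delta}$ at the end of the procedure, so that $\gen[G]{\Sigma} \sse \gen[G]{\Delta}$; the reverse inclusion is immediate from $\Delta \sse \Sigma$. Hence $\gen[G]{\Delta} = \gen[G]{\Sigma}$.

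It remains to bound $\abs{\Delta}$. Each time an element is appended to $\Delta$, the intermediate subgroup $\gen[G]{\Delta}$ strictly enlarges; by the first assertion, its cardinality then at least doubles. Starting from the trivial subgroup $\gen[G]{\emptyset} = \{e\}$ of size $1$, after $k$ appendings the subgroup has size at least $2^k$. Thus $2^{\abs{\Delta}} \leq \abs{\gen[G]{\Delta}} = \abs{\gen[G]{\Sigma}}$, which gives $\abs{\Delta} \leq \log \abs{\gen[G]{\Sigma}}$ as required. There is no real obstacle here; the only mild subtlety is the handling of the empty base case, which is transparently covered by interpreting $\gen[G]{\emptyset}$ as the trivial subgroup of $G$.
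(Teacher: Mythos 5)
Your proof is correct and is exactly the standard argument the paper has in mind: the first claim is precisely Lagrange's theorem, and the second follows from the greedy doubling argument you sketch. The paper itself gives no proof (it introduces the observation with the line ``By Lagrange's theorem, we have the following well-known and straight-forward observation''), so your write-up is simply an explicit filling-in of the details the authors left to the reader. Your aside about interpreting $\gen[G]{\emptyset}$ as the trivial subgroup (rather than as the empty subsemigroup, which the paper's general $\gen[S]{\cdot}$ notation would suggest) is the right call here and is the only point that warrants a comment at all.
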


\subsection{General Groups}

When considering a group $G$, we allow for the formation inverses in straight-line programs; that is, a \emph{group straight-line program} over $\Sigma \sse G$ may use the following instruction types.
\begin{samepage}
\smallskip
\begin{itemize}
  \addtolength{\itemsep}{.5pt}
  \newcommand{\instruction}[1]{\setlength\fboxsep{5pt}\colorbox{lipicsLightGray}{\makebox[10em][l]{\;$\vphantom{x}\smash{#1}$\hfill}}}
  \item Assign the fixed element $g \in \Sigma$ to the register $r_k$. 
    \hfill\instruction{r_k \gets g}
  \item Assign the product of the registers $r_i, r_j$ to the register $r_k$. 
    \hfill\instruction{r_k \gets r_i \cdot r_j}
  \item Assign the inverse of the registers $r_i$ to the register $r_k$. 
    \hfill\instruction{r_k \gets r_i^{-1}}
\end{itemize}
\smallskip
\end{samepage}

This modification does not increase the expressiveness for finite groups compared to ordinary (semigroup) straight-line programs, since one can compute the inverse of a group element $g$ as $g^{-1} = g^{\omega - 1}$.
The following observation shows that the decrease in cost afforded by the extra instruction type is often negligible as well.
Therein, and throughout this section, the \emph{group straight-line costs}, which are defined just as in \cref{sec:compression}, are marked with a tilde.

\begin{lemma}\label{lem:inverses_SLP}
  Let $\pv{H} \sse \pv{G}$ be a pseudovariety of groups.
  Then, for all $2 \leq w \leq \infty$, 
  \vspace{-1ex}
  \[
    C^\infty_{\pv{H}}(N) \leq 2 \cdot \tilde C^\infty_{\pv{H}}(N) + \mathcal{O}(\log N)
    \quad\text{and}\quad
    C^{w+1}_{\pv{H}}(N) \leq \tilde C^w_{\pv{H}}(N) \cdot \mathcal{O}(\log N).
  \]
\end{lemma}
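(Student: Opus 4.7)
For the first bound, I would translate a group straight-line program into a semigroup one by a doubled simulation. Using \cref{obs:log_gen_set} I first replace $\Sigma$ by a subset $\Delta \sse \Sigma$ generating the same subgroup of $G$ and satisfying $\abs\Delta \leq \log N$; since any SLP over $\Delta$ is also one over $\Sigma$, this does not increase the cost. Given an optimal group SLP of length $\tilde\ell$ over $\Delta$ (which, working in unbounded width, I may assume writes each register at most once), I would construct a semigroup SLP over the augmented generating set $\Delta \cup \Delta^{-1} \cup \{e\}$ that associates to each original register $r_k$ a pair $r_k^+, r_k^-$ satisfying $r_k^- = (r_k^+)^{-1}$: a multiplication $r_k \gets r_i r_j$ unfolds into two semigroup instructions using $(r_i r_j)^{-1} = r_j^{-1} r_i^{-1}$; a generator assignment $r_k \gets d$ is paired with $r_k^- \gets d^{-1}$; and an inversion $r_k \gets r_i^{-1}$ becomes $r_k^+ \gets r_i^- \cdot e$ together with $r_k^- \gets r_i^+ \cdot e$, swapping the two registers via the identity $e$. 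This accounts for $2\tilde\ell$ instructions. The augmented generators are then realized from $\Delta$ via \cref{lem:augmented-generating-set}: the identity $e = d^{\abs G}$ comes from fast exponentiation in $\bigO(\log N)$ steps, and $\Delta^{-1}$ is produced by a non-abelian batch-inversion trick adapted from Montgomery (form the partial products $b_i = d_1 \cdots d_i$, invert $b_k$ once via fast exponentiation, and unwind via $b_{i-1}^{-1} = d_i \cdot b_i^{-1}$ and $d_i^{-1} = b_i^{-1} \cdot b_{i-1}$) in $\bigO(\abs\Delta + \log N) = \bigO(\log N)$ further steps. Together this yields the bound $2\tilde\ell + \bigO(\log N)$.

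For the second bound, I would leave every generator and multiplication instruction of the group SLP unchanged and expand each inversion $r_k \gets r_i^{-1}$ into a fast-exponentiation block computing $r_i^{\abs G - 1}$ in $\bigO(\log N)$ semigroup instructions. A single auxiliary register suffices: when $k \neq i$, the accumulator can be $r_k$ itself with $r_i$ kept read-only as the base; in the case $k = i$, one first computes $r_i^{\abs G - 2}$ into the auxiliary register (with $r_i$ still read-only) and finishes with $r_i \gets r_\text{aux} \cdot r_i$. The resulting semigroup program has length $\tilde\ell \cdot \bigO(\log N)$ and width $w+1$.

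The main technical hurdle is keeping the additive overhead in the first bound at $\bigO(\log N)$. A generator-by-generator inversion via fast exponentiation would cost $\bigO(\abs\Delta \cdot \log N) = \bigO(\log^2 N)$, which would spoil the estimate. The Montgomery-style batch trick bypasses this by performing only one genuine inversion (on the accumulated product $b_k$) together with $\bigO(\abs\Delta)$ auxiliary multiplications that propagate the inverse through the entire collection, fitting within the prescribed logarithmic budget once the prior reduction to $\abs\Delta \leq \log N$ is in place.
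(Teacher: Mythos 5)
Your proof is correct and follows essentially the same two-pronged strategy as the paper: a doubled-register simulation maintaining a running inverse alongside each register, combined with a batch inversion of the generating set in $\mathcal{O}(\log N)$ steps after a Lagrange reduction to at most $\log N$ generators for the unbounded-width bound, and on-the-fly fast exponentiation of each inversion using one auxiliary register for the bounded-width bound. The minor differences---routing inversions through an identity multiplication where the paper instead renames registers, and your Montgomery-style prefix unwinding versus the paper's prefix--suffix sandwich $g_i^{-1} = k_{i+1}\,\bar g\, h_{i-1}$---are cosmetic.
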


\begin{proof}
  The second inequality arises from the on-the-fly computation of inverses via fast exponentiation.
  It is a consequence of the second inequality in \cref{lem:augmented-generating-set} and \cref{obs:exponentiation}.

  To see the first inequality, let us fix a group $G \in \pv{H}$, a generating set $\Sigma \sse G$, and an element $t \in G$.
  Let $\tilde{\mathcal{A}}$ be a group straight-line program computing $t$ over $\Sigma$ operating on the set of register $R = \{r_1, r_2, \dots\}$.
  Further, let $\bar R = \{ \bar r_1, \bar r_2, \dots \}$ be a disjoint copy of $R$.
  We modify the straight-line program $\tilde{\mathcal{A}}$ as follows.
  For each instruction $r_k \gets g$ or $r_k \gets r_i \cdot r_j$, we add an additional instruction $\bar r_k \gets g^{-1}$ or $\bar r_k \gets \bar r_j \cdot \bar r_i$ immediately after it.
  For each instruction $r_k \gets r_i^{-1}$, we would now like to either assign $r_k \gets \bar r_i$ and $\bar r_k \gets r_i$, or swap the registers $r_k$ and $\bar r_k$ in case $k = i$.
  While such instructions are not permitted in our definition of straight-line programs, they can be emulated by renaming registers in all subsequent instructions.
  Once this is done, we obtain an ordinary straight-line program $\mathcal{A}$ computing $t$ over $\Sigma \cup \Sigma^{-1}$ and operating on $R \cup \bar{R}$.
  It satisfies $w(\mathcal{A}) = 2 \cdot w(\tilde{\mathcal{A}})$ and $\ell(\mathcal{A}) \leq 2 \cdot \ell(\tilde{\mathcal{A}})$.

  The above shows that $c^{2w}_G(t; \Sigma \cup \Sigma^{-1}) \leq 2 \cdot \tilde c^w_G(t; \Sigma)$.
  In light of this inequality, it suffices to show that the set $\Sigma^{-1}$ can be computed efficiently from $\Sigma$.
  Since we did not assert bounds for specific generating sets, but only for $C^\infty_G$, we may assume, without loss of generality, that $\Sigma$ is an inclusion-minimal generating set.
  Then $\abs{\Sigma} \leq \log \abs{G}$ holds by Lagrange's theorem; see \cref{obs:log_gen_set}.
  We claim that, under these conditions, $c^\infty_G(\Sigma^{-1}; \Sigma) \in \mathcal{O}(\log \abs{G})$.

  Let $\Sigma = \{g_1, \dotsc, g_n\}$.
  A straight-line program computing $\Sigma^{-1}$ can proceed as follows.
  First, produce every generator $g_i$ (by assigning them to distinct registers).
  Then compute the successive products $h_i = g_1 \cdots g_i$ and $k_i = g_i \cdots g_n$ for all $1 \leq i \leq n$.
  Next, form the inverse $\bar g = (g_1 \cdots g_n)^{-1}$ of $h_n = k_n$ using fast exponentiation.
  Finally, extract the inverses of the generators $g_i$ as $g^{-1}_i = k_{i+1} \cdot \bar g \cdot h_{i-1}$ for $1 \leq i \leq n$ (omitting the factors $h_0$ and $k_{n+1}$ for $i = 1$ and $i = n$, respectively).
  Following this strategy, we obtain the required bound.
\end{proof}

Due to the above, the following well-known result by Babai and Szemer\'edi~\cite[Theorem~3.1]{BabaiS84}, which was originally stated for groups, can also be utilized in the semigroup setting.

\begin{rlemma}[Babai, Szemer\'edi]
  The pseudovariety $\pv{G} = \pvI{x^\omega \approx 1}$, which consists of all finite groups, admits straight-line programs of width\footnote{While Babai and Szemer\'edi \cite{BabaiS84} do not discuss width bounds, this follows easily from their proof.} $\mathcal{O}(\log N)$ and length $\mathcal{O}(\smash{\log}^2 N)$.
\end{rlemma}

Note that the bounds presented by Babai and Szemer\'edi are not necessarily optimal.
For instance, \cref{pro:permutative} implies that the pseudovariety $\GAb$, which consists of all finite Abelian groups, satisfies $C^2_{\GAb}(N) \in \mathcal{O}(\log N)$.
In the following, we present some techniques that might aid in future improvements of the bounds for general groups.
The techniques will then be applied in \cref{sub:solvable-groups} to obtain such improvements for solvable groups. 

\begin{definition}
  Let $G$ be a group.
  We call a generating set $\Sigma \sse G$ \emph{adapted} to a subnormal series\footnote{Recall that $G = G_0 \geq \dots \geq G_{n} = 1$ is a subnormal series if $G_{i}$ is normal in $G_{i-1}$ for all $1 \leq i \leq n$.} $G = G_0 \trianglerighteq \dots \trianglerighteq G_{n} = 1$ provided that $\Sigma \cap G_{i-1}$ generates $G_{i-1}$ for all $1 \leq i \leq n$.
\end{definition}

Moreover, we say that $\Sigma \sse G$ is a \emph{polycyclic} generating set if it is adapted to a subnormal series $G = G_0 \trianglerighteq \dots \trianglerighteq G_{n} = 1$ with $G_{i-1} / G_i$ cyclic for all $1 \leq i \leq n$.
Note that, as every generating set of an Abelian group is polycyclic, it follows that $\Sigma \sse G$ is a polycyclic generating set if and only if it is adapted to a subnormal series with Abelian quotients.

If a generating set $\Sigma \sse G$ is adapted to a subnormal series, then the associated straight-line costs $c^w_G(t;\Sigma)$ are essentially dominated by those in the consecutive quotients.

\begin{lemma}\label{lem:subnormal_SLP}
  Let $G$ be a group generated by a set $\Sigma \sse G$, and let $\pv{H} \sse \pv{G}$.\footnote{Here, we allow $\pv{H} \sse \pv{G}$ to be an \emph{arbitrary} class of finite groups.}
  Furthermore, suppose that $\Sigma$ is adapted to a subnormal series $G = G_0 \trianglerighteq \dots \trianglerighteq G_{n} = 1$ with $H_{i} \coloneqq G_{i-1} / G_{i}$ satisfying $H_i \in \pv{H}$ for all $1 \leq i \leq n$.
  Then, for all $t \in G$ and $2 \leq w \leq \infty$,
  \[
    c^{w+1}_G(t; \Sigma) \leq C^w_{\pv{H}}(\abs{H_1}) + \ldots + C^w_{\pv{H}}(\abs{H_{n}}) + n - 1.
  \]
  In particular, if $C^w_{\pv{H}}(N) \in \mathcal{O}(\log^c N)$ for some $c \geq 1$, then $c^{w+1}_G(t; \Sigma) \in \mathcal{O}(\log^c \abs{G})$.
\end{lemma}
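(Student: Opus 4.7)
The plan is to decompose $t$ along the subnormal series and build the SLP phase by phase. Using the adaptedness of $\Sigma$, I would define elements $t_1, \ldots, t_n$ with $t_i \in G_{i-1}$ and $t = t_1 \cdots t_n$ as follows. Setting $g_0 \coloneqq t$, proceed inductively: given $g_{i-1} \in G_{i-1}$, its coset $\bar{t}_i \coloneqq g_{i-1}G_i$ lies in $H_i = G_{i-1}/G_i$; choose any lift $t_i \in G_{i-1}$ of $\bar{t}_i$ and set $g_i \coloneqq t_i^{-1} g_{i-1} \in G_i$. After $n$ steps, $g_n \in G_n = \{1\}$, which unwinds to $t = t_1 \cdots t_n$.

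Since $\Sigma \cap G_{i-1}$ generates $G_{i-1}$, its image $\bar{\Sigma}_i$ in $H_i$ generates $H_i$, so by hypothesis there is an SLP in $H_i$ over $\bar{\Sigma}_i$ of width $w$ and length at most $C^w_{\pv{H}}(|H_i|)$ computing $\bar{t}_i$. Lifting this SLP to $G$ by replacing every occurrence of a generator in $\bar{\Sigma}_i$ with a fixed preimage in $\Sigma \cap G_{i-1}$ yields an SLP of the same width and length; I take $t_i$ to be the element it actually computes, which is necessarily some lift of $\bar{t}_i$, so the decomposition above applies. To combine these $n$ lifted SLPs, I would use one \emph{accumulator} register alongside $w$ \emph{scratch} registers. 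Phase~$1$ computes $t_1$ in some register, which is henceforth reserved as the accumulator; each subsequent phase~$i$ runs the lifted SLP for $t_i$ in the remaining $w$ registers (overwriting any residual values via its first instructions) and then issues a single multiplication that appends $t_i$ to the accumulator. The resulting combined SLP has width $w+1$ and length at most $\sum_{i=1}^n C^w_{\pv{H}}(|H_i|) + (n-1)$.

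For the asymptotic ``In particular'' clause, I would apply two elementary observations. Since $|H_1| \cdots |H_n| = |G|$ and $x \mapsto x^c$ is superadditive on $[0,\infty)$ for $c \geq 1$, we have $\sum_{i} \log^c|H_i| \leq \bigl(\sum_i \log|H_i|\bigr)^c = \log^c|G|$. Moreover, after discarding any trivial quotients (for which the corresponding phase is empty), each remaining $|H_i|$ is at least $2$, whence $n - 1 \leq \log_2|G|$. Both contributions to the length are therefore $\mathcal{O}(\log^c|G|)$. The main subtlety I expect lies in the width accounting: although each inner SLP is advertised with width $w$ and the accumulator persists across phases, register reuse across phases keeps the total count at $w+1$, precisely because each inner SLP overwrites all of its scratch registers before reading from them, while the accumulator is never touched by any inner SLP and only participates in the one appending multiplication at the end of each phase.
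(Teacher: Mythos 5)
Your proof follows essentially the same approach as the paper's: decompose $t = t_1 \cdots t_n$ along the subnormal series via lifts of quotient SLPs, combine them with an accumulator register to keep width $w+1$, and deduce the asymptotic bound from $\prod_i |H_i| = |G|$ and superadditivity of $x \mapsto x^c$. The width bookkeeping you flag as the main subtlety is handled correctly and agrees with the paper's construction.
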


Note that this yields yet another proof that the pseudovariety $\GAb$ of finite Abelian groups admits straight-line programs of bounded width and logarithmic length.
Indeed, we can simply take $\pv{H}$ to be the \emph{class} of all finite cyclic groups, which satisfies $C^2_{\pv{H}}(N) \in \mathcal{O}(\log N)$.

\begin{proof}
  In the additional register, we accumulate $t = t_1 \cdots t_n$ with $t_i \in G_{i-1}$.
  Specifically, suppose that $t'_i = (t_1  \cdots t_{i-1})^{-1} t \in G_{i-1}$ is already determined.
  Lifting an appropriately chosen straight-line program computing $\pi_i(t'_i) \in H_i$ over $\pi_i(\Sigma \cap G_{i-1})$ -- where $\pi_i \colon G_{i-1} \to H_i$ denotes the quotient homomorphism -- yields a straight-line program computing $t_i \in G_{i-1}$ over~$\Sigma \cap G_{i-1}$ with $\pi_i(t_i) = \pi_i(t'_i)$.
  Since the latter implies $t'_{i+1} \in G_{i}$, we may continue the process until we eventually arrive at $t'_{n+1} \in G_{n} = 1$; hence, $t = t_1 \cdots t_n$.

  For the addendum we assume, without loss of generality, that $H_i \neq 1$ for all $1 \leq i \leq n$, so that $\abs{H_1}, \dots, \abs{H_n} \geq 2$.
  Then, since $\abs{H_1} \cdot \ldots \cdot \abs{H_n} = \abs{G}$ by Lagrange's theorem, we obtain the equality $\log \abs{H_1} + \ldots + \log \abs{H_n} = \log \abs{G}$ and the bound $n \leq \log \abs{G}$.
  This implies desired bound on $c^{w+1}_G(t; \Sigma)$ by superaditivity of the function $\log^c \colon [1, \infty) \to [0, \infty)$.
\end{proof}

\subsection{Solvable Groups}\label{sub:solvable-groups}

Let us now turn our attention to \emph{solvable} groups; that is, to groups admitting a subnormal series with Abelian consecutive quotients.
The finite such groups form the pseudovariety~$\Gsol$.

\begin{theorem}\label{thm:solvable-groups}
  Let $\pv{H}$ be a pseudovariety with $\pv{H} \sse \Gsol$.
  Then
  \vspace{-1ex}
  \[
    C^\infty_{\pv{H}}(N) \in \mathcal{O}(\log N)
    \quad\text{and}\quad
    C^4_{\pv{H}}(N) \in \mathcal{O}(\log^3 N).
  \]
\end{theorem}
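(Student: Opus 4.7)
My plan is to proceed by induction on the derived length $d$ of the solvable group $G \in \pv{H}$, exploiting its derived series $G = G^{(0)} \trianglerighteq G^{(1)} \trianglerighteq \cdots \trianglerighteq G^{(d)} = 1$ with Abelian successive quotients $A_i = G^{(i)}/G^{(i+1)}$. Throughout I would work with group straight-line programs and convert at the end via \cref{lem:inverses_SLP}; with inverses free of charge, every commutator $[a,b]$ and conjugate $a^b$ of already-computed registers costs only $\mathcal{O}(1)$ further instructions. For a given generating set $\Sigma$, I may assume $|\Sigma| \leq \log|G|$ by \cref{obs:log_gen_set}, and note for the cost analysis that $\sum_i \log|A_i| = \log|G|$ and $d \leq \log|G|$.

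For the unbounded-width bound $\tilde C^\infty_\pv{H}(N) = \mathcal{O}(\log N)$, the aim is to augment $\Sigma$ with a set $\Delta = \bigsqcup_{j=1}^{d-1} \Sigma_j$, where $\Sigma_j \sse G^{(j)}$ has size $\mathcal{O}(\log|A_j|)$, in such a way that $\Sigma \cup \Delta$ is adapted to the derived series and that each element of $\Sigma_j$ can be produced by $\mathcal{O}(1)$ additional group-SLP instructions on top of previously-constructed material. Given such $\Delta$, \cref{lem:subnormal_SLP} applied to the derived series, combined with \cref{pro:permutative} for the Abelian quotients, yields $\tilde c_G^{\,3}(t; \Sigma \cup \Delta) \leq \mathcal{O}(\sum_j \log|A_j|) + \mathcal{O}(d) = \mathcal{O}(\log|G|)$, while $\tilde c_G^{\,\infty}(\Delta; \Sigma) \leq |\Delta| \cdot \mathcal{O}(1) = \mathcal{O}(\log|G|)$; \cref{lem:augmented-generating-set} then combines the two into the claimed bound.

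For the bounded-width bound $C^4_\pv{H}(N) = \mathcal{O}(\log^3 N)$, I would instead traverse the derived series sequentially in a fixed number of registers, recomputing the needed generators from $\Sigma$ on demand rather than storing them. Each element of $\Sigma_j$ is an iterated commutator of depth $j$, but admits a reconstruction SLP over $\Sigma$ of length $\mathcal{O}(\log^2|G|)$ within bounded width via careful unrolling of the commutator expansion. The width-two Abelian procedure of \cref{pro:permutative} at level $j$ performs $\mathcal{O}(\log|A_j|)$ accesses to such generators (each triggering a reconstruction), and summing gives total length $\mathcal{O}(\sum_j \log^2|G| \cdot \log|A_j|) = \mathcal{O}(\log^3|G|)$, with width bounded by a small constant.

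The main obstacle in both constructions is the combinatorial lemma that a set $\Sigma_j$ of size $\mathcal{O}(\log|A_j|)$, consisting of conjugated commutators $[a,b]^c$ with $a, b, c$ drawn from $\Sigma \cup \Sigma_1 \cup \cdots \cup \Sigma_{j-1}$, can be chosen so that its image generates the Abelian group $A_j$. The existence of \emph{some} such collection (without size control) follows from the commutator identity $[xy,z] = [x,z]^y [y,z]$ together with $G^{(j)} = [G^{(j-1)}, G^{(j-1)}]$, which shows that any commutator of elements of $G^{(j-1)}$ decomposes as a product of conjugated commutators of a generating set; the quantitative refinement to $\mathcal{O}(\log|A_j|)$ elements, together with the requirement that the conjugators $c$ remain within the already-constructed pool so that the $\mathcal{O}(1)$ amortized cost per element is sound, I expect to handle by a Babai--Szemer\'edi-style cube-doubling argument exploiting the $\mathbb{Z}[G/G^{(j)}]$-module structure of $A_j$.
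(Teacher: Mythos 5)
Your overall scaffolding matches the paper's: work up the derived series, build an adapted generating set from commutators, apply \cref{lem:subnormal_SLP} with Abelian (or cyclic) quotients, and transfer back via \cref{lem:inverses_SLP,lem:augmented-generating-set}. However, in both halves the load-bearing step is left unproven, and in the bounded-width half the step you wave at is actually false for the construction you propose.

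\textbf{Unbounded width.} You correctly isolate the ``main obstacle'': choosing, at each level $j$, a set $\Sigma_j$ of $\mathcal{O}(\log|A_j|)$ conjugated commutators drawn from already-constructed material, generating $A_j = G^{(j-1)}/G^{(j)}$. You propose to resolve this with a Babai--Szemer\'edi-style cube-doubling over the $\mathbb{Z}[G/G^{(j)}]$-module $A_j$, but you do not carry it out, and it is unclear how to make the conjugators stay in the already-computed pool that way. The paper instead settles this with a much more elementary device: a greedy normal-closure lemma (\cref{lem:normal-closure}) showing that, for any $\Delta$, the normal closure $\ncl[G]{\Delta}$ is reached by adjoining at most $\log\bigl(\lvert\ncl[G]{\Delta}\rvert/\lvert\gen[G]{\Delta}\rvert\bigr)$ single conjugates $g^h$ with $g$ already built and $h\in\Sigma$, each of $\mathcal{O}(1)$ cost. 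Combined with a commutator identity for the derived series (\cref{lem:derived-series}), iterating this over levels yields \cref{pro:derived-series} with a telescoping cost of $\mathcal{O}(\log N)$, with no module theory needed. So your framing of the problem is right, but your proposed solution is a sketch of a different and unverified argument where a short greedy one suffices.

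\textbf{Bounded width.} Here the proposal has a genuine flaw. Your $\Sigma_j$ consists of iterated commutators with \emph{independent} arguments, $[a,b]^c$ with $a,b,c$ themselves possibly in $\Sigma_{j-1}$. You claim this can be ``carefully unrolled'' to a bounded-width SLP of length $\mathcal{O}(\log^2 N)$. This fails: to compute $[a,b]$ you must hold $a$ in a register while recomputing $b$ from scratch, so each level of nesting consumes one additional register, and the recursion depth is the derived length, up to $\Theta(\log N)$. This is exactly the obstruction the paper highlights when it explains why the derived-series construction does not transfer to bounded width. The paper's fix is to change the construction so it only ever forms commutators of the shape $[g, g^v]$ (the $\commop{}{}$ construction in \cref{lem:pcc-efficient,lem:pcc-effective}): both arguments share the base $g$, so one register holds $g$, a second accumulates $g^v$ and then $[g,g^v]^u$ by left/right multiplications and exponentiation, and width stays at three regardless of depth. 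Without an analogous structural constraint on the commutators, your per-element bound of $\mathcal{O}(\log^2 N)$ in bounded width is not established, and the $\mathcal{O}(\log^3 N)$ total does not follow.
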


We suspect that it is also possible to show that $C_{\pv{H}}^w(N) \in \mathcal{O}(\log^2 N)$ for some slightly larger width bound $w < \infty$ using similar techniques. 
On the other hand, to answer whether or not the bound $C_{\pv{H}}^w(N) \in \mathcal{O}(\log N)$ holds for some $w < \infty$ is likely to require new ideas.

\medskip

In the following, we prove \cref{thm:solvable-groups} by providing efficient constructions of polycyclic generating sets for solvable groups -- the unbounded and bounded width cases being treated separately.
Throughout, we write $G = G^{(0)} \trianglerighteq G^{(1)} \trianglerighteq \dots $ for the \emph{derived series} of a group~$G$ where $G^{(k)} = [G^{(k-1)},G^{(k-1)}]$ and abbreviate $G' \coloneqq G^{(1)}$ and $G'' \coloneqq G^{(2)}$.

\begin{lemma}\label{lem:derived-series}
  Let $G$ be a group, $N \trianglelefteq G$ a normal subgroup, and $\Delta \sse N$.
  Then
  \[
    N = \gen[G]{\Delta} N'' \implies N' = \ncl[G]{[\Delta, \Delta]} N''
    \quad\text{and}\quad
    N = \gen[G]{\Delta} N' \implies N = \ncl[G]{\Delta} N''.
  \]
  In particular, if $N$ is a solvable group, then $N = \gen[G]{\Delta} N' \implies N = \ncl[G]{\Delta}$.
\end{lemma}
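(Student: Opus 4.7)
The plan is to pass to appropriate quotients of $N$ in each part and exploit the interplay between commutator subgroups, normal closures, and the fact that $N'$ and $N''$ are characteristic in $N$ (hence normal in $G$, since $N$ itself is).

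The key preliminary fact I would record is this: if $H = \gen{X}$, then $H' = \ncl[H]{[X,X]}$. This is standard—in $H / \ncl[H]{[X,X]}$ the images of $X$ commute pairwise, hence so do their inverses, so that quotient is abelian and $H'$ is contained in $\ncl[H]{[X,X]}$; the reverse inclusion is obvious.

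For the first implication, I would pass to the metabelian group $\bar N = N/N''$. The hypothesis $N = \gen[G]{\Delta} N''$ says exactly that the image $\bar\Delta$ of $\Delta$ generates $\bar N$ as a subgroup, so the preliminary fact gives $\bar N' = \ncl[\bar N]{[\bar\Delta, \bar\Delta]}$. Lifting back along $N \to \bar N$ yields $N' \subseteq \ncl[N]{[\Delta, \Delta]} N'' \subseteq \ncl[G]{[\Delta, \Delta]} N''$. The reverse inclusion is immediate because $[\Delta,\Delta] \subseteq N'$ and $N'$ is characteristic in $N$, hence normal in $G$, so $\ncl[G]{[\Delta,\Delta]} \subseteq N'$.

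For the second implication, the clean move is to let $K = \ncl[G]{\Delta}$ and consider $\bar N = N/(KN'')$. Since $\Delta \subseteq K$, the images of $\Delta$ in $\bar N$ are trivial, so from $N = \gen[G]{\Delta} N'$ we read off $\bar N = \bar N'$. On the other hand, $\bar N$ is a quotient of the metabelian group $N/N''$, so $\bar N'' = 1$. Combining these, $\bar N = \bar N' = \bar N'' = 1$, i.e.\ $N = KN''$, as required. The ``in particular'' statement follows from the same trick applied with $\bar N = N/\ncl[G]{\Delta}$: again $\bar N = \bar N'$, but now $\bar N$ is solvable as a quotient of the solvable group $N$, and a solvable group equal to its own derived subgroup must be trivial (its derived series is constant from step one, yet terminates at $1$).

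I do not expect a substantial obstacle: the only subtlety is distinguishing $\ncl[N]$ from $\ncl[G]$, but since all of $N'$, $N''$, and the commutator subgroups arising are characteristic in $N$ and hence $G$-invariant, the two normal closures sandwich each other tightly enough that the argument goes through verbatim.
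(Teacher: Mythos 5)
Your proof is correct, and it takes a genuinely different route from the paper's. For the first implication the paper simply cites Robinson [5.1.7]; you instead derive it from the standard fact that $H=\gen{X}$ implies $H'=\ncl[H]{[X,X]}$, applied in the metabelian quotient $N/N''$ and then lifted (being careful, as you are, that $\ncl[N]{\cdot}\subseteq\ncl[G]{\cdot}$ and that the reverse inclusion follows from $N'$ being $G$-invariant). For the second implication the paper performs a direct commutator-identity calculation, expanding $[\gen{\Delta}N',\gen{\Delta}N']$ via $[AB,CD]$-type identities and absorbing the resulting pieces into $\ncl[G]{\Delta}$ and $N''$; you instead pass to the quotient $N/(\ncl[G]{\Delta}N'')$, observe it is simultaneously perfect (because $\Delta$ dies) and metabelian (as a quotient of $N/N''$), and conclude it is trivial. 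For the addendum the paper gestures at an induction on derived length; you run the same quotient argument once more with $N/\ncl[G]{\Delta}$, which is perfect and solvable, hence trivial. Your version trades the paper's explicit commutator algebra for a uniform ``perfect + solvable/metabelian $\Rightarrow$ trivial'' pattern; it is arguably cleaner and makes the addendum a one-liner rather than an omitted induction, at the cost of having to verify that the various normal closures and characteristic subgroups ($\ncl[G]{\Delta}N''$, $N'$, $N''$) interact as expected under the quotient maps -- which you do handle correctly.
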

\begin{proof}
  The first implication can be found in Robinson's book \cite[5.1.7]{Robinson96book}.
  For the second implication, note that if $N = \gen[G]{\Delta} N'$, then $N = \gen[G]{\Delta} \, [\gen[G]{\Delta} N', \gen[G]{\Delta} N']$ and, hence,
  \[
		N \sse \ncl[G]{\Delta} \, [\ncl[G]{\Delta} N', \ncl[G]{\Delta} N']
		= \ncl[G]{\Delta} \, [\ncl[G]{\Delta}, \ncl[G]{\Delta}] \, [\ncl[G]{\Delta}, N']\, [N',N'] 
		= \ncl[G]{\Delta} N''
  \]
  where the second equality is due to well-known commutator identities; see \cite[5.1.5]{Robinson96book}.

  The addendum follows by induction on the derived length of $N$.
  We omit the details.
\end{proof}

Given subsets $\Delta, \Sigma \sse G$ and $k \geq 0$, we write $\normalop{\Delta}{\Sigma}{k} = \{ g^h : g \in \Delta, h \in \Sigma^{\leq k}\}$ and, in case $\Delta$ consists of a single element $g \in G$, we abbreviate this to $\normalop{g}{\Sigma}{k}$.
This construction can be used to efficiently construct a generating set of the normal closure.

\begin{lemma}\label{lem:normal-closure}
  Let $G$ be a group, $\Sigma \sse G$ a generating set, and $\Delta \sse G$.
  If $k \geq \log \abs{\ncl[G]{\Delta} / \gen[G]{\Delta}}$, then there exists some $\Xi \sse \normalop{\Delta}{\Sigma}{k}$ with $\ncl[G]{\Delta} = \gen[G]{\Delta \cup \Xi}$ and $\tilde c^\infty_G(\Xi; \Delta \cup \Sigma) \leq 5k$.
\end{lemma}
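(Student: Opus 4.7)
The plan is to construct $\Xi$ and its straight-line program simultaneously via an iterative procedure exploiting the identity $g^{hs} = s^{-1}(g^h)s$, which allows a previously computed conjugate to be extended by one generator at the cost of two multiplications. At each step the procedure maintains a set $\Xi$ of already selected conjugates, a subset $D \sse \Delta$ of generators currently loaded into registers, a set of ``prepared'' elements $s \in \Sigma$ for which both $s$ and $s^{-1}$ occupy registers, and a tree $T$ of pairs $(g, h)$ with $g \in D$ and $h \in \Sigma^{\leq k}$ such that $g^h$ resides in some register. Writing $H \coloneqq \gen[G]{\Delta \cup \Xi}$ for the currently generated subgroup, I iterate the procedure as long as $H < \ncl[G]{\Delta}$.

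Each iteration performs one of two moves, always producing exactly one new element of $\Xi$. \emph{Extension:} if some $(g, h) \in T$ and $s \in \Sigma$ satisfy $g^{hs} \notin H$, prepare $s$ if necessary, compute $g^{hs} = s^{-1}(g^h)s$ with two multiplications, adjoin $(g, hs)$ to $T$, and add $g^{hs}$ to $\Xi$. \emph{Seeding:} if no such extension exists, find some $g \in \Delta \setminus D$ and $s \in \Sigma$ with $g^s \notin H$, load $g$ into a fresh register, adjoin $(g, \epsilon)$ to $T$ with value $g$, and then perform an extension from $(g, \epsilon)$ by $s$. The key combinatorial claim is that at least one of these moves succeeds whenever $H < \ncl[G]{\Delta}$: failure of both yields $\xi^s \in H$ for every $\xi \in \Delta \cup \Xi$ and every $s \in \Sigma$, so that $H^s \sse H$ for every $s \in \Sigma$; since $G$ is finite and generated by $\Sigma$, this upgrades to $H^G = H$, forcing $H$ to be a normal subgroup of $G$ containing $\Delta$, and therefore also containing $\ncl[G]{\Delta}$.

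For the quantitative bound, each iteration strictly enlarges $H$, and by Lagrange's theorem every strict enlargement at least doubles $\abs{H}$; combined with $\abs{\ncl[G]{\Delta} / \gen[G]{\Delta}} \leq 2^k$, this gives $m \coloneqq \abs{\Xi} \leq k$. A generator is loaded only in a seeding iteration (and each such iteration contributes exactly one extension to $\Xi$), so $d \coloneqq \abs{D} \leq m$. Similarly, every preparation is immediately followed by a successful extension, so the number $\ell$ of prepared generators satisfies $\ell \leq m$. Summing the instructions~--~$d$ loads for seeded generators, $2\ell$ operations for preparations of $s \in \Sigma$, and $2m$ multiplications for the extensions themselves~--~yields a straight-line program for $\Xi$ over $\Delta \cup \Sigma$ of total length at most $d + 2\ell + 2m \leq 5m \leq 5k$.

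The main obstacle is the combinatorial claim in the second paragraph, specifically the upgrade from closure of $H$ under $\Sigma$-conjugation to full normality in $G$: this relies on both the hypothesis that $\Sigma$ generates $G$ and the finiteness of $G$, which together let one express each $s^{-1}$ as a positive power of $s$ and hence conclude closure under $\Sigma^{-1}$-conjugation as well.
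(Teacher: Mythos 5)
Your proof is correct and follows essentially the same strategy as the paper's: a greedy process that adjoins one conjugate $g^h$ (with $g \in \Delta \cup \Xi$, $h \in \Sigma$) per step so that $\gen[G]{\Delta \cup \Xi}$ strictly grows, terminating within $k$ steps by Lagrange's argument, with correctness established via the observation that a subgroup containing $\Delta$ and closed under $\Sigma$-conjugation must be normal (and hence contain $\ncl[G]{\Delta}$). The paper phrases the step-success condition directly (decomposing $g = g_1 \cdots g_n$ to find some $g_i^h \notin H$) and simply asserts the $5k$ bound as ``clear'', whereas you argue by contraposition and make the instruction count explicit via the seeding/extension/preparation bookkeeping — these are presentational choices that unpack precisely the accounting the paper leaves implicit.
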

\begin{proof}
  To an initially empty set $\Xi$, we successively add elements $g^h$ with $g\in \Delta \cup \Xi$, $h \in \Sigma$, and such that (the size of) the group $\gen[G]{\Delta \cup \Xi}$ increases in every step.
  Since $\Delta \cup \Xi \sse \ncl[G]{\Delta}$, this process terminates after at most $\log \abs{\ncl[G]{\Delta} / \gen[G]{\Delta}} \leq k$ steps.
  Hence, $\Xi \sse \normalop{\Delta}{\Sigma}{k}$ and, clearly, $\tilde c^\infty_G(\Xi; \Delta \cup \Sigma) \leq 5k$.
  We show that $\ncl[G]{\Delta} = \gen[G]{\Delta \cup \Xi}$ holds eventually.

  Suppose that $g^h \not\in \gen[G]{\Delta \cup \Xi}$ for some $g \in \gen[G]{\Delta \cup \Xi}$ and $h \in G$.
  Since $\Sigma$ generates $G$ as a semigroup, we may assume that $h \in \Sigma$.
  Moreover, writing $g = g_1 \cdots g_n$ with $g_1, \dots, g_n \in \Delta \cup \Xi$, we see that $g_i^h \not\in \gen[G]{\Delta \cup \Xi}$ for some $1 \leq i \leq n$, for otherwise $g^h = g_1^h \cdots g_n^h \in \gen[G]{\Delta \cup \Xi}$.
  It follows that we may still add the element $g_i^h$ to the set $\Xi$, thereby continuing the process.
\end{proof}

Combining the above \cref{lem:derived-series,lem:normal-closure}, we arrive at the following result. Together with \cref{lem:subnormal_SLP}, it implies the assertion of \cref{thm:solvable-groups} for unbounded width.

\begin{proposition}\label{pro:derived-series}
  Let $G$ be a solvable group with a generating set $\Sigma$. Then there exists a generating set $\Delta \sse G$ adapted to the derived series of $G$ with $\tilde c^\infty_G(\Delta; \Sigma) \in \mathcal{O}(\log \abs{G})$.
\end{proposition}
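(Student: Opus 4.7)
The plan is to proceed by induction on the derived length $d$ of the solvable group $G$. In the base case $d = 1$, the group $G$ is Abelian and the choice $\Delta = \Sigma$ is adapted to the derived series $G \trianglerighteq 1$ at zero cost. For the inductive step $d \geq 2$, the strategy is first to produce a generating set of the derived subgroup $G' = G^{(1)}$ from $\Sigma$ and then to apply the induction hypothesis to $G'$ (which has derived length $d-1$).

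To produce generators of $G'$, I would first use standard commutator identities such as $[xy,z] = [x,z]^y [y,z]$ together with the fact that $\Sigma$ generates $G$ to conclude that $G' = \ncl[G]{[\Sigma, \Sigma]}$. Then I would invoke \cref{lem:normal-closure} with $\Delta = [\Sigma, \Sigma]$ and $k = \lceil \log \abs{G} \rceil$, producing a set $\Xi$ such that $\gen[G]{[\Sigma, \Sigma] \cup \Xi} = G'$ at cost $5k \in \mathcal{O}(\log \abs{G})$. Applying the induction hypothesis to $G'$ with generating set $[\Sigma, \Sigma] \cup \Xi$ yields a set $\Delta''$ adapted to the derived series of $G'$ at cost $\mathcal{O}(\log \abs{G'})$; since any group straight-line program in $G'$ is also valid in $G$, the set $\Delta = \Sigma \cup [\Sigma, \Sigma] \cup \Xi \cup \Delta''$ will be the desired generating set of $G$ adapted to its derived series.

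The hard part will be the cost analysis, in particular the cost of computing the commutator set $[\Sigma, \Sigma]$. A naive accounting charges $\Theta(\abs{\Sigma}^2)$ operations per level; even after first trimming $\Sigma$ to size at most $\log \abs{G}$ via \cref{obs:log_gen_set}, this gives up to $\mathcal{O}(\log^2 \abs{G})$ per level and, summed across the at most $d \leq \log \abs{G}$ levels, a total of $\mathcal{O}(\log^3 \abs{G})$, far exceeding the target $\mathcal{O}(\log \abs{G})$. To reach the claimed bound, I would integrate the commutator and normal-closure steps into a single greedy construction: starting from $\Sigma$ in registers, at each iteration a single element is added which is either a commutator $[a, b]$ or a single-generator conjugate $a^\sigma$ (with $a, b$ already available and $\sigma \in \Sigma$) and which strictly enlarges the current subgroup within its derived-series level. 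By \cref{obs:log_gen_set} each such addition at least doubles the relevant section of the series, so the total number of additions is bounded by $\sum_{i} \log \abs{G^{(i)}/G^{(i+1)}} = \log \abs{G}$; since each contributes only $\mathcal{O}(1)$ straight-line instructions, the total cost is $\mathcal{O}(\log \abs{G})$.
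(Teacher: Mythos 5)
Your overall plan follows the same lines as the paper's proof: build generators of each $G^{(i)}$ via commutators of the previous level's generators combined with the normal-closure process of \cref{lem:normal-closure}, and you correctly pinpoint that naively computing the full commutator set $[\Sigma, \Sigma]$ at each level is the bottleneck and that some greedy selection is needed to avoid it. However, the final step --- replacing the level-by-level construction with a single greedy process at cost $\mathcal{O}(\log \abs{G})$ --- is asserted rather than proved, and neither natural reading of it actually delivers the bound. If ``the relevant section of the series'' is meant as $G^{(i)}/G^{(i+1)}$, so that each greedy step doubles $\gen[G]{\Delta_i}G^{(i+1)}/G^{(i+1)}$ and the telescoping sum is $\log\abs{G}$, then the existence of a useful greedy step is unclear: to invoke the first implication of \cref{lem:derived-series} and conclude $G^{(i)} = \ncl[G]{[\Delta_{i-1}, \Delta_{i-1}]}\ms G^{(i+1)}$, the hypothesis $G^{(i-1)} = \gen[G]{\Delta_{i-1}}G^{(i+1)}$ (generation \emph{two} derived levels deep) is required, whereas your invariant supplies only $G^{(i-1)} = \gen[G]{\Delta_{i-1}}G^{(i)}$. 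If instead you track the unquotiented subgroups $K_i = \gen[G]{\Phi \cap G^{(i)}}$ generated by the available elements $\Phi$, greedy progress is easier to establish, but each $K_i$ can then double up to $\log\abs{G^{(i)}}$ times, and $\sum_i \log\abs{G^{(i)}}$ is in general $\omega(\log\abs{G})$ once the derived length is unbounded.

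The paper avoids both problems with a three-phase construction at each level $i$: first conjugate $\Delta_{i-1}$ by $\Sigma$ (via \cref{lem:normal-closure}) until it generates $G^{(i-1)}$ modulo $G^{(i+1)}$ --- this extra normal-closure pass, two derived levels deep, is the ingredient your proposal omits; then greedily pick a set $\Theta$ of at most $\log\abs{G^{(i)}/G^{(i+1)}}$ commutators of these elements (minimality here is the paper's version of your greedy); then conjugate $\Theta$ again to obtain $\Delta_i$. Each phase at level $i$ is charged to $\log\abs{G^{(i-1)}/G^{(i+1)}}$, and this telescope sums to $\mathcal{O}(\log\abs{G})$ because every derived quotient is counted at most twice. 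Spelling out this two-levels-deep bookkeeping is exactly what makes the greedy intuition in your last paragraph go through.
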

\begin{proof}
  For every $i \geq 0$, we will construct a set $\Delta_i \sse G^{(i)}$ with $G^{(i)} = \gen[G]{\Delta_i} G^{(i+1)}$ and such that $\tilde c^\infty_G(\Delta_0; \Sigma) \leq \log {\lvert G^{(0)} / G^{(1)} \rvert}$ and $\tilde c^\infty_G(\Delta_{i}; \Delta_{i-1} \cup \Sigma) \leq 17 \cdot \log {\lvert G^{(i-1)} / G^{(i+1)} \rvert}$ for $i \geq 1$.
  In particular, the union $\Delta = \bigcup_{i \geq 0} \Delta_i$ will have the required properties, as 
  \[
    \tilde c^\infty_G(\Delta; \Sigma) \leq \tilde c^\infty_G(\Delta_0; \Sigma) + \sum_{\smash{i \geq 1}} \tilde c^\infty_G(\Delta_i; \Delta_{i-1} \cup \Sigma) \leq 18 \cdot \log \abs{G} + 17 \cdot \log \abs{G'} \in \mathcal{O}(\log \abs{G}).
  \]

  To begin the construction, we choose any $\Delta_0 \sse \Sigma$ of size $\abs{\Delta_0} \leq {\lvert G^{(0)} / G^{(1)} \rvert}$ with the property that $G = \gen[G]{\Delta_0} G'$.
  Now suppose that we have already constructed $\Delta_0, \dots, \Delta_{i-1}$.
  To construct $\Delta_i$ for $i \geq 1$, we proceed as follows.
  First, using \cref{lem:normal-closure}, choose $\Xi \sse G^{(i-1)}$ such that $\ncl[G]{\Delta_{i-1}} G^{(i+1)} = \gen[G]{\Delta_{i-1} \cup \Xi} G^{(i+1)}$ and $\tilde c^\infty_G(\Xi; \Delta_{i-1} \cup \Sigma) \leq 5 \cdot \log {\lvert G^{(i-1)} / G^{(i+1)} \rvert}$.
  We then have $G^{(i-1)} = \ncl[G]{\Delta_{i-1}} G^{(i+1)} = \gen[G]{\Delta_{i-1} \cup \Xi} G^{(i+1)}$ by \cref{lem:derived-series}.

  As the next step, we choose $\Theta \sse \{ [g,h] : g,h \in \Delta_{i-1} \cup \Xi \}$ minimally with the property that $\gen[G]{\Theta} G^{(i+1)} = [\Delta_{i-1} \cup \Xi, \Delta_{i-1} \cup \Xi] G^{(i+1)}$ and, hence, $G^{(i)} = \ncl[G]{\Theta} G^{(i+1)}$ by \cref{lem:derived-series}.
  By minimality of $\Theta$, we have $\abs{\Theta} \leq \log {\lvert G^{(i)} / G^{(i+1)}\rvert}$.
  Moreover, since $\tilde c^\infty_G(t; \Delta_{i-1} \cup \Xi) \leq 7$ for every $t \in \Theta$, this yields $\tilde c^\infty_G(\Theta; \Delta_{i-1} \cup \Xi) \leq 7 \cdot \log {\lvert G^{(i)} / G^{(i+1)}\rvert}$.

  Finally, we use \cref{lem:normal-closure} to obtain $\Delta_{i} \sse G^{(i)}$ with $\gen[G]{\Delta_i} G^{(i+1)} = \ncl[G]{\Theta} G^{(i+1)} = G^{(i)}$ and $\tilde c^\infty_G(\Delta_{i}; \Theta \cup \Sigma) \leq 5 \cdot \log {\lvert G^{(i)}/G^{(i+1)} \rvert}$.
  Combining the straight-line costs yields
  \[
    \begin{multlined}[b]
    \tilde c^\infty_G(\Delta_i; \Delta_{i-1} \cup \Sigma) 
    \leq \tilde c^\infty_G(\Delta_i; \Theta \cup \Sigma) + \tilde c^\infty_G(\Theta; \Xi \cup \Sigma) + \tilde c^\infty_G(\Xi; \Delta_{i-1} \cup \Sigma)\\
    \leq 12 \cdot \log {\lvert G^{(i)} / G^{(i+1)} \rvert} + 5 \cdot \log {\lvert G^{(i-1)} / G^{(i+1)} \rvert} 
    \leq 17 \cdot \log {\lvert G^{(i-1)} / G^{(i+1)} \rvert}. 
  \end{multlined}\qedhere
  \]
\end{proof}

Unfortunately, the construction in \cref{pro:derived-series} does not seem to adapt well to the case of bounded width straight-line programs.
The problem is that the commutators $[g,h]$ formed in the construction have arguments $g$ and $h$ that are essentially independent of one another.
Since the arguments $g$ and $h$ may themselves be commutators, nested to potentially unbounded depth, this rules out computation with a bounded number of registers. 

This issue may be circumvented with an alternative construction:
given subsets $\Delta, \Sigma \sse G$ with $\Sigma$ generating $G$, we let $k = \big\lceil \log \abs{G} \big\rceil$ and define sets
\[
  \commop[i+1]{\Delta}{\Sigma} = \normalop{\{ [g,\tilde g] : g \in \commop[i]{\Delta}{\Sigma}, \tilde g \in \normalop{g}{\Sigma}{k} \}}{\Sigma}{k}
\]
for $i \geq 0$ with $\commop[0]{\Delta}{\Sigma} = \normalop{\Delta}{\Sigma}{k}$.
Recall from \cref{lem:normal-closure} that $\commop[0]{\Delta}{\Sigma}$ generates the normal subgroup $\ncl[G]{\Delta}$ and, hence, so does the union $\commop[\ast]{\Delta}{\Sigma} = \bigcup_{i \geq 0} \commop[i]{\Delta}{\Sigma}$.

Note also that $\commop[i]{\Delta}{\Sigma} \sse G^{(i)}$.
Therefore, if $G$ is solvable, then $\commop[i]{\Delta}{\Sigma} = 1$ for all $i \geq k$, so that we then have $\commop[\ast]{\Delta}{\Sigma} = \commop[0]{\Delta}{\Sigma} \cup \dots \cup \commop[k]{\Delta}{\Sigma}$.

\begin{lemma}\label{lem:pcc-efficient}
  If $G$ is solvable, then $c^3_G(t; \Delta \cup \Sigma) \in \mathcal{O}(\log^2 \abs{G})$ for all $t \in \commop[\ast]{\Delta}{\Sigma}$.
\end{lemma}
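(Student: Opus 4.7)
The plan is to prove this by induction on $i$, showing that every $t \in \commop[i]{\Delta}{\Sigma}$ admits a straight-line program of width $3$ over $\Delta \cup \Sigma$ of length at most $C \cdot (i+1) \cdot \log \abs{G}$ for a suitable constant $C$. Since $G$ is solvable we have $\commop[i]{\Delta}{\Sigma} = 1$ for $i \geq k = \lceil \log \abs{G} \rceil$, so any $t \in \commop[\ast]{\Delta}{\Sigma}$ lies in some $\commop[i]{\Delta}{\Sigma}$ with $i \leq k$, giving the claimed bound $\mathcal{O}(\log^2 \abs{G})$.

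For the base case $i = 0$, an element $t = g^h \in \normalop{\Delta}{\Sigma}{k}$ can be computed with three registers in $\mathcal{O}(\log\abs{G})$ instructions: first build $h = s_{j_1} \cdots s_{j_m}$ with $m \leq k$ in one register using a second register to load successive letters, then compute $h^{-1}$ into the third register by fast exponentiation to the power $\abs{G} - 1$ (keeping $h$ fixed as the base), and finish with two multiplications. The induction hypothesis then reduces the inductive step to the following central claim: if $g$ is already computed in some register using a width-$3$ straight-line program, then a further $\mathcal{O}(\log\abs{G})$ instructions suffice to compute any $t = [g, \tilde g]^h$ with $\tilde g = g^{h'}$ and $h, h' \in \Sigma^{\leq k}$, while maintaining width $3$.

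To establish this claim, one proceeds in three phases, each of which uses only $\mathcal{O}(\log\abs{G})$ instructions. First, with $g$ held in register $A$, build $h'$ in register $B$ (using $C$ as a loading slot), then compute $h'^{-1}$ into $C$ by fast exponentiation with $B$ as the fixed base; finally overwrite $C$ with $C \cdot A \cdot B = \tilde g$, preserving $A = g$. Second, with $A = g$ and $C = \tilde g$ in hand, compute $\tilde g^{-1}$ into $B$ by fast exponentiation with $C$ as base, then overwrite $B$ with $B \cdot A \cdot C = \tilde g^{-1} g \tilde g$, discard the value $\tilde g$ in $C$, and compute $g^{-1}$ into $C$ by fast exponentiation with $A$ as the preserved base; a final multiplication $B \gets C \cdot B$ leaves $[g, \tilde g] = g^{-1} \tilde g^{-1} g \tilde g$ in $B$. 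Third, discarding $A$ and $C$, build $h$ in $A$, compute $h^{-1}$ into $C$ by fast exponentiation, and apply $B \gets C \cdot B$ followed by $B \gets B \cdot A$ to obtain $t = [g, \tilde g]^h$ in $B$.

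The main obstacle is that width $3$ is tight: whenever a fast exponentiation is required, two registers must be devoted to the base and the accumulator, leaving only one register available for previously computed data that must be preserved. A minor technical wrinkle is that straight-line programs offer no register-copy primitive, so when the accumulator of a fast exponentiation must be seeded from a register whose content has to survive, the standard high-to-low squaring loop must be adjusted (e.g., by absorbing the top two bits of the exponent into the initial squaring $C \gets A \cdot A$). Once these register manipulations are orchestrated correctly, each of the three phases is carried out in $\mathcal{O}(\log\abs{G})$ steps, which closes the induction and yields the bound $c^3_G(t; \Delta \cup \Sigma) \in \mathcal{O}(\log^2 \abs{G})$.
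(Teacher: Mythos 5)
Your proof is correct and follows essentially the same approach as the paper: induction over the index $i$ of $\commop[i]{\Delta}{\Sigma}$, keeping $g$ available in one of three registers while inverting intermediate products via repeated squaring with the remaining two registers. The paper phrases the inductive step a bit more compactly by expanding $t = u^{-1}g^{-1}v^{-1}g^{-1}vgv^{-1}gvu$ and building it in the accumulator with a constant number of inversions, but the register choreography is the same as what you make explicit in your three phases.
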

\begin{proof}
  Let $t = [g, \tilde g]^u \in \commop[i]{\Delta}{\Sigma}$ where $g \in \commop[i-1]{\Delta}{\Sigma}$ and $\tilde g = g^v$ with $u,v \in \Sigma^{\leq k}$.

  To prove the required bound on the straight-line cost, it suffices to show that $t$ can be computed by a straight-line program over $\Delta \cup \Sigma$ of width three and length $\mathcal{O}(\log \abs{G})$ under the additional premiss that $g$ is already contained in one of the registers.
  To do so, we always keep $g$ in this register $r_g$.
  We use one further register $r_a$ as an accumulator, which should hold the value $t$ at the end of execution.
  The third register $r_u$ is a utility register used for loading generators and for fast exponentiation (in particular, the computation of inverses).
  We allow for $r_a$ and $r_u$ to exchange these roles during the computation as necessary.

  It remains to observe that $t = u^{-1}g^{-1}v^{-1}g^{-1}vgv^{-1}gvu$ can be computed from $g$ and $\Sigma$ by successively inverting the accumulator or multiplying the accumulator from the left or right by $g$ or elements from $\Sigma$, and this requires only a constant number of inversions.
\end{proof}

\begin{lemma}\label{lem:pcc-effective}
  If $G$ is solvable, then $\commop[\ast]{\Delta}{\Sigma} $ is a polycyclic generating set of $\ncl[G]{\Delta}$.
\end{lemma}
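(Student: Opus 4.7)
The proof establishes two things: that $\commop[\ast]{\Delta}{\Sigma}$ generates $N := \ncl[G]{\Delta}$, and that a subnormal series of $N$ with Abelian quotients adapted to $\commop[\ast]{\Delta}{\Sigma}$ exists. The first is immediate from \cref{lem:normal-closure}: with $k = \lceil\log|G|\rceil \geq \log|N/\gen[G]{\Delta}|$, the lemma yields $\gen[G]{\Delta \cup \commop[0]{\Delta}{\Sigma}} = N$, while $\commop[\ast]{\Delta}{\Sigma} \sse N$ is clear since $N \trianglelefteq G$ is closed under the commutator and conjugation operations used to build the $\commop[i]{\Delta}{\Sigma}$.

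For the polycyclic property, the natural candidate filtration is $L_i = \gen[G]{\bigcup_{j \geq i}\commop[j]{\Delta}{\Sigma}}$. Each $L_i$ is $G$-normal (its defining set is closed under $\Sigma^{\leq k}$-conjugation, while $\Sigma$ generates $G$), and an induction on $j$ gives $\commop[j]{\Delta}{\Sigma} \sse N^{(j)}$, so solvability forces $L_m = 1$ for $m = \lceil\log|G|\rceil$; moreover, the chain is adapted to $\commop[\ast]{\Delta}{\Sigma}$ by construction. The obstacle is that the quotients $L_{i-1}/L_i$ need not be Abelian: in $D_4$ with $\Delta = \Sigma = \{r, s\}$, every restricted commutator $[g, g^v]$ vanishes, so $L_1 = 1$ while $L_0/L_1 = D_4$ is non-Abelian.

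Accordingly, I plan to refine the filtration recursively, proceeding by induction on $|N|$. Lift a polycyclic series of the Abelianization $N/N'$ adapted to the image of $\commop[\ast]{\Delta}{\Sigma}$ (such a series exists, as every generating set of a finite Abelian group is polycyclic) to a chain $N = P_0 \trianglerighteq P_1 \trianglerighteq \dots \trianglerighteq P_l$ of $G$-normal subgroups with Abelian quotients. The key constraint is to choose $P_l$ and the intermediates so that $\commop[\ast]{\Delta}{\Sigma} \cap P_i$ generates $P_i$ itself (not merely $P_i/N'$); if the residual subgroup $P_l$ is already Abelian and adapted then the series terminates with $P_l \trianglerighteq 1$, otherwise we recurse on $P_l$ with generating set $\commop[\ast]{\Delta}{\Sigma} \cap P_l$ (which inherits closure under $\Sigma^{\leq k}$-conjugation).

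The principal obstacle is ensuring that each lifted subgroup is in fact generated by its intersection with $\commop[\ast]{\Delta}{\Sigma}$: since $\commop[i]{\Delta}{\Sigma}$ supplies only restricted commutators $[g, g^v]^u$ rather than arbitrary $[g_1, g_2]$, bridging the gap between $N'$ and the lifts requires a careful argument exploiting the $\Sigma^{\leq k}$-conjugation closure together with \cref{lem:derived-series} (particularly its addendum for solvable groups, which allows one to upgrade $N = \gen[G]{\Delta}N'$ to $N = \ncl[G]{\Delta}$). In the $D_4$ example, the refinement terminates at $D_4 \trianglerighteq \langle r\rangle \trianglerighteq 1$, with $\commop[\ast]{\Delta}{\Sigma} \cap \langle r\rangle = \{1, r, r^3\}$ generating the cyclic $\langle r\rangle$; in $S_3$ with $\Delta = \{(12)\}$, the commutator $[(12), (13)] = (123) \in \commop[1]{\Delta}{\Sigma}$ fills in $A_3$, yielding $S_3 \trianglerighteq A_3 \trianglerighteq 1$.
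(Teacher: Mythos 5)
Your observation that the naive filtration $L_i = \gen[G]{\bigcup_{j\geq i}\commop[j]{\Delta}{\Sigma}}$ can have non-Abelian quotients is correct and well illustrated by the $D_4$ example; that honest diagnosis is the strongest part of the proposal. However, the fix you propose is underspecified at exactly the point where the proof has to live, and as written it fails on your own example.

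You say: lift a polycyclic series of the Abelianization $N/N'$ adapted to the image of $\commop[\ast]{\Delta}{\Sigma}$ to a chain $N = P_0 \trianglerighteq \dots \trianglerighteq P_l$, and recurse on $P_l$ with generating set $\commop[\ast]{\Delta}{\Sigma} \cap P_l$. But a lift of a polycyclic series of $N/N'$ necessarily bottoms out at $P_l = N'$. In your $D_4$ example with $\Delta = \Sigma = \{r,s\}$ one has $N = D_4$, $N' = \{1,r^2\}$, yet $\commop[\ast]{\Delta}{\Sigma} = \{1, r, r^3, s, r^2 s\}$, so $\commop[\ast]{\Delta}{\Sigma}\cap N' = \{1\}$ does \emph{not} generate $N'$. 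So the construction you describe does not produce the correct series $D_4 \trianglerighteq \langle r\rangle \trianglerighteq 1$ that you exhibit; the latter avoids $D_4'$ altogether, and nothing in your plan explains how to land on it. There is also a circularity issue: the induction is on $|N|$ for sets of the shape $\commop[\ast]{\Delta'}{\Sigma}$, but $\commop[\ast]{\Delta}{\Sigma}\cap P_l$ is not visibly of that form (closure under $\Sigma^{\leq k}$-conjugation alone is not enough), so the induction hypothesis cannot be applied to it.

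The paper's proof contains two moves that you are missing and that resolve both problems. First, it reduces to $\Delta$ a singleton by using $\ncl[G]{\Delta} = \prod_{g\in\Delta}\ncl[G]{g}$ together with $\commop[\ast]{\Delta}{\Sigma} = \bigcup_{g\in\Delta}\commop[\ast]{\{g\}}{\Sigma}$: whenever every $\ncl[G]{g}$ is proper, the induction applies to each singleton separately and one simply glues subnormal series. This is precisely what happens in $D_4$, where $\ncl[G]{r}=\langle r\rangle$ and $\ncl[G]{s}=\langle s, r^2\rangle$ are both proper, giving the series through $\langle r\rangle$. Second, in the remaining case $\ncl[G]{\Delta}=\ncl[G]{g}$, the paper writes down an explicit set $\Delta'$ of commutators with $\ncl[G]{\Delta'}=N'$ (using \cref{lem:derived-series,lem:normal-closure}) and then checks the telescoping identity $\commop[0]{\Delta'}{\Sigma} = \commop[1]{\Delta}{\Sigma}$, so that $\commop[\ast]{\Delta}{\Sigma} = \commop[0]{\Delta}{\Sigma}\cup\commop[\ast]{\Delta'}{\Sigma}$ and the recursion genuinely feeds a set of the right form into the induction. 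Your ``principal obstacle'' paragraph gestures at \cref{lem:derived-series} but does not produce this $\Delta'$ or the telescoping identity, which together constitute the actual content of the proof.
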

\begin{proof}
	We proceed by induction on the size of the group $\ncl[G]{\Delta}$.
  We have $\ncl[G]{\Delta} = \prod_{g \in \Delta} \ncl[G]{g}$, and if $\ncl[G]{\Delta} \neq \ncl[G]{g}$ for some $g \in \Delta$, then $\commop[\ast]{\{g\}}{\Sigma}$ is a polycyclic generating set of the normal subgroup $\ncl[G]{g}$ by induction.
  Hence, if $\ncl[G]{\Delta} \neq \ncl[G]{g}$ holds for all $g \in \Delta$, then $\commop[\ast]{\Delta}{\Sigma} = \bigcup_{g \in \Delta} \commop[\ast]{\{g\}}{\Sigma}$ is a polycyclic generating set of $\ncl[G]{\Delta}$.
  Otherwise, we may as well assume that $\Delta = \{ g \}$.
  By \cref{lem:normal-closure}, $N \coloneqq \ncl[G]{g}$ is generated by $\normalop{g}{\Sigma}{k}$.
  Hence, $N' = \ncl[G]{[\normalop{g}{\Sigma}{k}, \normalop{g}{\Sigma}{k}]}$ by \cref{lem:derived-series} and, therefore, the set
  \[
    \Delta' \coloneqq \big\{ [h, \tilde h] : h \in \commop[0]{\{g\}}{\Sigma}, \tilde h \in \normalop{g}{\Sigma}{k} \big\} \supseteq \big\{ [h,h'] : h,h' \in \normalop{g}{\Sigma}{k} \big\}
  \]
  satisfies $N' = \ncl[G]{\Delta'}$.
  By induction, $\commop[\ast]{\Delta'}{\Sigma}$ is a polycyclic generating set of $N'$.
  Now, observe that $\commop[0]{\Delta'}{\Sigma} = \normalop{\Delta'}{\Sigma}{k} = \commop[1]{\Delta}{\Sigma}$.
  Therefore, the set $\commop[\ast]{\Delta}{\Sigma} = \commop[0]{\Delta}{\Sigma} \cup \commop[\ast]{\Delta'}{\Sigma}$ is indeed a polycyclic generating set of $N = \ncl[G]{\Delta}$.
\end{proof}

\begin{proof}[Proof of \cref{thm:solvable-groups}]
  Let $G \in \pv{H} \sse \Gsol$, $\Sigma \sse G$ a generating set, and $t \in G$.
  For the case of straight-line programs of unbounded width, \cref{pro:derived-series} yields a polycyclic generating set $\Delta \sse G$ with $\tilde c^\infty_G(\Delta; \Sigma) \in \mathcal{O}(\log \abs{G})$.
  By \cref{lem:subnormal_SLP}, applied with Abelian subquotients, we obtain $c^\infty_G(t; \Delta) \in \mathcal{O}(\log \abs{G})$.
  Hence, $\tilde c^\infty_G(t; \Sigma) \leq c^\infty_G(t; \Delta) + \tilde c^\infty_G(\Delta; \Sigma) \in \mathcal{O}(\log \abs{G})$.
  This results in the estimate $\tilde C^\infty_{\pv{H}}(N) \in \mathcal{O}(\log N)$.
  In turn, \cref{lem:inverses_SLP} yields $C^\infty_{\pv{H}}(N) \in \mathcal{O}(\log N)$.
	
  For bounded width, $\Delta = \commop[\ast]{\Sigma}{\Sigma}$ is a polycyclic generating set of $G$ by \cref{lem:pcc-effective}.
  By \cref{lem:subnormal_SLP}, applied with \emph{cyclic} subquotients, we obtain $c^3_G(t; \Delta) \in \mathcal{O}(\log \abs{G})$.
  Hence,
  \[
    c^5_G(t; \Sigma) \leq c^3_G(t; \Delta) \cdot \max_{t' \in \Delta} c^3_G(t'; \Delta) \in \mathcal{O}(\log \abs{G}) \cdot \mathcal{O}(\log^2 \abs{G}) \sse \mathcal{O}(\log^3 \abs{G}) 
  \]
  where the first inequality is due to \cref{lem:augmented-generating-set} and the second uses \cref{lem:pcc-efficient}.
  In fact, we can obtain $c^4_G(t; \Sigma) \in \mathcal{O}(\log^3 \abs{G})$, since we can reuse the utility register from the computation of the elements $t' \in \Delta$ as the additional register for  the fast exponentiation performed in the computation for each subquotient in \cref{lem:subnormal_SLP}.
    This shows $C^4_{\pv{H}}(N) \in \mathcal{O}(\log^3 N)$.
\end{proof}

\section{Completely Regular Semigroups}\label{sec:completely_regular}

In this section, we classify the pseudovarieties of completely regular semigroups that afford efficient compression.
Recall that a semigroup $S$ is \emph{completely regular} if $S$ is a union of groups, and that finite such semigroups form a pseudovariety, which we denote by $\pv{CR} = \pvI{x \approx x^{\omega+1}}$.

\medskip

A semigroup $S$ is a \emph{band of groups} if, for some band $B$, there is a partition $S = \bigcup_{\alpha \in B} S_\alpha$ into disjoint subgroups $S_\alpha \leq S$ such that $S_{\alpha} S_{\beta} \sse S_{\alpha \beta}$ for all $\alpha,\beta \in B$.
Bands of groups form an important subclass of the completely regular semigroups.
Imposing restrictions on the band $B$ involved in the above decomposition naturally leads to the classes of \emph{semilattices of groups} -- commonly referred to as \emph{Clifford semigroups} -- and \emph{normal bands of groups}.

The pseudovariety of normal bands $\pv{NB} = \pvI{x^2 \approx x, uxyv \approx uyxv}$ plays an important role here, as it is the largest permutative pseudovariety of bands~\cite[Theorem~10]{YamadaKimura1958} and the largest pseudovariety of bands containing neither of the obstructions $\pv{LRB}$ nor $\pv{RRB}$~\cite{Biryukov1970,Fennemore1971,Gerhard1970}.

The finite normal bands of groups form a pseudovariety, which we denote by $\pv{G} \pvM \pv{NB}$.\footnote{We will only make use of this description as a Mal'cev product for notational purposes.}
If the involved groups are moreover confined to some pseudovariety $\pv{H} \sse \pv{G}$, then we obtain the pseudovariety $\pv{H} \pvM \pv{NB}$ consisting of all finite normal bands of $\pv{H}$-groups.
Crucially, results on efficient compression for groups in $\pv{H}$ transfer to semigroups in $\pv{H} \pvM \pv{NB}$.

\begin{proposition}\label{pro:normal-bands-of-groups}
  Let $\pv{V} \sse \pv{H} \pvM \pv{NB}$ for some pseudovariety $\pv{H} \sse \pv{G}$. 
  For all $2 \leq w \leq \infty$, 
  \vspace{-1ex}
  \[
    C^{w+2}_{\pv{V}}(N) \in \mathcal{O}(C^w_\pv{H}(N) + \log N)
    \quad\text{and}\quad
    C^{w+1}_{\pv{V}}(N) \in \mathcal{O}(C^w_\pv{H}(N) \cdot \log N).
  \]
\end{proposition}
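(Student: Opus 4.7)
The plan is to handle the band and group structure of $S$ separately, combining Proposition~\ref{pro:permutative} (applicable because the defining identity $uxyv \approx uyxv$ makes $\pv{NB}$ permutative) with the hypothesized $\pv{H}$-compression bounds through an augmented-generating-set argument in the spirit of Lemma~\ref{lem:augmented-generating-set}.

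Fix a surjective homomorphism $\phi\colon S \to B$ witnessing $S \in \pv{H} \pvM \pv{NB}$, with $B \in \pv{NB}$ and each fiber $S_\alpha = \phi^{-1}(\alpha)$ a group in $\pv{H}$. Given $\Sigma \sse S$ and $t \in S$, let $\alpha = \phi(t)$. First I would apply Proposition~\ref{pro:permutative} to $\phi(\Sigma) \sse B$, obtaining a straight-line program of width $2$ and length $\mathcal{O}(\log|B|) \sse \mathcal{O}(\log|S|)$ computing $\phi(t)$; lifting this program to $S$ (replacing each load of $\phi(r)$ by a load of some preimage $r \in \Sigma$) produces an element $p \in S$ with $\phi(p) = \alpha$, hence $p \in S_\alpha$. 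Writing $t = p \cdot g$ with $g \in S_\alpha$, the remaining task is to compute $g$ via a $\pv{H}$-group straight-line program over a suitable generating set of~$S_\alpha$.

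Next I would construct a generating set $\Delta$ of $S_\alpha$ such that each $\delta \in \Delta$ admits a straight-line program over $\Sigma \cup \{p, p^{\omega-1}, p^\omega\}$ of width $2$ and \emph{constant} length. The normal-band identity implies $\alpha\beta\alpha\gamma = \alpha\beta\gamma$ and $\gamma\alpha\beta\alpha = \gamma\beta\alpha$ in $B$, and exploiting this flexibility each $r \in \Sigma$ should be absorbable into an element of $S_\alpha$ via a short ``sandwich'' built from $p$, $p^{\omega-1}$, and at most one or two further generators. With $\Delta$ in hand, the rest is bookkeeping. For the first bound, precompute $p$, $p^{\omega-1}$, and $p^\omega$ in two dedicated ``band'' registers and then run the group straight-line program for $g$ in $w$ additional registers, expanding each load of a $\delta \in \Delta$ inline in $\mathcal{O}(1)$ instructions that reuse the band registers; this yields width $w+2$ and length $\mathcal{O}(C^w_{\pv{H}}(|S|) + \log|S|)$, since the band precomputation costs $\mathcal{O}(\log|S|)$ while the group part contributes $\mathcal{O}(C^w_{\pv{H}}(|S|))$ with only constant overhead per instruction. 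For the second bound, the second inequality of Lemma~\ref{lem:augmented-generating-set} applied with the same $\Delta$ directly gives width $w+1$ and length $\mathcal{O}(C^w_{\pv{H}}(|S|) \cdot \log|S|)$.

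The hard part will be the construction of $\Delta$. For a generic normal band of groups the naive candidate $p r p^\omega$ has band image $\alpha\phi(r)\alpha$, which is typically strictly below $\alpha$ in the natural partial order on $B$, so it lives in a smaller group. Overcoming this likely requires a case analysis guided by the structure theory of normal bands -- notably their description as strong semilattices of rectangular bands -- showing that every $r \in \Sigma$ can be combined with a bounded number of additional generators to produce an element with band image exactly $\alpha$, and that the resulting elements collectively generate $S_\alpha$ as a group. Once this structural input is secured, the complexity accounting outlined above yields both bounds uniformly in $w$.
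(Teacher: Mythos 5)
Your overall architecture matches the paper's: decompose $S = \bigcup_{\alpha\in B} S_\alpha$, use \cref{pro:permutative} on $\pv{NB}$ to cheaply land in the correct fiber $S_\alpha$, then run a width-$w$ group straight-line program over a generating set of $S_\alpha$ whose members can be built in $\mathcal{O}(1)$ or $\mathcal{O}(\log N)$ steps from $\Sigma$ and the idempotent $e_\alpha = p^\omega$. The complexity bookkeeping you sketch for both width bounds is essentially what the paper carries out. However, the step you flag as ``the hard part'' is indeed where your argument stops and a genuine gap remains: you have not established that your sandwiched elements actually generate $S_\alpha$, and your diagnosis of the obstacle is somewhat off-target.

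Specifically, the worry that $p\,r\,p^\omega$ has band image $\alpha\,\phi(r)\,\alpha$ ``typically strictly below $\alpha$'' is a red herring, not something requiring a case analysis over strong semilattices of rectangular bands. Since any $t\in S_\alpha$ is a product $s_1\cdots s_n$ of generators, each $s_i$ satisfies $t \leq_{\cJ} s_i$, hence $\phi(s_i)\geq_{\cJ}\alpha$, i.e.\ $\alpha\,\phi(s_i)\,\alpha = \alpha$; so for precisely the generators that matter, $e_\alpha s_i e_\alpha$ already lies in $S_\alpha$. The actual subtlety the paper handles in \cref{lem:normal-bands-of-groups_generators} is different: sandwiching \emph{single} generators is not enough. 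After inserting idempotents $e_{\alpha\beta_i}$ and $e_{\beta_i\alpha}$ between consecutive factors, one is left with junction terms $e_{\alpha\beta_i}e_{\beta_j\alpha}$ that do not reduce to products of the $e_\alpha s_i e_\alpha$ alone; the identity
\[
  e_{\alpha\beta_i}e_{\beta_j\alpha} = (e_\alpha s_i e_\alpha)^{\omega-1}\,(e_\alpha s_i s_j e_\alpha)\,(e_\alpha s_j e_\alpha)^{\omega-1}
\]
shows that products of \emph{length two} of generators must also be sandwiched, so the correct generating set is $\Sigma_\alpha = \{ e_\alpha s e_\alpha : s\in\Sigma^{\leq 2},\; s\geq_{\cJ} e_\alpha\}$. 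Your proposal gestures at ``at most one or two further generators'' but neither states this precisely nor proves that the resulting set generates $S_\alpha$. Once that lemma is in place, your accounting goes through; without it, the central claim of the proposition is unproved.
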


Note that the first of these bounds is, except for the increase in width, essentially optimal.
Indeed, if $\pv{H} \sse \pv{G}$ is not the trivial pseudovariety, then $C^\infty_{\pv{H}}(N) \in \Omega(\log N)$ by \cref{lem:locally-trivial}.

For normal bands of solvable groups, we obtain the following due to \cref{thm:solvable-groups}.

\begin{corollary}\label{cor:normal-bands-of-solvable-groups}
  Let $\pv{V} \sse \Gsol \pvM \pv{NB}$ be a pseudovariety.
  Then 
  \vspace{-1ex}
  \[
    C^\infty_{\pv{V}}(N) \in \mathcal{O}(\log N)
    \quad\text{and}\quad
    C^6_{\pv{V}}(N) \in \mathcal{O}(\log^3 N),\;
    C^5_{\pv{V}}(N) \in \mathcal{O}(\log^4 N).
  \]
\end{corollary}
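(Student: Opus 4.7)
The proof will be a direct combination of Proposition \ref{pro:normal-bands-of-groups}, applied with $\pv{H} = \Gsol$, and the bounds on $C_\pv{H}^w(N)$ supplied by Theorem \ref{thm:solvable-groups}. Since $\pv{V} \sse \Gsol \pvM \pv{NB}$ by hypothesis, both inequalities of Proposition \ref{pro:normal-bands-of-groups} are available verbatim with $\pv{H} = \Gsol$, and there is no real obstacle beyond substituting the right quantities in the right places.

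For the unbounded-width bound, the plan is to instantiate the first inequality of Proposition \ref{pro:normal-bands-of-groups} with $w = \infty$, obtaining $C^\infty_{\pv{V}}(N) \in \mathcal{O}(C^\infty_{\Gsol}(N) + \log N)$, and then to invoke $C^\infty_{\Gsol}(N) \in \mathcal{O}(\log N)$ from Theorem \ref{thm:solvable-groups}. The two logarithmic terms collapse into $\mathcal{O}(\log N)$, yielding the first claim.

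For the two bounded-width bounds, I would set $w = 4$ in both inequalities of the Proposition. The first inequality yields
\[
  C^6_{\pv{V}}(N) \in \mathcal{O}(C^4_{\Gsol}(N) + \log N),
\]
and, since Theorem \ref{thm:solvable-groups} supplies $C^4_{\Gsol}(N) \in \mathcal{O}(\log^3 N)$, the dominating term is $\log^3 N$ and we obtain $C^6_{\pv{V}}(N) \in \mathcal{O}(\log^3 N)$. The second inequality gives
\[
  C^5_{\pv{V}}(N) \in \mathcal{O}(C^4_{\Gsol}(N) \cdot \log N) \sse \mathcal{O}(\log^3 N \cdot \log N) = \mathcal{O}(\log^4 N),
\]
which is the third claim.

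The only subtlety worth flagging is the choice of which half of Proposition \ref{pro:normal-bands-of-groups} to invoke: the additive bound is preferable when two extra registers can be spared, since it preserves the leading-order length $\mathcal{O}(\log^3 N)$; the multiplicative bound is preferable when only one extra register is available, at the cost of an additional $\log N$ factor in the length. Aside from this bookkeeping, the corollary is an immediate substitution, so no new technical machinery is required.
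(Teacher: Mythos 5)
Your proposal is correct and takes precisely the route the paper intends: the corollary is stated immediately after Proposition~\ref{pro:normal-bands-of-groups} and is attributed to Theorem~\ref{thm:solvable-groups}, so the intended derivation is exactly the substitution of $\pv{H} = \Gsol$ and $w \in \{4, \infty\}$ that you carry out. Both index bookkeeping and the choice of which inequality to use for which width match what the paper's two results force.
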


Our proof of \cref{pro:normal-bands-of-groups} relies on the following construction of generating sets for the subgroups involved in a decomposition as a normal band of groups.

\begin{lemma}\label{lem:normal-bands-of-groups_generators}
  Let $S = \bigcup_{\alpha \in B} S_\alpha$ be a finite normal band of groups generated by a set $\Sigma \sse S$.
   Then $S_\alpha$ is generated by
 $\Sigma_\alpha = \{ e_\alpha s e_\alpha \in S : s \in \Sigma^{\leq 2}, s \geq_{\mathcal{J}} e_\alpha \}$ where $e_\alpha = e^2_\alpha \in S_\alpha$.
\end{lemma}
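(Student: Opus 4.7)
The plan is to pick an arbitrary $t \in S_\alpha$, write $t = t_1 \cdots t_n$ as a product of generators from $\Sigma$, and then rewrite
\[
  t \;=\; e_\alpha t e_\alpha \;=\; e_\alpha t_1 \cdots t_n e_\alpha \;=\; (e_\alpha t_1 e_\alpha)(e_\alpha t_2 e_\alpha) \cdots (e_\alpha t_n e_\alpha)
\]
as a product of elements of $\Sigma_\alpha$. The case $t = e_\alpha$ is covered by the same argument applied to any factorization of $e_\alpha$ as a product of generators.

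For the $\mathcal{J}$-conditions I would use the surjective homomorphism $\varphi \colon S \to B$ sending $s \in S_\beta$ to $\beta$. Setting $\beta_i = \varphi(t_i)$, we have $\alpha = \beta_1 \cdots \beta_n$ in $B$, and since products in any band descend in the $\mathcal{J}$-order, $\beta_i \geq_\mathcal{J} \alpha$ in $B$ for each $i$. Lifting this back to $S$: as $e_\alpha$ is a power of $t \in S_\alpha$ and $t = (t_1 \cdots t_{i-1}) \cdot t_i \cdot (t_{i+1} \cdots t_n)$, we get $e_\alpha \in S^1 t_i S^1$, so $t_i \geq_\mathcal{J} e_\alpha$ in $S$. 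In particular, each $t_i \in \Sigma^{\leq 1} \sse \Sigma^{\leq 2}$ witnesses that $e_\alpha t_i e_\alpha$ is a legitimate element of $\Sigma_\alpha$.

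The main obstacle is the rewriting itself, which by induction reduces to the single identity
\[
  e_\alpha s_1 s_2 e_\alpha \;=\; (e_\alpha s_1 e_\alpha)(e_\alpha s_2 e_\alpha)
\]
for any $s_1, s_2 \geq_\mathcal{J} e_\alpha$. To establish it I would appeal to the classical structure theorem that every normal band of groups is a strong semilattice of rectangular groups indexed by the semilattice $B/\mathcal{J}$, with multiplication implemented via connecting homomorphisms to the lowest $\mathcal{J}$-class encountered. Both sides of the identity then lie in the unique rectangular group containing $S_\alpha$, and once the $s_i$ are pushed down to that component via the connecting homomorphisms, verification is an immediate calculation in a Rees-style presentation $R = I \times G \times \Lambda$: writing $e_\alpha = (i_0, 1_G, \lambda_0)$ and the reduced $s_k = (i_k, g_k, \lambda_k)$, both sides evaluate to $(i_0, g_1 g_2, \lambda_0)$.

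Combining the two steps yields $t = \prod_{i=1}^n (e_\alpha t_i e_\alpha) \in \gen[S]{\Sigma_\alpha}$, proving that $\Sigma_\alpha$ generates $S_\alpha$.
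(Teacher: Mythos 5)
Your key rewriting step is false, and the proposed structure theorem you use to justify it is misstated. You claim that
\[
  e_\alpha s_1 s_2 e_\alpha = (e_\alpha s_1 e_\alpha)(e_\alpha s_2 e_\alpha)
  \qquad\text{for } s_1, s_2 \geq_{\mathcal{J}} e_\alpha,
\]
which, if true, would let you prove the lemma with $\Sigma^{\leq 1}$ in place of $\Sigma^{\leq 2}$ -- already a red flag, since the statement deliberately uses $\Sigma^{\leq 2}$. The appeal to ``every normal band of groups is a strong semilattice of \emph{rectangular groups}'' is not correct: normal bands of groups (i.e.\ normal cryptogroups) are strong semilattices of \emph{completely simple} semigroups, and those components need not be orthodox, so they need not be rectangular groups. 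In a component $M(I, G, \Lambda; P)$ with a nontrivial sandwich matrix, your Rees calculation breaks: the left side produces the entry $p_{\lambda_1 i_2}$, the right side produces $p_{\lambda_1 i_0}\,p_{\lambda_0 i_0}^{-1}\,p_{\lambda_0 i_2}$, and these are different in general. A concrete witness: let $S = M(\{1,2\}, \Z/2, \{1,2\}; P)$ with $P = \left(\begin{smallmatrix}0 & 0\\ 0 & 1\end{smallmatrix}\right)$, a (single completely simple, hence normal) band of groups over $\Z/2$. Taking $\Sigma = \{\, a = (1,0,2),\; b = (2,0,1) \,\}$ (which generates $S$), $\alpha$ with $e_\alpha = (1,0,1)$, and $S_\alpha = \{(1,0,1),(1,1,1)\}$, one computes $e_\alpha a e_\alpha = e_\alpha b e_\alpha = e_\alpha$, so $\{\,e_\alpha s e_\alpha : s \in \Sigma^{\leq 1}\,\}$ generates only $\{e_\alpha\} \ne S_\alpha$; meanwhile $e_\alpha a b\, e_\alpha = (1,1,1) \ne e_\alpha = (e_\alpha a e_\alpha)(e_\alpha b e_\alpha)$.

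This is exactly why the lemma includes $\Sigma^{\leq 2}$: the sandwich-matrix contributions hidden between consecutive generators can only be recovered from the length-two products $s_i s_j$. The paper's proof does not merge adjacent factors directly; it inserts intermediate idempotents to get $t = (e_\alpha s_1 e_\alpha)\, e_{\alpha\beta_1}e_{\beta_2\alpha}\,(e_\alpha s_2 e_\alpha)\cdots$, and then expresses the connecting blocks $e_{\alpha\beta_i} e_{\beta_j\alpha}$ inside $\gen{\Sigma_\alpha}$ via $(e_\alpha s_i e_\alpha)^{\omega-1}(e_\alpha s_i s_j e_\alpha)(e_\alpha s_j e_\alpha)^{\omega-1}$, where the middle factor is the unavoidable $\Sigma^{\leq 2}$-element. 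The $\mathcal{J}$-bookkeeping in your proposal is fine, but without this device the argument does not go through.
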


In the above, $s \geq_{\mathcal{J}} e_\alpha$ refers to Green's preorder $\geq_{\mathcal{J}}$, meaning that $e_\alpha = usv$ holds for some $u,v \in S \cup \{ 1 \}$.
Note that $s \in S_\beta$ satisfies $s \geq_{\mathcal J} e_\alpha$ if and only if $\beta \geq_{\mathcal J} \alpha$ in $B$.
Since $B$ is a normal band, this is furthermore equivalent to $\alpha \beta \alpha = \alpha$.
Hence, $\Sigma_\alpha \sse S_\alpha$.
\begin{proof}
  Suppose that $t \in S_\alpha$ and $t = s_1 \cdots s_n$ with $s_1, \dots, s_n \in \Sigma$.
  Then $e_\alpha \mathrel{\mathcal{H}} t \leq_{\mathcal{J}} s_i$ and, thus, $e_\alpha \leq_{\mathcal{J}} s_i$ for all $1 \leq i \leq n$.
  Considering the expression $t = e_\alpha s_1 \cdots s_n e_\alpha$, we note that every prefix of the form $e_\alpha s_1 \cdots s_i$ belongs to $S_{\alpha \beta_i}$ where $s_i \in S_{\beta_i}$ and, similarly, that every suffix of the form $s_i \cdots s_n e_\alpha$ belongs to $S_{\beta_i \alpha}$.
  Upon inserting the elements $e_{\alpha\beta_i}$~and~$e_{\beta_i \alpha}$, we obtain the expression $t = e_\alpha s_1 e_{\alpha\beta_1} \cdots e_{\beta_i\alpha} s_i e_{\alpha\beta_i} \cdots e_{\beta_n \alpha} s_n e_\alpha,$
  wherein the pre- and suffixes of the form $e_\alpha \cdots e_{\beta_i\alpha}$ and $e_{\alpha\beta_i} \cdots e_\alpha$ belong to $S_\alpha$.
  Inserting the element $e_\alpha$ yields
  \[
    t =  (e_\alpha s_1 e_{\alpha }) e_{\alpha\beta_1} \cdots e_{\beta_i\alpha} (e_\alpha s_i e_\alpha) e_{\alpha\beta_i} \cdots e_{\beta_n \alpha} (e_\alpha s_n e_\alpha).
  \]
  
  We have thus written $t$ as a product of elements $e_\alpha s_i e_\alpha \in \Sigma_\alpha$ as well as elements of the form $e_{\alpha \beta_i}e_{\beta_j \alpha}$.
  The latter also belong to $\gen[S]{\Sigma_\alpha} \subseteq S_\alpha$, as such an element can be written as
  \begin{align*}
    e_{\alpha\beta_i}e_{\beta_j\alpha} = (e_{\alpha}s_i)^\omega (s_j e_{\alpha})^\omega 
    &= (e_{\alpha}s_ie_{\alpha})^{\omega-1} (e_\alpha s_is_j e_{\alpha}) (e_{\alpha}s_j e_{\alpha})^{\omega-1}. \qedhere
\end{align*}
\end{proof}

\begin{proof}[Proof of \cref{pro:normal-bands-of-groups}]
  Consider a semigroup $S \in \pv{V}$, a generating set $\Sigma \sse S$, and some element $t \in S$.
  Since $\pv{V} \sse \pv{H} \pvM \pv{NB}$, the semigroup $S$ admits a decomposition $S = \bigcup_{\alpha \in B} S_\alpha$ into subgroups $S_\alpha \leq S$ belonging to $\pv{H}$ for some normal band $B$ such that the map $\pi \colon S \to B$ sending every element of a subgroup $S_\alpha$ to the corresponding $\alpha \in B$ is a homomorphism.

  \smallskip

  Let $e_\alpha$ be the neutral element the subgroups $S_\alpha \leq S$ for some $\alpha \in B$.
  We claim that this element can be efficiently computed by a straight-line program, viz., $c^2_S(e_\alpha; \Sigma) \in \mathcal{O}(\log \abs{S})$.
  To see this, first note that the pseudovariety $\pv{NB} = \pvI{x^2 \approx x, uxyv \approx uyxv}$ is permutative and, therefore, $c^2_B(\alpha; \pi(\Sigma)) \in \mathcal{O}(\log \abs{B}) \sse \mathcal{O}(\log \abs{S})$ by \cref{pro:permutative}.
  Next, take any straight-line program in $B$ over $\pi(\Sigma)$ that achieves this bound and lift it to a straight-line program in $S$ over $\Sigma$.
  The lifted straight-line program computes some element $t_\alpha \in S_\alpha$, and we then obtain $e_\alpha = t_\alpha^\omega$ using fast exponentiation in $\mathcal{O}(\log \abs{S_\alpha}) \sse \mathcal{O}(\log \abs{S})$ instructions.

  \smallskip

  Let us now fix the unique $\alpha \in B$ with $t \in S_\alpha$, and denote by $\Sigma_\alpha$ the generating set of $S_\alpha$ from \cref{lem:normal-bands-of-groups_generators}.
  Further, let us fix a straight-line program $\mathcal{A}$ computing $t$ over $\Sigma_\alpha \sse S_\alpha$ of width at most $w$ and length $c^w_{S_\alpha}(t; \Sigma_\alpha)$.
  For the case that two additional registers are permitted, we modify this straight-line program as follows.
  First, we compute $e_\alpha \in S_\alpha$ as described above, and keep it in one of the additional registers throughout the computation.
  From this point on, we emulate $\mathcal{A}$ on the original registers and, whenever we encounter an assignment $r_i \gets s$ for some generator $s \in \Sigma_\alpha$, we use the register $r_i$ and the additional register not holding $e_\alpha$ to compute $s = e_\alpha s_1 e_\alpha$ or $s = e_\alpha s_1s_2 e_\alpha$ with $s_1, s_2 \in \Sigma$ as appropriate.
  This yields the estimate $c^{w+2}_S(t; \Sigma) \leq 5 \cdot c^{w}_{S_\alpha}(t; \Sigma_\alpha) + \mathcal{O}(\log \abs{S})$.

  We proceed similarly if only one additional register is permitted, but temporarily overwrite the element $e_\alpha$ when producing a generator of the form $e_\alpha s_1s_2 e_\alpha$ with $s_1, s_2 \in \Sigma$.
  Specifically, suppose that $r_e$ holds $e_\alpha$ and that we want to emulate the assignment $r_i \gets e_\alpha s_1 s_2 e_\alpha$.
  Then we first perform the instructions $r_i \gets s_1$; $r_i \gets r_e \cdot r_i$; $r_e \gets s_2$; $r_i \gets r_i \cdot r_e$.
  At this point, the target register $r_i$ holds $e_\alpha s_1 s_2$.
  Using one of the original registers $r_{j} \neq r_i$, which we may assume to contain an element $s \in S_\alpha$, we recompute $e_\alpha = s^\omega$ in the register $r_e$ using fast exponentiation (i.e., $r_e \gets r_j \cdot r_j$ followed by a suitable sequence of the instructions $r_e \gets r_e \cdot r_e$ and $r_e \gets r_e \cdot r_j$.)
  Finally, $r_i \gets r_i \cdot r_e$ produces $e_\alpha s_1s_2 e_\alpha$ in the target register.
  This constructions yields the estimate $c^{w+1}_S(t; \Sigma) \in \mathcal{O}(c^{w}_{S_\alpha}(t; \Sigma_\alpha) \cdot \log \abs{S})$.
\end{proof}

Summarizing the preceding results, we arrive at the following characterization for pseudovarieties of completely regular semigroups with efficient compression.

\begin{theorem}\label{thm:main-cregular}
  Let $\pv{V} \sse \pv{CR}$ be a pseudovariety.
  The following are equivalent.
  \begin{bracketenumerate}
    \item The pseudovariety $\pv{V}$ affords efficient compression.
    \item The pseudovariety $\pv{V}$ contains neither $\pv{LRB}$ nor $\pv{RRB}$.
    \item The pseudovariety $\pv{V}$ comprises only normal bands of groups; that is, $\pv{V} \sse \pv{G}\pvM\pv{NB}$.
    \item The pseudovariety $\pv{V}$ admits straight-line programs of length $\mathcal{O}(\smash{\log}^2 N)$.
  \end{bracketenumerate}
\end{theorem}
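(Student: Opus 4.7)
The plan is to prove the cyclic chain of implications $(4) \Rightarrow (1) \Rightarrow (2) \Rightarrow (3) \Rightarrow (4)$, relying almost entirely on results already assembled in the excerpt. The implication $(4) \Rightarrow (1)$ is immediate from the definition of efficient compression, since $\mathcal{O}(\log^2 N) \sse \mathcal{O}(\polylog N)$. The implication $(1) \Rightarrow (2)$ is the contrapositive of \cref{lem:obstructions}: containment of any of $\pv{LRB}$, $\pv{RRB}$, or $\pv{T}$ forces $C^\infty_\pv{V}(N) \in \Omega(\sqrt{N})$, but since $\pv{T}$ consists of nilpotent semigroups and $\pv{V} \sse \pv{CR}$ forbids nontrivial nilpotent subsemigroups, only the first two obstructions need to be excluded here.

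For $(2) \Rightarrow (3)$, I would invoke the structural result of Rasin~\cite[Proposition~4]{Rasin81} mentioned in the outline of the main theorem: a pseudovariety of completely regular semigroups avoiding both $\pv{LRB}$ and $\pv{RRB}$ consists entirely of normal bands of groups, i.e.\ lies inside $\pv{G} \pvM \pv{NB}$. This is purely structural and requires no additional compression-theoretic reasoning; it essentially uses that $\pv{NB}$ is the largest pseudovariety of bands excluding both $\pv{LRB}$ and $\pv{RRB}$, so that the band quotient of every $S \in \pv{V} \sse \pv{CR}$ is a normal band, and then that the Mal'cev product $\pv{G} \pvM \pv{NB}$ indeed captures all normal bands of groups in $\pv{CR}$.

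Finally, for $(3) \Rightarrow (4)$ I would apply \cref{pro:normal-bands-of-groups} with $\pv{H} = \pv{G}$, the pseudovariety of all finite groups, which yields
\[
  C^{\infty}_{\pv{V}}(N) \in \mathcal{O}\bigl(C^{\infty}_{\pv{G}}(N) + \log N\bigr).
\]
The Reachability Lemma of Babai and Szemerédi gives $C^{\infty}_{\pv{G}}(N) \in \mathcal{O}(\log^2 N)$, so one obtains $C^{\infty}_{\pv{V}}(N) \in \mathcal{O}(\log^2 N)$, as required.

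There is no real obstacle here: the work has already been done in the preceding sections. The only delicate point is to verify that the cited form of Rasin's theorem applies verbatim to pseudovarieties (rather than to individual semigroups), and that the reduction through \cref{pro:normal-bands-of-groups} indeed produces group straight-line programs in finite groups satisfying the hypotheses of the Reachability Lemma; both are routine once the relevant references are in place.
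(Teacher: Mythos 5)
Your proof follows exactly the same chain of implications $(4)\Rightarrow(1)\Rightarrow(2)\Rightarrow(3)\Rightarrow(4)$ using the same ingredients as the paper's own proof: \cref{lem:obstructions} for $(1)\Rightarrow(2)$, Rasin's \cite[Proposition~4]{Rasin81} for $(2)\Rightarrow(3)$, and \cref{pro:normal-bands-of-groups} with $\pv{H}=\pv{G}$ together with the Reachability Lemma for $(3)\Rightarrow(4)$. The argument is correct; the only superfluous step is the remark about $\pv{T}$ being excluded automatically since $\pv{V}\sse\pv{CR}$, which is true but not needed to deduce statement $(2)$ from \cref{lem:obstructions}.
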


\begin{proof}
  \Cref{lem:obstructions} shows that $(1) \Rightarrow (2)$. 
  The implication $(2) \Rightarrow (3)$ is a well-known result by Rasin~\cite[Proposition~4]{Rasin81}: If $\pv{V} \subseteq \pv{CR}$ and $\pv{V} \cap \pv{B} \subseteq \pv{NB}$, then $\pv{V} \subseteq \pv{G} \pvM \pv{NB}$.
  (Recall that for any pseudovariety $\pv{W} \subseteq \pv{B}$ either $\pv{W} \subseteq \pv{NB}$ holds, or $\pv{W}$ contains $\pv{LRB}$ or $\pv{RRB}$.)

  \Cref{pro:normal-bands-of-groups} and the Reachability Lemma of Babai and Szemer\'edi~\cite[Theorem~3.1]{BabaiS84} combine to show the implication $(3) \Rightarrow (4)$.
  Finally, $(4) \Rightarrow (1)$ holds by definition.
\end{proof}

\section{General Semigroups}\label{sec:general}

In this section, we finally prove our main theorem (\cref{thm:main-intro}).
The following restatement also reveals the close connection between the semigroup and group cases.

\begin{theorem}\label{thm:main}
  Let $\pv{V}$ be a pseudovariety, and let $\pv{H} = \pv{V} \cap \pv{G}$.
  The following are equivalent.
  \begin{bracketenumerate}
    \item The pseudovariety $\pv{V}$ affords efficient compression.
    \item The pseudovariety $\pv{V}$ contains neither $\pv{LRB}$, $\pv{RRB}$, nor $\pv{T}$.
    \item The pseudovariety $\pv{V}$ satisfies the following for every $2 \leq w \leq \infty$:
      \[
        C^{w+3}_{\pv{V}}(N) \in \mathcal{O}(C^w_{\pv{H}}(N) + \log N)
        \quad\text{and}\quad
        C^{w+2}_{\pv{V}}(N) \in \mathcal{O}(C^w_{\pv{H}}(N) \cdot \log N).
      \]
      In particular, $\pv{V}$ admits straight-line programs of length $\mathcal{O}(\smash{\log}^2 N)$. 
  \end{bracketenumerate}
\end{theorem}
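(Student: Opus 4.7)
The plan is to prove the cycle $(1) \Rightarrow (2) \Rightarrow (3) \Rightarrow (1)$. The implication $(1) \Rightarrow (2)$ is immediate from \cref{lem:obstructions}, since each of $\pv{LRB}$, $\pv{RRB}$, $\pv{T}$ requires straight-line programs of length $\Omega(\sqrt{N})$, incompatible with $\mathcal{O}(\polylog N)$. The implication $(3) \Rightarrow (1)$ follows from the Reachability Lemma of Babai and Szemer\'edi: since $\pv{H} \sse \pv{G}$, we have $C^\infty_{\pv{H}}(N) \leq C^\infty_{\pv{G}}(N) \in \mathcal{O}(\smash{\log}^2 N)$, so substituting $w = \infty$ into either bound of~(3) yields $C^\infty_{\pv{V}}(N) \in \mathcal{O}(\smash{\log}^2 N)$. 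All the weight therefore rests on $(2) \Rightarrow (3)$.

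For $(2) \Rightarrow (3)$, the first move is to invoke the structural dichotomy of~\cite[Corollary~16]{Thumm2025}: under the assumption $\pv{T} \not\sse \pv{V}$, the pseudovariety $\pv{V}$ is either \emph{permutative}, or \emph{almost completely regular}, or both. In the permutative case, \cref{pro:permutative} directly gives $C^2_{\pv{V}}(N) \in \mathcal{O}(\log N)$; since $C^w_{\pv{V}}(N)$ is non-increasing in~$w$, both inequalities of~(3) follow trivially for every $w \geq 2$.

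In the almost completely regular case, the defining identity, instantiated with $i = 1$ and $j = n$, specializes to $x_1 \cdots x_n \approx (x_1 \cdots x_n)^{\omega + 1}$. Hence every product of $n$ elements in $S \in \pv{V}$ is completely regular, so the ideal $S^n$ lies in $\pv{CR}$; that is, $\pv{V} \sse \pv{CR} \pvM \pv{N}_n$ for a uniform $n$. Setting $\pv{V}' = \pv{V} \cap \pv{CR}$, \cref{lem:nilpotent} supplies $C^{w+1}_{\pv{V}}(N) \in \mathcal{O}(C^w_{\pv{V}'}(N))$. The assumption $\pv{LRB}, \pv{RRB} \not\sse \pv{V}$ combined with Rasin's theorem~\cite[Proposition~4]{Rasin81} (already exploited in the proof of \cref{thm:main-cregular}) then yields $\pv{V}' \sse \pv{G} \pvM \pv{NB}$. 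Moreover, the groups $S_\alpha$ occurring in a normal-band-of-groups decomposition $S = \bigcup_{\alpha \in B} S_\alpha$ with $S \in \pv{V}'$ are subsemigroups of $S$ and thus members of $\pv{V} \cap \pv{G} = \pv{H}$, so in fact $\pv{V}' \sse \pv{H} \pvM \pv{NB}$. Applying \cref{pro:normal-bands-of-groups} and composing with the width shift from \cref{lem:nilpotent} produces exactly the two bounds claimed in~(3).

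The main obstacle is the proper invocation of the dichotomy from~\cite{Thumm2025}: everything downstream amounts to a careful chaining of already-established reductions (\cref{lem:nilpotent}, \cref{pro:normal-bands-of-groups}, \cref{pro:permutative}), but the whole architecture of the argument depends on the non-trivial fact that $\pv{T} \not\sse \pv{V}$ alone is enough to force $\pv{V}$ into one of the two tractable classes. A smaller care point is checking that \emph{almost complete regularity} yields a \emph{uniform} nilpotency exponent~$n$, so that the containment $\pv{V} \sse \pv{CR} \pvM \pv{N}_n$ holds for a single~$n$; this follows directly by instantiating the defining identity at the extreme indices $(i,j) = (1,n)$.
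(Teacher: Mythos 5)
Your proof has a genuine gap in the almost completely regular case, specifically in the sentence ``the defining identity, instantiated with $i = 1$ and $j = n$, specializes to $x_1 \cdots x_n \approx (x_1 \cdots x_n)^{\omega + 1}$.'' The indices $i,j$ in \cref{thm:excluding-T} are not free: the theorem asserts that $\pv{V}$ satisfies the identity for \emph{some} fixed $1 \leq i \leq j \leq n$, not that $(i,j)$ may be chosen at will. An almost completely regular pseudovariety need not satisfy the $(i,j)=(1,n)$ form, and indeed need not be contained in $\pv{CR} \pvM \pv{N}_m$ for any $m$. Concretely, let $S = \{a,b,c,d,0\}$ be the transition semigroup of the language $ab^*c$, i.e.\ $b^2=b$, $ab=a$, $bc=c$, $ac=d$, and all other binary products equal $0$. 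One checks directly that $S$ satisfies the identity $xyz \approx xy^{\omega+1}z$: every three-fold product whose middle factor lies in $\{a,c,d,0\}$ is $0$, and $b^{\omega+1}=b$. Yet $d = abc = ab^{k-2}c \in S^k$ for every $k \geq 3$, while $d^2 = 0 \neq d$, so $d \notin I(S)$. Hence $S^k \not\sse I(S)$ for all $k$, i.e.\ $S \notin \pv{CR} \pvM \pv{N}_k$, which invalidates the step $\pv{V} \sse \pv{CR} \pvM \pv{N}_n$ and hence your invocation of \cref{lem:nilpotent} with $\pv{V}' = \pv{V}\cap\pv{CR}$.

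The paper's own argument sidesteps exactly this point. After reducing to $\pv{V}' = \pv{V} \cap \pvI{xyz \approx xy^{\omega+1}z}$ via \cref{lem:nilpotent} (which is a weaker, correct containment: every product of $k$ factors in $S \in \pv{V}$ lies in a subsemigroup satisfying $xyz \approx xy^{\omega+1}z$, not in $I(S)$), the proof does \emph{not} claim $S$ is completely regular. Instead, for $t = u\,s_1 \cdots s_n\,v$, it builds the \emph{subsemigroup} $\tilde S = \langle s_1^{\omega+1}, \dots, s_n^{\omega+1}\rangle$, which lies inside $I(S)$ because $I(S)$ is a subsemigroup (\cref{lem:cregular_subsemigroup}), and hence $\tilde S \in \pv{H}\pvM\pv{NB}$ by \cref{thm:main-cregular}. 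A straight-line program for $\tilde s = \tilde s_1 \cdots \tilde s_n$ over the $\tilde s_i$ (via \cref{pro:normal-bands-of-groups}) is then converted into one over the $s_i$ by textually replacing each assignment $r_k \gets \tilde s_i$ with $r_k \gets s_i$; the identity $xyz \approx xy^{\omega+1}z$ guarantees that the replaced program still yields an $s$ with $t = u\,s\,v$. This replacement/lifting step is the crux, and it has no counterpart in your reduction chain: it is needed precisely because $t$ itself is generally not a completely regular element.
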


Our proof of \cref{thm:main} is based on an observation on the structure of a pseudovariety~$\pv{V}$ with $\pv{T} \not\sse \pv{V}$ that was recently obtained by the first author~\cite[Theorem~A, Theorem~B]{Thumm2025}.

\begin{theorem}\label{thm:excluding-T}
  Let $\pv{V}$ be a pseudovariety.
  If $\pv{T} \not\sse \pv{V}$, then \mbox{one of the following holds.}
  \begin{bracketenumerate}
    \item There exist $1 \leq i \leq j \leq n$ such that
      $\pv{V} \models x_1 \cdots x_n \approx x_1 \cdots x_{i-1} (x_i \cdots x_j)^{\omega + 1} x_{j+1} \cdots x_n$.
    \item There exists $\sigma \in \mathrm{Sym}(n)$ with $\sigma \neq \mathrm{id}$ such that $\pv{V} \models x_1 \cdots x_n \approx x_{\sigma(1)} \cdots x_{\sigma(n)}$.
  \end{bracketenumerate}
\end{theorem}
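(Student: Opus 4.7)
The plan is to extract a single profinite identity from the hypothesis $\pv{T} \not\sse \pv{V}$ and then transform it, by variable substitution and profinite iteration, into an identity of one of the two shapes claimed in (1) and (2). The key enabling observation concerns the rigidity of the free pro-$\pv{T}$ semigroup $\Omega_n(\pv{T})$: its elements are precisely the injective words in its $n$ generators together with a single zero, and the defining relations $x^2 \approx 0$, $xyx \approx 0$ collapse every noninjective product, as well as every profinite $\omega$-power of a nontrivial element, to this zero. Consequently, a profinite identity $u \approx v$ fails in some $\Omega_n(\pv{T})$ if and only if $u$ and $v$ evaluate to distinct elements there, and this forces at least one side of $u \approx v$ to be a plain injective word (containing no $\omega$-powers at all).

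First I would apply Reiterman's theorem to the hypothesis $\pv{T} \not\sse \pv{V}$ to obtain a profinite identity $u \approx v$ that holds throughout $\pv{V}$ but fails in some $\Omega_n(\pv{T})$. After possibly swapping sides and renaming variables, we may assume $u = x_1 x_2 \cdots x_n$ is a plain injective word and $v$ is a profinite term that evaluates to something different from $u$ in $\Omega_n(\pv{T})$.

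Then I would carry out a case distinction on the shape of $v$. The immediate case is when $v$ is again a plain injective word with exactly the same variable support as $u$: then $v = x_{\sigma(1)} \cdots x_{\sigma(n)}$ for some nontrivial $\sigma \in \mathrm{Sym}(n)$, and case (2) holds on the nose. In every remaining case~--~where $v$ repeats a variable, lacks one of the variables of $u$, or genuinely involves an $\omega$-power~--~my approach is to exhibit two distinct positions in an expression for $v$ that carry the same letter, to substitute variables so that the segment between these positions aligns with a consecutive block $x_i, \ldots, x_j$ of the left-hand side, and finally to iterate the identity $u \approx v$ by substituting $u$ back into itself so as to inflate the intervening block to an arbitrary power. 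Passing to the profinite limit upgrades this unbounded repetition to the desired factor $(x_i \cdots x_j)^{\omega + 1}$. Missing variables and explicit $\omega$-powers on the right-hand side are reduced to the repeated-variable case by substituting a product of several variables for the offending symbol, thereby manufacturing the required repetition.

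The main obstacle will be the alignment step in the second case: showing that the substitutions can always be chosen so that the resulting $\omega$-power anchors on a \emph{consecutive} block of the left-hand side rather than on some permuted or skewed pattern. When such an alignment is obstructed~--~for instance when the copy of the repeated letter is separated from its mate by a cyclic rearrangement of other variables rather than by a bare segment~--~the obstruction itself constitutes a nontrivial permutation identity, depositing us back in case (2). Making this fork precise, and certifying that the iteration never collapses the identity into something trivial or into a pattern incompatible with both (1) and (2), is the technical heart of the argument; the profinite iteration and the limiting argument themselves are routine.
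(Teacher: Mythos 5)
The paper itself does not prove this theorem: it is imported as a black box from an external reference (Thumm 2025, Theorems~A and~B), so your proposal must stand on its own. The opening moves are sound. From $\pv{T}\not\sse\pv{V}$, Reiterman's theorem produces a profinite identity $u\approx v$ valid in $\pv{V}$ but failing in some $\Omega_m(\pv{T})$; and since the nonzero elements of $\Omega_m(\pv{T})$ are exactly the injective words while finite words are isolated points of the free profinite semigroup, the side projecting to a nonzero element really must be a literal injective word. Your dispatch of the subcase where $v$ is likewise an injective word of the same support into case~(2) is correct.

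The ``technical heart'' you acknowledge but do not supply is, however, essentially the whole theorem, and the high-level recipe you give already breaks on a very small instance. Take the single-identity pseudovariety $\pvI{x_1 x_2 \approx x_2 x_1 x_2}$. It does not contain $\pv{T}$, since in $\Omega_2(\pv{T})$ one has $s_1 s_2 \neq 0 = s_2 s_1 s_2$; and it contains $\pv{RRB}$ (there $x_2 x_1 x_2 = x_1 x_2$ by $pqp\approx qp$), so it fails \emph{every} nontrivial permutation identity, because the free pro-$\pv{RRB}$ semigroup separates all injective words of a given length. By the theorem it must therefore satisfy some identity of shape~(1). But the right-hand side $v = x_2 x_1 x_2$ repeats $x_2$ at the two ends of the word, not around an interior block of the left-hand side, and the iteration you describe~--~substituting $u$ back into itself~--~yields $x_1 x_2 \approx x_2^{\omega}\, x_1 x_2$, an \emph{absorption} identity in which the $\omega$-power sits as a new prefix to the left of $x_1 x_2$ rather than replacing a consecutive inner factor by its $(\omega+1)$-st power. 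This is not of shape~(1) for any $n, i, j$, and your outline offers no mechanism to transform it into one. Worse, the fallback you name for obstructed alignment~--~that the obstruction ``constitutes a nontrivial permutation identity''~--~is simply false here, since $\pvI{x_1 x_2 \approx x_2 x_1 x_2}$ admits none. Thus the case analysis has a hole exactly where the theorem's difficulty lies: for a general non-word right-hand side, you must decide whether the forced repetition can be steered around an interior block of the injective side, giving~(1), or instead forces a genuine transposition, giving~(2). That dichotomy is a real structural statement and is not obtained by substitution and passage to profinite limits alone; establishing it is precisely what Thumm's cited result accomplishes.
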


Notably, for monoids all identities in the first item of \cref{thm:excluding-T} are equivalent to the defining identity $x \approx x^{\omega + 1}$ of completely regular monoids, and all identities in the second item are equivalent to the defining identity $xy \approx yx$ of commutative monoids.
Additionally excluding $\pv{LRB}$ and $\pv{RRB}$ narrows the first item to the pseudovarieties of Clifford monoids by Rasin's result~\cite[Proposition~4]{Rasin81}.
In this way, we obtain an alternative proof of Fleischer's classification of pseudovarieties of monoids affording efficient compression \cite[Theorem~4.13]{Fleischer19diss}.

For semigroups, we will also need the following observation, which is a direct consequence of Volkov's characterization \cite[Theorem~1.1]{Volkov00} (see also \cite[Lemma~18]{Thumm2025}).

\begin{lemma}\label{lem:cregular_subsemigroup}
  Let $\pv{V}$ be a pseudovariety with $\pv{T} \not\sse \pv{V}$. 
  For every semigroup $S \in \pv{V}$, the set of its completely regular elements $I(S) = \{ s \in S : s = s^{\omega + 1}\}$ is a subsemigroup of $S$.
\end{lemma}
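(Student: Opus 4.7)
The plan is to reduce the problem to the structural dichotomy of \cref{thm:excluding-T}, which asserts that if $\pv{T} \not\sse \pv{V}$, then $\pv{V}$ is either \emph{almost completely regular}, satisfying an identity of the form $x_1 \cdots x_n \approx x_1 \cdots x_{i-1} (x_i \cdots x_j)^{\omega+1} x_{j+1} \cdots x_n$, or \emph{permutative}, satisfying a nontrivial permutation identity. I would then show that $I(S)$ is closed under multiplication in each of these two cases separately.

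In the almost completely regular case, I would fix $s, t \in I(S)$, let $e = s^\omega$ and $f = t^\omega$ be the associated group identities (so that $se = es = s$ and $tf = ft = t$), and substitute into the governing identity in a way that exhibits $st$ inside the $(\omega+1)$-power block $(x_i \cdots x_j)^{\omega+1}$ on the right-hand side, with the outer contextual factors designed to collapse on the left. Verifying that the resulting equation forces $(st)^{\omega+1} = st$ would then show $st \in I(S)$. In the permutative case, I would instead invoke the Putcha-Yaqub identity $u_1 \cdots u_k xy v_1 \cdots v_k \approx u_1 \cdots u_k yx v_1 \cdots v_k$ (already employed in \cref{pro:permutative}), which permits enough commutation in products that, starting from $s = s^{\omega+1}$ and $t = t^{\omega+1}$, one can exhibit $st$ as being $\mathcal{H}$-related to $(st)^\omega$ and hence lying in a subgroup.

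The main obstacle would be in the almost completely regular case, since the substitution depends on the specific parameters $n, i, j$ of the identity, and verifying that the contextual factors collapse to the desired form is the most delicate combinatorial step. For this reason, I would ultimately defer the combinatorial verification to Volkov's characterization \cite[Theorem~1.1]{Volkov00}, which classifies precisely those pseudovarieties in which $I(S)$ is always a subsemigroup as the ones excluding $\pv{T}$; alternatively, the elementary proof given in \cite[Lemma~18]{Thumm2025} already carries out this reduction and can be cited directly.
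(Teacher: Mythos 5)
Your proposal ultimately converges to the paper's own argument: the paper proves this lemma simply by citing Volkov's characterization (Theorem~1.1 of the cited Volkov reference) together with Lemma~18 of the Thumm reference, which is precisely what you fall back to after sketching the detour through the dichotomy of \cref{thm:excluding-T}. The preliminary case analysis you outline is a less direct route (and, as you yourself note, the almost-completely-regular case is combinatorially delicate to carry out by hand); skipping it and invoking Volkov directly, as the paper does, is the cleaner choice.
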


\begin{proof}[Proof of \cref{thm:main}]
  \Cref{lem:obstructions} shows that $(1) \Rightarrow (2)$.
  Since the addendum in item $(3)$ follows from Babai and Szemer\'edi's Reachability Lemma, we have that $(3) \Rightarrow (1)$ by definition.

  \smallskip

  Let us thus assume that $(2)$ holds. 
  Recall from \cref{thm:excluding-T} that $\pv{V}$ satisfies the identity 
  \[
    x_1 \cdots x_n \approx x_1 \cdots x_{i-1} (x_i \cdots x_j)^{\omega + 1} x_{j+1} \cdots x_n
  \]
  for some parameters $1 \leq i \leq j \leq n$, or is permutative otherwise.
  We may assume the former, since otherwise $C^2_{\pv{V}}(N) \in \mathcal{O}(\log N)$ holds by \cref{pro:permutative} and this clearly implies~$(3)$.

  Under this assumption, we have $\pv{V} \sse \pvI{xyz \approx xy^{\omega+1}z} \pvM \pv{N}_k$ for a sufficiently large $k \geq 1$.
  Let $\pv{V}' = \pv{V} \cap \pvI{xyz \approx xy^{\omega+1}z}$, and note that $\pv{V}' \sse \pv{V}$ and $\pv{V} \sse \pv{V}' \pvM \pv{N}_k$.
  Thus, $\pv{V}'$ also satisfies $(2)$ and, in light of \cref{lem:nilpotent}, it suffices to prove that, for all $2 \leq w \leq \infty$, 
  \[
    C^{w+2}_{\pv{V}'}(N) \in \mathcal{O}(C^w_{\pv{H}}(N) + \log N)
    \quad\text{and}\quad
    C^{w+1}_{\pv{V}'}(N) \in \mathcal{O}(C^w_{\pv{H}}(N) \cdot \log N).
  \]

  Consider a semigroup $S \in \pv{V}'$, a generating set $\Sigma \subseteq S$, and some element $t \in S$.
  Without loss of generality, the element $t$ is not a product of generators with two or fewer factors.
  Under these assumptions, the element $t$ can be written as $t = u\, s_1 \cdots s_n\, v$ with $u, s_1, \dots, s_n, v \in \Sigma$.

  For $1 \leq i \leq n$, let $\tilde s_i = s_i^{\omega + 1}$ and note that $\tilde s_i \in I(S)$.
  Further, let $\tilde S \leq S$ be the subsemigroup generated by the set $\tilde\Sigma = \{ \tilde s_1, \dots, \tilde s_n \}$.
  Since $\tilde \Sigma \sse I(S)$ and the latter is a subsemigroup of $S$ by \cref{lem:cregular_subsemigroup}, we in fact have $\tilde S \leq I(S) \leq S$ and, therefore, $\tilde S \in \pv{V}' \cap \pv{CR}$.

  By \cref{thm:main-cregular}, $\tilde S \in \pv{G} \pvM \pv{NB}$.
  Moreover, since $\pv{V'} \cap \pv{G} \sse \pv{H}$, we have that $\tilde S \in \pv{H} \pvM \pv{NB}$.
  By \cref{pro:normal-bands-of-groups}, for every $2 \leq w \leq \infty$, there is a straight-line program $\smash{\tilde{\mathcal{A}}}$ computing $\tilde s \in \tilde S$ over $\tilde \Sigma \sse \tilde S$ with $w(\smash{\tilde{\mathcal{A}}}) = w + 2$ (or $w + 1$) and $\ell(\smash{\tilde{\mathcal{A}}}) \in \mathcal{O}(C^w_{\pv{H}}(N) + \log N)$ (resp.\ $\mathcal{O}(C^w_{\pv{H}}(N) \cdot \log N)$) where $N = \lvert\tilde S\rvert \leq \abs{S}$.
  We modify the straight-line program $\smash{\tilde{\mathcal{A}}}$ by replacing every instruction $r_k \gets \tilde s_i$ with $\tilde s_i \in \tilde \Sigma$ by the instruction $r_k \gets s_i$ where $s_i \in \Sigma$ with $s_i^{\omega + 1} = \tilde s_i$. 
  (In case $s_i \neq s_{i'}$ satisfy $s_i^{\omega + 1} = s_{i'}^{\omega + 1}$, we choose arbitrarily between $s_i$ and $s_{i'}$.)
  The result of this modification is a straight-line program $\mathcal{A}$ over $\Sigma\sse S$ of the same length and width as $\smash{\tilde{\mathcal{A}}}$.
  Let $s \in S$ be the element computed by $\mathcal{A}$ corresponding to $\tilde s \in \tilde S$.
  We claim that $t = u \, s \, v$.

  To see this claim to be true, note that there is a word $\rho(x_1, \dots, x_n)$ over $\{ x_1, \dots, x_n \}$ that evaluates to $\rho(s_1, \dots, s_n) = s$ and $\rho(\tilde s_1, \dots, \tilde s_n) = \tilde s$.
  Due to the identity $xyz \approx xy^{\omega+1} z$, 
  \[
    t = u \, s_1 \cdots s_n \, v = u \, \tilde s_1 \cdots \tilde s_n \, v = u \, \tilde s \, v = u \, \rho(\tilde s_1, \dots, \tilde s_n) \, v = u \, \rho(s_1, \dots, s_n) \, v = u \, s \, v
  \] 
  as required.
  Since, moreover, $t = u\, s \, v$ can be computed from $s$ using no additional registers, since $w(\mathcal{A}) \geq 2$, and at most four additional instructions, this completes the proof.
\end{proof}

\begin{remark}
  By reusing one of the additional registers between the constructions in \cref{lem:nilpotent} and \cref{pro:normal-bands-of-groups}, the above bounds can be slightly improved to 
    \vspace{-1ex}
    \[
      C^{w+2}_{\pv{V}}(N) \in \mathcal{O}(C^w_{\pv{H}}(N) + \log N)
      \quad\text{and}\quad
      C^{w+1}_{\pv{V}}(N) \in \mathcal{O}(C^w_{\pv{H}}(N) \cdot \log N).
    \]
\end{remark}

Incorporating this improvement, we obtain the following due to \cref{cor:normal-bands-of-solvable-groups}.

\begin{corollary}\label{cor:solvable_efficient_compression}
  Let $\pv{V}$ be a pseudovariety, $\pv{LRB}, \pv{RRB}, \pv{T} \not\sse \pv{V}$, and $\pv{V} \cap \pv{G} \sse \Gsol$.
  Then 
  \vspace{-1ex}
  \[
    C^\infty_{\pv{V}}(N) \in \mathcal{O}(\log N)
    \quad\text{and}\quad
    C^6_{\pv{V}}(N) \in \mathcal{O}(\log^3 N),\;
    C^5_{\pv{V}}(N) \in \mathcal{O}(\log^4 N).
  \]
\end{corollary}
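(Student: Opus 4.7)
My plan is to follow the reduction established in the proof of Theorem~\ref{thm:main} and specialize the final step to the solvable setting. Under the hypotheses $\pv{LRB}, \pv{RRB}, \pv{T} \not\sse \pv{V}$, that proof shows that, up to a constant number of additional instructions and registers, the computation of an arbitrary element $t \in S$ with $S \in \pv{V}$ reduces to computing a suitable element $\tilde s$ in a subsemigroup $\tilde S \leq I(S)$ that is a normal band of groups, where the groups involved belong to $\pv{V} \cap \pv{G}$. Using the additional hypothesis $\pv{V} \cap \pv{G} \sse \Gsol$, the subsemigroup $\tilde S$ therefore lies in $\Gsol \pvM \pv{NB}$, so Corollary~\ref{cor:normal-bands-of-solvable-groups} is directly applicable to it.

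Equivalently, one can take the improved bounds from the remark following Theorem~\ref{thm:main},
\[
  C^{w+2}_{\pv{V}}(N) \in \mathcal{O}(C^w_{\pv{H}}(N) + \log N)
  \quad\text{and}\quad
  C^{w+1}_{\pv{V}}(N) \in \mathcal{O}(C^w_{\pv{H}}(N) \cdot \log N),
\]
where $\pv{H} = \pv{V} \cap \pv{G}$, and feed in the solvable-group bounds $C^\infty_{\pv{H}}(N) \in \mathcal{O}(\log N)$ and $C^4_{\pv{H}}(N) \in \mathcal{O}(\log^3 N)$ supplied by Theorem~\ref{thm:solvable-groups}. Substituting accordingly yields each of the three claimed bounds: $w = \infty$ in the first inequality gives $C^\infty_{\pv{V}}(N) \in \mathcal{O}(\log N)$; $w = 4$ in the first gives $C^6_{\pv{V}}(N) \in \mathcal{O}(\log^3 N)$; and $w = 4$ in the second gives $C^5_{\pv{V}}(N) \in \mathcal{O}(\log^4 N)$.

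Since the corollary is obtained by direct substitution into previously established results, there is no real obstacle. The only issue worth double-checking is the width accounting, namely that the $+2$ and $+1$ increments in the improved bounds correctly match the width $4$ provided by Theorem~\ref{thm:solvable-groups} to produce the widths $6$ and $5$ advertised in the statement.
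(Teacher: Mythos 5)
Your proposal is correct and follows essentially the same route as the paper: the paper obtains the corollary by combining the remark after Theorem~\ref{thm:main} with Corollary~\ref{cor:normal-bands-of-solvable-groups} (which itself is just Theorem~\ref{thm:solvable-groups} fed through Proposition~\ref{pro:normal-bands-of-groups}), which is exactly the substitution you carry out by plugging $C^\infty_{\pv{H}}(N) \in \mathcal{O}(\log N)$ and $C^4_{\pv{H}}(N) \in \mathcal{O}(\log^3 N)$ into the remark's improved bounds at $w=\infty$ and $w=4$. Your width accounting ($4+2=6$ for the additive bound and $4+1=5$ for the multiplicative one) is correct and matches the statement.
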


\section{The Membership Problem}\label{sec:membership}

The \emph{membership problem} is one of the most fundamental decision problems for algebraic structures.
In the case of semigroups, this problem is given as follows.
\smallskip
\begin{decproblem}
  \iitem A semigroup $S$, a subset $\Sigma \sse S$, and an element $t \in S$.
  \qitem Is $t$ a member of the subsemigroup $\gen[S]{\Sigma} \leq S$?
\end{decproblem}

Here, we consider a restricted variant \dMemb{\pv{V}} with the additional promise that the subsemigroup\footnote{
Be aware that in some related work, including \cite{Fleischer19diss,Fleischer22}, not only the subsemigroup $\gen{\Sigma}$ is restricted to the pseudovariety $\pv{V}$, but also the surrounding semigroup $S$.
As we require a weaker promise, we obtain a stronger algorithmic result (\cref{cor:main_membership_restated}) than with the formulation from \cite{Fleischer19diss,Fleischer22}.}
$\gen[S]{\Sigma} \leq S$ belongs to some fixed pseudovariety $\pv{V}$.
This includes the general problem as \dMemb{\pv{S}} and allows for a more fine-grained analysis of the problem's complexity.

There are different ways to represent the semigroup $S$ and its elements as part of the input. 
Our main focus is on the \emph{Cayley table model} where $S$ is given as multiplication table and its elements as indices into its rows and columns; in this case, we denote the membership problem by \dMemb[CT]{\pv{V}}.
Other forms of input are considered at the end of this section.

Note that, in general, the membership problem \dMemb[CT]{\pv{S}} is \NL-complete \cite{JonesLL76}; in fact, this already holds for \dMemb[CT]{\pv{T}} \cite[Theorem 5.6]{Fleischer19diss}.
On the other hand, our results~--~see \cref{cor:main_membership_restated} below~--~imply that, for any pseudovariety $\pv{V}$ with $\pv{LRB},\pv{RRB}, \pv{T} \not\sse \pv{V}$, the corresponding membership problem \dMemb[CT]{\pv{V}} cannot even be hard under \ACz-reductions for any complexity class containing \textsc{parity}~-- such as \NL.

The following observation shows that efficient compression via straight-line programs gives rise to nontrivial complexity upper bounds for the membership problem.
Both items can be found in the dissertation of Fleischer~\cite[Corollary~5.2, Corollary~5.3]{Fleischer19diss}.
However, using the techniques by Collins, Grochow, Levet, and the second author~\cite{CollinsGLW25} (where this fact has also been used implicitly), we can give a simpler proof of the second item.

\begin{lemma}[Fleischer]\label{lem:complexity}
  Let $\pv{V}$ be a pseudovariety, and let $2 \leq w < \infty$.
  \begin{bracketenumerate}
    \item If $C^\infty_{\pv{V}}(N) \in \mathcal{O}(\polylog N)$, then \dMemb[CT]{\pv{V}} is in $\NPOLYLOGTIME \sse \qACz$.
    \item If $C^w_{\pv{V}}(N) \in \mathcal{O}(\polylog N)$, then \dMemb[CT]{\pv{V}} is in $\NTISP(\polylog n, \log n) \sse \FOLL$.
  \end{bracketenumerate}
\end{lemma}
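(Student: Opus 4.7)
The plan, for both items, is to nondeterministically guess a straight-line program $\mathcal{A}$ that computes $t$ over $\Sigma$, and to verify it by simulation using random access to the Cayley table.

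For item~(1), semigroup elements are encoded as $O(\log n)$-bit indices into the Cayley table, so each instruction of $\mathcal{A}$ has a description of size $O(\log n)$ bits. By the hypothesis $C^\infty_{\pv{V}}(N) \in \mathcal{O}(\polylog N)$ together with the promise $\gen[S]{\Sigma} \in \pv{V}$, whenever $t \in \gen[S]{\Sigma}$ there exists such an $\mathcal{A}$ of length $\polylog n$, so the entire guess has $\polylog n$ bits. Verification then simulates $\mathcal{A}$ instruction by instruction: each multiplication $r_k \gets r_i \cdot r_j$ is performed by a single random-access query to the Cayley table in time $O(\log n)$, and the machine accepts iff some register finally holds $t$. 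The total running time is $\polylog n$, establishing membership in $\NPOLYLOGTIME$. The further inclusion $\NPOLYLOGTIME \sse \qACz$ is standard, see~\cite{Vollmer99}.

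For item~(2), the same strategy applies, but the bounded width $w(\mathcal{A}) \leq w$ guarantees that at most $w$ semigroup elements are stored simultaneously, each of size $O(\log n)$, giving working space $O(\log n)$. As long as the guessed program is consumed from the guess tape in a streaming fashion (one instruction at a time, rather than being materialised on the work tape), the simulation fits into $\NTISP(\polylog n, \log n)$.

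The main technical step is then the inclusion $\NTISP(\polylog n, \log n) \sse \FOLL$. This follows by the standard configuration-squaring argument: the configuration graph of the simulating machine has $n^{O(1)}$ vertices, and reachability within $\polylog n$ transitions is decided by iterating the one-step relation and squaring the resulting binary relation $O(\log \log n)$ times, each squaring step being an $\ACz$ operation of polynomial size. The resulting $\DTIME(\log n)$-uniform circuit family has polynomial size and depth $O(\log \log n)$, which is precisely $\FOLL$. As noted in the excerpt, this derivation is implicit in~\cite{CollinsGLW25} and is notably shorter than Fleischer's original argument, which routed through Ruzzo's characterisation of alternating classes.
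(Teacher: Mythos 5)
Your proof is correct and follows the paper's own approach: nondeterministically guess a short straight-line program over $\Sigma$ computing $t$ and simulate it via random access to the Cayley table, noting that bounded width keeps the working space logarithmic. The only difference is that you spell out two details the paper leaves implicit — guessing instructions one at a time rather than materialising the program, and the configuration-squaring argument behind $\NTISP(\polylog n, \log n) \sse \FOLL$, which the paper simply cites from~\cite{CollinsGLW25}.
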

\begin{proof}
  To certify membership of an element $t \in S$ in a subsemigroup $\gen[S]{\Sigma} \leq S$, we can use a suitable straight-line program $\mathcal{A}$ over $\Sigma$ computing $t$.
  We guess and execute $\mathcal{A}$, instruction by instruction, on the random access machine and accept once some register is assigned $t$. 
  
  For the bounded-width case, observe that we only need to store a bounded number of elements, which only requires logarithmic space. 
  In this case, the membership problem can thus be solved in $\NTISP(\polylog n, \log n)$, which is a subset of $\FOLL$ \cite[Lemma 2.6]{CollinsGLW25}.
\end{proof}

Using this lemma, we obtain the following from \cref{thm:main} (thus, proving \cref{cor:main_membership}).

\begin{corollary}\label{cor:main_membership_restated}
	Let $\pv{V}$ be a pseudovariety. The following are equivalent.
	\begin{bracketenumerate}
		\item\label{cor_efficient_compression} The pseudovariety $\pv{V}$ affords efficient compression.
    \item The pseudovariety $\pv{V}$ contains neither $\pv{LRB}$, $\pv{RRB}$, nor $\pv{T}$.
		\item\label{cor_mem_npolylog} The membership problem \dMemb[CT]{\pv{V}} is in \NPOLYLOGTIME.
	\end{bracketenumerate}
	 Furthermore, if $\pv{V}$ contains neither $\pv{LRB}$, $\pv{RRB}$, $\pv{T}$, nor any nonsolvable group, then the membership problem \dMemb[CT]{\pv{V}} is in $\NTISP(\polylog n, \log n)$ and, hence, in $\FOLL$. 
\end{corollary}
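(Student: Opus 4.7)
The plan is to close a cycle of implications $(1) \Leftrightarrow (2)$, $(1) \Rightarrow (3)$, and the contrapositive of $(3) \Rightarrow (2)$, and then to deduce the furthermore part by combining a strengthened compression bound with the bounded-width part of \cref{lem:complexity}.

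The equivalence $(1) \Leftrightarrow (2)$ is immediate from \cref{thm:main}, while the implication $(1) \Rightarrow (3)$ follows at once from \cref{lem:complexity}(1): polylogarithmic-length straight-line programs translate directly to \NPOLYLOGTIME membership algorithms. To close the cycle, I would prove the contrapositive of $(3) \Rightarrow (2)$. Suppose some obstruction $\pv{W} \in \{\pv{LRB}, \pv{RRB}, \pv{T}\}$ is contained in $\pv{V}$. Since pseudovarieties are closed under subsemigroups and homomorphic images, every instance of \dMemb[CT]{\pv{W}} is also an instance of \dMemb[CT]{\pv{V}}, giving a trivial identity reduction. Hence it suffices to show $\dMemb[CT]{\pv{W}} \notin \NPOLYLOGTIME$ for each of the three obstructions. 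Since $\NPOLYLOGTIME \sse \qACz$ and $\textsc{parity} \notin \qACz$ by Håstad's classical circuit lower bound, exhibiting an \ACz-reduction from \textsc{parity} to each \dMemb[CT]{\pv{W}} suffices.

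For $\pv{T}$, Fleischer's \NL-completeness of \dMemb[CT]{\pv{T}} (cited in the introduction from \cite[Theorem 5.6]{Fleischer19diss}) does the job, as standard \NL-completeness proofs transfer \textsc{parity}-hardness under \ACz-reductions. For $\pv{LRB}$ and $\pv{RRB}$, the chain constructions from the proof of \cref{lem:obstructions}~--~where every generator must appear in any representation of the target element~--~can be adapted to encode directed graph reachability, yielding the required \NL-hardness under \ACz-reductions. Finally, for the furthermore part, the stronger hypothesis that $\pv{V}$ contains no nonsolvable group (so $\pv{V} \cap \pv{G} \sse \Gsol$) permits invoking \cref{cor:solvable_efficient_compression}, which provides straight-line programs of bounded width and polylogarithmic length; applying \cref{lem:complexity}(2) then places $\dMemb[CT]{\pv{V}}$ in $\NTISP(\polylog n, \log n) \sse \FOLL$.

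The main obstacle I expect is the \textsc{parity}-hardness argument for \dMemb[CT]{\pv{LRB}} and \dMemb[CT]{\pv{RRB}}: whereas \NL-completeness for $\pv{T}$ is explicit in the literature, the analogous \ACz-hardness statements for left- and right-regular bands do not appear as cited results and must either be extracted carefully from the chain obstruction constructions of \cref{lem:obstructions} (by routing reachability queries through the congruence presentations used there) or cited from a separate source. A secondary technical point is ensuring that each such reduction is genuinely \ACz-computable, rather than only \LOGSPACE-computable, so that the composition with the putative \qACz algorithm remains inside \qACz and the contradiction with Håstad's bound is valid.
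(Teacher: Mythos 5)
Your treatment of $(1)\Leftrightarrow(2)$ via \cref{thm:main}, of $(1)\Rightarrow(3)$ via \cref{lem:complexity}(1), and of the \FOLL\ addendum via \cref{cor:solvable_efficient_compression} and \cref{lem:complexity}(2) all agree with the paper. However, for $(3)\Rightarrow(2)$ you take a detour through circuit lower bounds that is both unnecessary and, more importantly, leaves a genuine gap. You reduce the claim to showing that $\dMemb[CT]{\pv{LRB}}$, $\dMemb[CT]{\pv{RRB}}$, and $\dMemb[CT]{\pv{T}}$ are each \textsc{parity}-hard under \ACz-reductions. For $\pv{T}$ you can lean on the cited \NL-completeness, but for $\pv{LRB}$ and $\pv{RRB}$ you offer only the suggestion that the chain construction of \cref{lem:obstructions} ``can be adapted to encode directed graph reachability.'' The semigroup $\Omega_n(\pv{LRB})/\theta$ constructed there is essentially a linear chain of generators; it is far from obvious that one can encode general reachability, or even \textsc{parity}, into \dMemb[CT]{\pv{LRB}} by an \ACz-computable reduction, and you give no construction. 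You are, in effect, proposing to establish a new lower bound for which neither the paper nor your write-up supplies a proof.

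The paper closes $(3)\Rightarrow(2)$ by a much simpler, direct argument that never touches circuit lower bounds. An \NPOLYLOGTIME\ random-access machine on any single computation path makes only $\polylog n$ input queries, hence inspects only $\polylog n$ entries of the Cayley table; an accepting path can therefore only certify $t \in \gen[S]{\Sigma'}$ for some subset $\Sigma' \sse \Sigma$ of polylogarithmic size. But \cref{lem:obstructions} gives, for each $\pv{W} \in \{\pv{LRB},\pv{RRB},\pv{T}\}$, semigroups $S \in \pv{W}$ with $\abs{S}\leq N$, a generating set $\Sigma$ of size $\Omega(\sqrt{N})$, and a target $t$ not generated by any proper subset of $\Sigma$. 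Thus no polylog-size $\Sigma'$ exists, and whenever $\pv{W}\sse\pv{V}$ this contradicts $\dMemb[CT]{\pv{V}}\in\NPOLYLOGTIME$ for large $N$. This argument needs only the combinatorial content of \cref{lem:obstructions} and avoids invoking H{\aa}stad's bound, \NL-hardness, or any \textsc{parity}-hardness reduction at all. Your plan could only be rescued by supplying the missing hardness proofs for $\pv{LRB}$ and $\pv{RRB}$, which would be substantially more work than the paper's route.
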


Note that in some cases even smaller bounds are known: if $\pv{V} \sse \pvL \pv{I}_k$ or $\pv{V} \sse\pv{NB}$, then the membership problem is in \ACz \cite[Proposition 4.5 and Corollary 5.2, Theorem 5.12]{Fleischer19diss}.
\begin{proof}
	The first two items are equivalent by \cref{thm:main} and imply the third by \cref{lem:complexity}.

  	On the other hand, if the problem \dMemb[CT]{\pv{V}} is in \NPOLYLOGTIME, then for any fixed semigroup $S \in \pv{V}$ and generating set $\Sigma \sse S$, a target element $t \in S$ can only depend on a polylogarithmic number of generators as in one computation path the algorithm cannot access more elements during its running time~-- meaning that $t \in \gen[S]{\Sigma'}$ for some $\Sigma' \sse \Sigma$ of polylogarithmic size.
  	In turn, this implies that $\pv{LRB}$, $\pv{RRB}$, $\pv{T} \not \sse \pv{V}$ by \cref{lem:obstructions}.
	
  	Finally, if $\pv{V}$ contains neither $\pv{LRB}$, $\pv{RRB}$, $\pv{T}$, nor any nonsolvable group, then the problem \dMemb[CT]{\pv{V}} is in $\NTISP(\polylog n, \log n) \sse \FOLL$ by \cref{cor:solvable_efficient_compression} and \cref{lem:complexity}.
\end{proof}

\Cref{cor:main_membership_restated} confirms, in particular, the conjecture by Barrington, Kadau, Lange, and McKenzie \cite{BarringtonKLM01} that solvable groups have their membership problem in \FOLL.
Moreover, it completely reduces Fleischer's question~\cite{Fleischer22} whether all classes of semigroups that afford efficient compression have their membership problem in \FOLL to the case of groups.

\medskip

For membership in the \emph{transformation model}, the semigroup $S$ is the semigroup $\mathcal{T}_X$ of all maps from some finite set $X$ to itself, with elements given in a point-wise representation.

\begin{corollary}\label{cor:membership_transformation}
	Let $\pv{V}$ be a pseudovariety of semigroups with $\pv{LRB}$, $\pv{RRB}$, $\pv{T} \not \sse \pv{V}$.
  Then the membership problem for $\pv{V}$-semigroups in the transformation model is in \NP.
\end{corollary}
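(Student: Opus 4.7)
The plan is to leverage our main theorem (\cref{thm:main}): every pseudovariety $\pv{V}$ with $\pv{LRB}, \pv{RRB}, \pv{T} \not\sse \pv{V}$ affords efficient compression, and in fact admits straight-line programs of length $\mathcal{O}(\smash{\log}^2 N)$ in the size $N$ of the semigroup. Consider an instance given by a base set $X$ with $\abs{X} = n$, generators $\Sigma \sse \mathcal{T}_X$ (each presented as a table of $n$ values), and a target transformation $t \in \mathcal{T}_X$, subject to the promise that $S := \gen[\mathcal{T}_X]{\Sigma} \in \pv{V}$. The trivial bound $\abs{S} \leq n^n$ yields $\log \abs{S} \leq n \log n$, so, whenever $t \in S$, the main theorem provides a straight-line program over $\Sigma$ that computes $t$ and has length $\mathcal{O}(n^2 \log^2 n)$, which is polynomial in the input size.

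The NP verifier is then straightforward: nondeterministically guess such a straight-line program $\mathcal{A}$, instruction by instruction, where each instruction has polynomial-size description (a register index together with either a generator index or a pair of register indices). Since the width of $\mathcal{A}$ is at most its length, at most polynomially many registers are in use. The verifier then deterministically simulates $\mathcal{A}$, storing the contents of every register as an element of $\mathcal{T}_X$, that is, as a table of $n$ entries of $\lceil \log n \rceil$ bits. Assignment instructions simply copy the generator table, while product instructions compose two transformations in time $\mathcal{O}(n \log n)$. After execution, the verifier accepts if and only if some register holds the target $t$, which can be checked in time $\mathcal{O}(n \log n)$. The total running time is polynomial.

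Correctness follows from the two directions: if $t \in S$, then \cref{thm:main} guarantees the existence of an accepting certificate of polynomial size; and conversely, if any guessed straight-line program evaluates to $t$, then $t$ is genuinely a product of generators and therefore lies in $S$.

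There is no real obstacle here. The only point worth emphasising is that the promise concerns $\gen[\mathcal{T}_X]{\Sigma}$ rather than $\mathcal{T}_X$ itself: our compression bounds apply to the generated subsemigroup, whose cardinality we bound generously by $n^n$. This is exactly what is needed to translate the polylogarithmic bound on straight-line program length~--~measured in the size of the algebraic structure~--~into a polynomial bound in the bit length of the transformation-model input.
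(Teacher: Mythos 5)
Your proof is correct and fills in the natural argument; the paper does not spell out a proof for this corollary, presenting it as an immediate consequence of the main theorem, and your write-up is precisely the intended reasoning. The key observation — that $\abs{\gen[\mathcal{T}_X]{\Sigma}} \le n^n$, so the $\mathcal{O}(\log^2 N)$ length bound from item~(3) of \cref{thm:main} translates to a polynomial-size certificate in the bit length of the transformation-model input — is exactly the point, and the poly-time verification by direct simulation and the soundness direction are straightforward as you describe.
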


Note that this corollary is far from being optimal.
Indeed, it should be rather easy to show that for $\pv{G} \pvM \pv{NB}$ the membership problem in the transformation model is in \NC (building on \cite{BabaiLS87}).
Moreover, Fleischer, Stober, and the authors already showed that this holds for Clifford semigroups and, more generally, for strict inverse semigroups \cite[Theorem B]{FleischerSTW25}. 

On the other hand, \cref{cor:membership_transformation} has some interesting consequences to the minimum generating set problem (given a semigroup and a number $k$, decide whether the semigroup can be generated by at most $k$ elements), the problem of solving equations, and the isomorphism problem.
Indeed, if $\pv{LRB}$, $\pv{RRB}$, $\pv{T} \not \sse \pv{V}$, then the former two problems can be solved in \NP for semigroups from $\pv{V}$ by simply guessing a suitable generating set (resp.\ a solution to the equations) and then checking its validity using the algorithm for the membership problem.
Moreover, using a similar approach the isomorphism problem can be solved in $\SigkPTIME{2}$, that is, in the second level of the polynomial-time hierarchy.

\section{Conclusion}

In this work, we classified those pseudovarieties of semigroups that afford efficient compression via straight-line programs and also considered the case of efficient compression via bounded-width straight-line programs.
In the absence of nonsolvable groups, we obtained \FOLL algorithms for the membership problem for such pseudovarieties, thereby solving open problems from \cite{Fleischer22} and \cite{BarringtonKLM01}.
We conclude with the following questions.

\begin{question}
  Suppose that the pseudovariety $\pv{V}$ affords efficient compression.
  \begin{itemize}
    \item Does $\pv{V}$ admit straight-line program of \emph{logarithmic} length and unbounded width?
    \item Does $\pv{V}$ admit straight-line program of polylogarithmic length and \emph{bounded} width?
    \item Does $\pv{V}$ admit straight-line program of \emph{logarithmic} length and \emph{bounded} width?
  \end{itemize}
\end{question}

We note that \cref{thm:main} reduces all three of these questions to pseudovarieties of groups.
The second one is particularly interesting, as an affirmative answer would imply that all such pseudovarieties admit \FOLL algorithms for their membership problem.

\begin{question}
  Which pseudovarieties have their membership problem in \ACz?
\end{question}

We suspect that some of our results and techniques (in particular, \cref{lem:nilpotent} and \cref{thm:bounded-diameter}) might be useful in making progress towards resolving this question.
In contrast, addressing the following question appears to be a significantly more ambitious endeavor.

\begin{question}
  Does the membership problem exhibit a \qACz vs. \NL-complete dichotomy?
\end{question}

Finally, we want to ask to what extent the methods of this work can be transferred to study compression in other algebraic structures such as rings and quasigroups.

\bibliography{references}

\end{document}